\DeclareFontFamily{U}{mathb}{\hyphenchar\font45}
\DeclareFontShape{U}{mathb}{m}{n}{
      <5> <6> <7> <8> <9> <10> gen * mathb
      <10.95> mathb10 <12> <14.4> <17.28> <20.74> <24.88> mathb12
      }{}
\DeclareSymbolFont{mathb}{U}{mathb}{m}{n}
\let\dot\relax
\DeclareMathAccent{\dot}{0}{mathb}{"39}
\let\ddot\relax
\DeclareMathAccent{\ddot}{0}{mathb}{"3A}
\let\dddot\relax
\DeclareMathAccent{\dddot}{0}{mathb}{"3B}
\let\ddddot\relax
\DeclareMathAccent{\ddddot}{0}{mathb}{"3C}
\theoremstyle{plain}
\newtheorem*{theorem*}{Theorem}
\newtheorem{theorem}{Theorem}[section]
\newtheorem*{lemma*}{Lemma}
\newtheorem{lemma}[theorem]{Lemma}
\newtheorem*{proposition*}{Proposition}
\newtheorem{proposition}[theorem]{Proposition}
\newtheorem*{corollary*}{Corollary}
\newtheorem{corollary}[theorem]{Corollary}
\newtheorem*{claim*}{Claim}
\newtheorem{claim}{Claim}
\newtheorem*{conjecture*}{Conjecture}
\newtheorem*{question*}{Question}
\theoremstyle{definition}
\newtheorem*{definition*}{Definition}
\newtheorem{definition}[theorem]{Definition}
\newtheorem*{example*}{Example}
\newtheorem{example}[theorem]{Example}
\newtheorem*{algorithm*}{Algorithm}
\newtheorem*{remark*}{Remark}
\newtheorem*{remarks*}{Remarks}
\newtheorem{remark}[theorem]{Remark}
\newtheorem*{convention*}{Convention}
\numberwithin{equation}{section}
\def\al{\alpha}
\def\be{\beta}
\def\ga{\gamma}
\def\de{\delta}
\def\ep{\epsilon}
\def\th{\theta}
\def\la{\lambda}
\def\ta{\tau}
\def\ph{\phi}
\def\ch{\chi}
\def\ps{\psi}
\def\om{\omega}
\def\Ga{\Gamma}
\def\Ph{\Phi}
\def\Ps{\Psi}
\def\N{\mathbb{N}}
\def\R{\mathbb{R}}
\def\cD{\mathcal{D}}
\def\cG{\mathcal{G}}
\def\fX{\mathfrak{X}}
\def\p{\partial}
\def\<{\langle}
\def\>{\rangle}
\renewcommand{\o}{\circ}
\def\Id{\on{Id}}
\def\inv{\on{inv}}
\def\comp{\on{comp}}
\let\on=\operatorname
\newcommand{\sr}[1]%
{\ifmmode{}^\dagger\else${}^\dagger$\fi\ifvmode
\vbox to 0pt{\vss
 \hbox to 0pt{\hskip\hsize\hskip1em
 \vbox{\hsize3cm\raggedright\pretolerance10000
 \noindent #1\hfill}\hss}\vss}\else
 \vadjust{\vbox to0pt{\vss%
 \hbox to 0pt{\hskip\hsize\hskip1em%
 \vbox{\hsize3cm\raggedright\pretolerance10000%
 \noindent #1\hfill}\hss}\vss}}\fi%
}
\def\ev{\on{ev}}
\providecommand{\mapsfrom}{\kern.2em%
\setbox0=\hbox{$\leftarrow$\kern-.10em\rule[0.26mm]{0.1mm}{1.3mm}}\box0%
\kern.3em}
\title[On groups of H\"older diffeomorphisms]
{On groups of H\"older diffeomorphisms\\ and their regularity}
\author[D.N.~Nenning]{David Nicolas Nenning}
\address{D.N.~Nenning: Fakult\"at f\"ur Mathematik, Universit\"at Wien, 
Oskar-Morgenstern-Platz~1, A-1090 Wien, Austria}
\email{david.nicolas.nenning@univie.ac.at}
\author[A.~Rainer]{Armin Rainer}
\address{A.~Rainer: 
Fakult\"at f\"ur Mathematik, Universit\"at Wien, 
Oskar-Morgenstern-Platz~1, A-1090 Wien, Austria}
\email{armin.rainer@univie.ac.at}
\begin{document}
	
\begin{abstract}
	We study the set $\cD^{n,\be}(\R^d)$ 
	of orientation preserving diffeomorphisms of $\R^d$ 
	which differ from the identity by a H\"older $C^{n,\be}_0$-mapping, 
	where $n \in \N_{\ge 1}$ and $\be \in (0,1]$. 
	We show that $\cD^{n,\be}(\R^d)$ forms a group, 
	but left translations in $\cD^{n,\be}(\R^d)$ are in general discontinuous. 
	The groups $\cD^{n,\be-}(\R^d) := \bigcap_{\al < \be} \cD^{n,\al}(\R^d)$ (with its natural Fr\'echet topology) 
	and $\cD^{n,\be+}(\R^d) := \bigcup_{\al > \be} \cD^{n,\al}(\R^d)$ (with its natural inductive locally convex topology)
	however are $C^{0,\om}$ Lie groups for any \emph{slowly vanishing} modulus of continuity $\om$.
	In particular, $\cD^{n,\be-}(\R^d)$ is a topological group and a so-called half-Lie group (with smooth right translations). 
	We prove that the H\"older spaces $C^{n,\be}_0$ are ODE closed, in the sense that pointwise 
	time-dependent $C^{n,\be}_0$-vector fields $u$ have unique flows $\Ph$ in $\cD^{n,\be}(\R^d)$. 
	This includes, in particular, all Bochner integrable functions $u \in L^1([0,1],C^{n,\be}_0(\R^d,\R^d))$. 
	For the latter and $n\ge 2$, we show that the flow map $L^1([0,1],C^{n,\be}_0(\R^d,\R^d)) \to C([0,1],\cD^{n,\al}(\R^d))$, 
	$u \mapsto \Ph$, is 
	continuous (even $C^{0,\be-\al}$), for every $\al < \be$.
	As an application we prove that the corresponding Trouv\'e group $\cG_{n,\be}(\R^d)$ 
	from image analysis
	coincides with the connected component of the identity of $\cD^{n,\be}(\R^d)$.
\end{abstract}

\thanks{Supported by FWF-Project P~26735-N25}
\keywords{H\"older spaces, composition and inversion operators, time-dependent H\"older vector fields and their flow, 
half-Lie groups, ODE closedness, ODE hull}
\subjclass[2010]{
	37C10,  	%Vector fields, flows, ordinary differential equations
	58B10,  	%Differentiability questions
	58B25,  	%Group structures and generalizations on infinite-dimensional manifolds
	58C07,  	%Continuity properties of mappings
	58D05} 	    %Groups of diffeomorphisms and homeomorphisms as manifolds
%\dedicatory{dedicatory}
\date{\today}

\maketitle

%-----------------------------------------------------------------------------------------------------------------------
\section{Introduction}
%-----------------------------------------------------------------------------------------------------------------------

Let $E$ be a Banach space of functions $f : \R^d \to \R^d$ which is continuously embedded in $C^1_0(\R^d,\R^d)$, i.e., 
$C^1$-mappings which vanish together with its first derivative at infinity.
Let $u : [0,1] \times \R^d \to \R^d$ be a \emph{pointwise time-dependent $E$-vector field}, 
i.e., $u(t, \cdot) \in E$ for all $t$,
$u(\cdot, x)$ is measurable for all $x$, and $t \mapsto \|u(t,\cdot)\|_E$ is integrable.
It is well-known 
that the corresponding pointwise flow 
\begin{equation} \label{eq:intro-flow}
	\Ph(t,x) = x + \int_0^t u(s,\Ph(s,x)) \, ds, \quad x \in \R^d,~ t \in [0,1],
\end{equation}
is a $C^1$-diffeomorphism of $\R^d$ at any $t$. The set of all diffeomorphisms $\Ph(1,\cdot)$ at time $1$ which arise 
in this way form a group $\cG_E$, which we call the \emph{Trouv\'e group} of $E$, since this constructions is 
due to Trouv\'e \cite{Trouve95}; details can be found in the book \cite{Younes10}. 

In general, not much is known about the Trouv\'e group. We are especially interested in precise regularity properties 
of its elements. This is intimately related to the question as to whether $E$ is \emph{ODE closed}, i.e., 
$\cG_E \subseteq \Id + E$, and if not, what the \emph{ODE hull} of $E$ is. We define the ODE hull to be the intersection 
of all ODE closed spaces continuously embedded in $C^1_0(\R^d,\R^d)$ and continuously containing $E$, see \Cref{sec:ODEhull}; 
here it is reasonable to allow for 
locally convex spaces (mutatis mutandis) instead of just Banach spaces.

ODE closedness is closely related to stability and continuity or smoothness properties of composition of mappings. 
Indeed, it has been 
widely studied in the context of regular infinite dimensional Lie groups; cf.\ \cite{Glockner16}. 
Our results are not covered by the general theory from \cite{Glockner16}; H\"older (diffeomorphism) groups fail to be Lie groups.

In this paper we explore these questions in the case that $E$ is a global H\"older space 
$C^{n,\be}_0(\R^d,\R^d)$, $n \in \N_{\ge1}$, $\be \in (0,1]$.
In search for an identification of the elements of the corresponding Trouv\'e group 
$\cG_{n,\be}(\R^d)$,
it is natural to look at the set of orientation preserving diffeomorphisms of $\R^d$ which differ from the identity by a 
$C^{n,\be}_0$-mapping, i.e.,
\begin{equation*}
		\mathcal{D}^{n,\beta}(\mathbb{R}^d):= \big\{ \Ph \in \Id+C^{n,\beta}_0(\mathbb{R}^d,\mathbb{R}^d): 
		\det d\Ph(x) > 0 ~\forall x \in \mathbb{R}^d \big\}.
\end{equation*}
We will show that $\mathcal{D}^{n,\beta}(\mathbb{R}^d)$ is a group with respect to composition, but it is not a topological 
group: left translations and inversion are in general not continuous. 
Left translations become continuous if the outer function is slightly more regular: $\ph \mapsto \ps \o (\Id + \ph)$ is 
continuous from $C^{n,\al}_0(\R^d,\R^d) \to C^{n,\al}_0(\R^d,\R^d)$ if $\ps \in C^{n,\be}_0(\R^d,\R^d)$ and $\al < \be$ 
(the same holds if $\ph \in C^{n,1}_0(\R^d,\R^d)$ and $\ps \in C^{n+1,\be}_0(\R^d,\R^d)$). 
Similarly, 
$\Ph \mapsto \Ph^{-1}$ is continuous from $\cD^{n,\be}(\R^d)$ to $\cD^{n,\al}(\R^d)$ if $\al < \be$.
This motivates the definitions
\begin{gather*}
		\mathcal{D}^{n,\beta-}(\mathbb{R}^d):= \big\{ \Ph \in \Id+C^{n,\beta-}_0(\mathbb{R}^d,\mathbb{R}^d): 
		\det d\Ph(x) > 0 ~\forall x \in \mathbb{R}^d \big\},\\
		\mathcal{D}^{n,\beta+}(\mathbb{R}^d):= \big\{ \Ph \in \Id+C^{n,\beta+}_0(\mathbb{R}^d,\mathbb{R}^d): 
		\det d\Ph(x) > 0 ~\forall x \in \mathbb{R}^d \big\},
\end{gather*}
where $C^{n,\beta-}_0 := \bigcap \{C^{n,\al}_0 : 0 < \al <\be \}$ and 
$C^{n,\beta+}_0 := \bigcup \{C^{n,\al}_0 : \be < \al <1 \}$, equipped with the natural projective, resp.\ inductive 
locally convex topology. 
We prove that $\mathcal{D}^{n,\beta \pm}(\mathbb{R}^d)$ 
are $C^{0,\om}$ Lie groups (see \Cref{sec:C0omconvenient}) 
for every \emph{slowly vanishing} modulus of continuity $\om$, i.e., 
\[
	\liminf_{t \downarrow 0} \frac{\om(t)}{t^\ga} > 0 \quad \text{ for all } \ga>0.
\]
This regularity cannot be improved; see \Cref{cor:optimal}.
In particular, $\mathcal{D}^{n,\beta -}(\mathbb{R}^d)$
are topological groups (which remains open for $\mathcal{D}^{n,\beta+}(\mathbb{R}^d)$ since the underlying 
locally convex topology and the $c^\infty$-topology fall apart in this case). 
The right translations are bounded affine linear (in the chart representation) and 
hence smooth. Consequently, $\mathcal{D}^{n,\beta-}(\mathbb{R}^d)$ are also \emph{half-Lie groups} as defined in 
\cite{KrieglMichorRainer14a}.

In the second part of the paper we study flows of time-dependent $C^{n,\be}_0$-vector fields. 
Here we distinguish between:
\begin{enumerate}
 	\item \emph{Pointwise time-dependent $C^{n,\be}_0$-vector fields}, i.e., mappings $u : [0,1] \times \R^d \to \R^d$ such that 
 	$u(t, \cdot) \in C^{n,\be}_0(\R^d, \R^d)$ for all $t \in [0,1]$, $u(\cdot,x)$ is measurable for all $x \in\R^d$, and 
 	$t \mapsto \|u(t, \cdot)\|_{n,\be}$ is integrable. (This corresponds to the notion defined at the beginning of the 
 		introduction.)
 	\item \emph{Strong time-dependent $C^{n,\be}_0$-vector fields}, i.e., Bochner integrable functions 
	$u \in L^1([0,1],C^{n,\be}_0(\R^d, \R^d))$.
\end{enumerate}
The latter notion involves strong measurability which entails that the image $u([0,1])$ is 
essentially separable; a non-trivial condition, since the H\"older spaces $C^{n,\be}_0$ are non-separable. 
If $u$ satisfies (2) then $u^\wedge$ satisfies (1), the converse is false.
(For $f \in Z^{X \times Y}$ we consider $f^\vee \in (Z^Y)^X$ defined by $f^\vee(x)(y) = f(x,y)$, and
with $g \in (Z^Y)^X$ we associate $g^\wedge \in Z^{X \times Y}$ with $g^\wedge (x,y) = g(x)(y)$.)

This deficiency has the effect that Carath\'eodory's solution theory for ODEs on Banach spaces which are Bochner 
integrable in time is not well suited for the H\"older space setting. 
Instead we work with pointwise estimates which has the additional benefit 
that our proofs only require the weaker assumptions in (1).

We show that, for all $n\in \N_{\ge 1}$, $\be \in (0,1]$,  
pointwise time-dependent $C^{n,\be}_0$-vector fields $u$ have unique pointwise flows 
$\Ph \in C([0,1],\cD^{n,\be}(\R^d))$; in particular, $C^{n,\be}_0$ is ODE closed 
(although composition in $\cD^{n,\be}$ is not continuous!). 
As a consequence, for $u \in L^1([0,1],C^{n,\be}_0(\R^d,\R^d))$, 
the identity \eqref{eq:intro-flow} lifts to an identity in $\cD^{n,\al}(\R^d)$, for each $\al < \be$ 
(see \Cref{odesolv}):
\[
	\Ph^\vee(t) = \Id + \int_0^t u(s) \o \Ph^\vee(s) \, ds, \quad  t \in [0,1]. 
\]
Furthermore, we identify the corresponding Trouv\'e group:
\begin{equation} \label{eq:intro1}
	\cG_{n,\be}(\R^d) = \cD^{n,\be}(\R^d)_0, 
\end{equation}
where $\cD^{n,\be}(\R^n)_0$ denotes the connected component of the identity in $\cD^{n,\be}(\R^n)$.
Thus there seems to be no natural topology on $\cG_{n,\be}(\R^d)$ which makes it a topological group.
On the other hand, we also get 
\begin{equation} \label{eq:intro2}
	\cG_{n,\be-}(\R^d) = \cD^{n,\be-}(\R^d)_0, \quad \cG_{n,\be+}(\R^d) = \cD^{n,\be+}(\R^d)_0
\end{equation}
which endows $\cG_{n,\be\pm}(\R^d)$ with a $C^{0,\om}$ 
Lie group structure, for every slowly vanishing modulus of continuity $\om$; 
on $\cG_{n,\be-}(\R^d)$ we also get a topological group structure and a half-Lie group structure.
We wish to point out that our proof of \eqref{eq:intro1} subsequently shows that the equality \eqref{eq:intro1} 
also holds 
if in the definition of the Trouv\'e group $\cG_{n,\be}(\R^d)$ one restricts to 
pointwise time-dependent $C^{n,\be}_0$-vector fields which are piecewise $C^n$ in time; see 
\Cref{rem:Trouve}.

In the third part we investigate the continuity of the flow map $u \mapsto \Ph$. 
We find that as a mapping 
\begin{equation} \label{eq:intro-flowmap1}
	L^1([0,1],C^{n,\be}_0(\R^d,\R^d)) \to C([0,1],\cD^{n,\al}(\R^d))
\end{equation}
the flow map is 
\begin{itemize}
	\item bounded, if $n\in \N_{\ge 1}$ and $0 < \al \le \be \le 1$,
	\item continuous, even $C^{0,\be-\al}$, if $n\in \N_{\ge 2}$ and $0 < \al < \be \le 1$. 
\end{itemize}
As a corollary we obtain that as a mapping 
\begin{equation} \label{eq:intro-flowmap2}
	L^1([0,1],C^{n,\be-}_0(\R^d,\R^d)) \to C([0,1],\cD^{n,\be-}(\R^d))
\end{equation}
the flow map is bounded for all $n\ge 1$ and continuous and $C^{0,\om}$ if $n\ge 2$ (and arbitrary $\be \in (0,1]$),  
for every slowly vanishing modulus of continuity $\om$.

In \cite{BruverisVialard14} similar results were obtained in the Sobolev case $E = H^s(\R^d,\R^d)$, for $s > d/2+1$.
In particular, it was shown that $H^s$ is ODE closed and that
\[
	\cG_{s}(\R^d) = \cD^s(\R^d)_0,
\] 
where $\cG_{s}(\R^d)$ denotes the corresponding Trouv\'e group and $\cD^s(\R^d)$ the group of orientation preserving  
diffeomorphisms of $\R^d$ which differ from the identity by a mapping in $H^s(\R^d,\R^d)$. 
The methods are quite different: thanks to the fact that $D^s(\R^d)$ is a topological group 
(cf.\ \cite{InciKappelerTopalov13}) 
Carath\'eodory's solution theory for ODEs on Banach spaces which are Bochner integrable in time is well suited 
for this setting.

The paper is structured as follows. 
We fix notation and present the main technical tools in \Cref{sec:preparation}.
We also review some results on the composition of H\"older functions essentially due to \cite{LlaveObaya99}; 
since we need slightly altered versions we give proofs but relegate them to \Cref{appendix}.
In \Cref{sec:groups} we investigate the groups $\cD^{n,\be}(\R^d)$, $\cD^{n,\be-}(\R^d)$  and $\cD^{n,\be+}(\R^d)$. 
We prove ODE closedness of $C^{n,\be}_0$ and the identities \eqref{eq:intro1} and \eqref{eq:intro2} in 
\Cref{Trouve}. 
In \Cref{sec:flows} we study the continuity of the flow maps \eqref{eq:intro-flowmap1} and 
\eqref{eq:intro-flowmap2}.

\subsection*{Acknowledgement}
We are indebted to Peter Michor who brought the Trouv\'e group to our attention and proposed the notions of 
ODE closedness and ODE hull.

%-----------------------------------------------------------------------------------------------------------------------
\section{Definitions and preliminary results} \label{sec:preparation}
%-----------------------------------------------------------------------------------------------------------------------

%-----------------------------------------------------------------------------------------------------------------------
\subsection{H\"older spaces} \label{ssec:Hoelder}
%-----------------------------------------------------------------------------------------------------------------------

	Let $k \in \N$, $\al \in (0,1]$.
	Let $E, F$ be Banach spaces and let $U \subseteq E$ be open.
	We consider the 
	\emph{global} H\"older space  
	\begin{equation*}
		C^{k,\alpha}_b (U, F):= \big\{  f \in C^k (U, F) :  \|f\|_{k,\alpha}<\infty\big\},
	\end{equation*}
	where 
	\begin{align*}
		\|f\|_{k,\alpha} &:= \max \{ \|f\|_{k}, [f]_{k,\alpha} \},
		\\
		\|f\|_{k} &:= \sup\{\|f^{(l)}(x)\|_{L_l} :x \in U,~ 0\leq l \leq k \},
		\\
		[f]_{k,\alpha} &:= \sup_{x,y \in U, \, x \ne y} \frac{\| f^{(k)}(x) -f^{(k)}(y)\|_{L_k}}{\|x-y\|^\alpha}.
	\end{align*}
	Here $f^{(l)} = d^l f: E \rightarrow L_l(E; F)$ is the Fr\'echet derivative of order $l$ and $L_l(E; F)$ 
	denotes the vector space of continuous $l$-linear mappings endowed with the operator norm $\|\cdot\|_{L_l}$.
	
	We denote by $C^{k,\alpha}_0 (E, F)$ the subspace of those mappings $f \in C^{k,\alpha}_0 (E, F)$ 
	that tend to $0$ at infinity together with 
	all their derivatives up to order $k$, i.e., for every $\ep > 0$ there is $r >0$  
	such that $\|f^{(l)}(x)\|_{L_l} \le \ep$ if $\|x\| > r$ and $0 \le l \le k$.

	All these spaces are Banach spaces.
	
	Local H\"older spaces are denoted by $C^{k,\al}$, i.e., $f \in C^{k,\al}(U,F)$ if each $x \in U$ 
	has a neighborhood $V$ in $U$ such that $f|_V \in C^{k,\al}_b(V,F)$.

	Let us recall interpolation and inclusion relations for H\"older spaces. 
	In the following $C^{n,0}_b := C^n_b$ and $\|\cdot \|_{n,0} := \|\cdot\|_n$. 
	
	\begin{lemma}[{\cite[3.1]{LlaveObaya99}}]
		\label{exp}
		Let $n \in \mathbb{N}$ and $0\leq \alpha < \beta < \gamma \leq 1 $ and set 
		$\mu := \frac{\gamma - \beta }{\gamma - \alpha}$. Then
		\begin{equation*}
			\|f\|_{n,\beta} \leq M_\alpha \|f\|_{n,\alpha}^\mu\|f\|_{n,\gamma}^{1-\mu}, 
			\quad  f \in C^{n,\gamma}_b(E, F), 
		\end{equation*}
		where $M_0 := 2$ and $M_\alpha := 1$ for $\alpha > 0$. 
	\end{lemma}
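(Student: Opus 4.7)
The plan is to prove the inequality by bounding each of the two components of $\|f\|_{n,\beta} = \max\{\|f\|_n, [f]_{n,\beta}\}$ separately and combining them. The key algebraic identity is that, with $\mu = (\gamma-\beta)/(\gamma-\alpha)$, one has $\alpha\mu + \gamma(1-\mu) = \beta$, which suggests interpolating via a geometric mean.

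First I would handle the seminorm $[f]_{n,\beta}$. Pick $x \neq y$ in $E$ and write
\[
	\|f^{(n)}(x)-f^{(n)}(y)\| = \|f^{(n)}(x)-f^{(n)}(y)\|^\mu \cdot \|f^{(n)}(x)-f^{(n)}(y)\|^{1-\mu}.
\]
If $\alpha > 0$, I estimate the first factor by $[f]_{n,\alpha}^\mu \|x-y\|^{\alpha\mu}$ and the second by $[f]_{n,\gamma}^{1-\mu}\|x-y\|^{\gamma(1-\mu)}$; by the exponent identity, the powers of $\|x-y\|$ combine to $\|x-y\|^\beta$, giving $[f]_{n,\beta} \le [f]_{n,\alpha}^\mu [f]_{n,\gamma}^{1-\mu} \le \|f\|_{n,\alpha}^\mu\|f\|_{n,\gamma}^{1-\mu}$. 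If $\alpha = 0$, the Hölder seminorm is not available, so I instead bound the first factor by the trivial estimate $\|f^{(n)}(x)-f^{(n)}(y)\|\le 2\|f\|_n$; the same computation then yields $[f]_{n,\beta} \le 2^\mu \|f\|_n^\mu [f]_{n,\gamma}^{1-\mu} \le 2\,\|f\|_{n,0}^\mu\|f\|_{n,\gamma}^{1-\mu}$, which accounts for the factor $M_0=2$.

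Next I would dispatch the $\|f\|_n$ part. For $\alpha > 0$ one has $\|f\|_n \le \|f\|_{n,\alpha}$ and $\|f\|_n \le \|f\|_{n,\gamma}$, so $\|f\|_n = \|f\|_n^\mu\|f\|_n^{1-\mu} \le \|f\|_{n,\alpha}^\mu\|f\|_{n,\gamma}^{1-\mu}$. For $\alpha = 0$, we have $\|f\|_{n,0} = \|f\|_n$ by convention, so again $\|f\|_n \le \|f\|_n^\mu\|f\|_{n,\gamma}^{1-\mu} = \|f\|_{n,0}^\mu\|f\|_{n,\gamma}^{1-\mu}$. In both cases the bound is at most $M_\alpha\|f\|_{n,\alpha}^\mu\|f\|_{n,\gamma}^{1-\mu}$. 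Taking the maximum of the two estimates concludes the proof.

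I do not expect any real obstacle; the proof is a direct convexity/geometric-mean argument. The only point requiring care is the case $\alpha=0$, where the absence of a finite seminorm forces the use of the crude bound $2\|f\|_n$, and this is exactly where the constant $M_0=2$ enters.
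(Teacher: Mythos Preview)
Your argument is correct and is the standard geometric-mean interpolation proof of this inequality. The paper does not actually prove this lemma; it is simply quoted from \cite[3.1]{LlaveObaya99}, so there is no in-paper proof to compare against, but your reasoning matches the usual proof given in that reference.
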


	\begin{lemma}[{\cite[3.7]{LlaveObaya99}}]
		\label{inclusion}
		Let $m,n \in \mathbb{N}$ and $\alpha, \beta \in [0,1]$ with $m+\alpha \leq n+\beta$.
		Then $C^{n,\beta}_b(E,F) \subseteq C^{m,\alpha}_b(E,F)$ and
		\begin{equation*}
			\|f\|_{m,\alpha} \leq 2 \|f\|_{n,\beta}, \quad f \in C^{n,\beta}_b(E,F).  
		\end{equation*}
	\end{lemma}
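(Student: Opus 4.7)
The plan is to reduce to two cases based on whether $m=n$ or $m<n$, and in each case split the H\"older quotient according to whether $\|x-y\|\le 1$ or $\|x-y\|>1$.

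First I would handle the boundedness part of the norm: trivially $\|f\|_m \le \|f\|_n \le \|f\|_{n,\beta}$ since $m\le n$ (which follows from $m+\alpha \le n+\beta$ and $\alpha,\beta \in [0,1]$; if $m=n+1$ then $\alpha \le \beta - 1 \le 0$ forces $\alpha = 0$ and $\beta = 1$, contradiction). If $\alpha = 0$ there is nothing more to do, so I may assume $\alpha > 0$ and concentrate on bounding $[f]_{m,\alpha}$.

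\textbf{Case $m=n$.} Then $\alpha \le \beta$. For $x \ne y$ with $\|x-y\| \le 1$, interpolate
\[
\frac{\|f^{(n)}(x)-f^{(n)}(y)\|}{\|x-y\|^\alpha} = \frac{\|f^{(n)}(x)-f^{(n)}(y)\|}{\|x-y\|^\beta}\, \|x-y\|^{\beta-\alpha} \le [f]_{n,\beta},
\]
while for $\|x-y\|>1$ the quotient is bounded by $2\|f\|_n$ since $\|x-y\|^\alpha \ge 1$. Taking the supremum gives $[f]_{n,\alpha} \le \max\{[f]_{n,\beta},2\|f\|_n\} \le 2\|f\|_{n,\beta}$.

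\textbf{Case $m<n$.} Then $f^{(m)}$ has Fr\'echet derivative $f^{(m+1)}$ with $\|f^{(m+1)}\|_\infty \le \|f\|_n$. By the mean value theorem, for $\|x-y\|\le 1$,
\[
\frac{\|f^{(m)}(x)-f^{(m)}(y)\|}{\|x-y\|^\alpha} \le \|f\|_n\, \|x-y\|^{1-\alpha} \le \|f\|_n,
\]
and for $\|x-y\|>1$ the quotient is again bounded by $2\|f\|_m \le 2\|f\|_n$. Combining, $[f]_{m,\alpha} \le 2\|f\|_n \le 2\|f\|_{n,\beta}$.

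I expect no real obstacle; the only thing to be careful about is the edge behavior when $\alpha=0$ or $\beta=0$ (where the seminorm degenerates to the $C^k_b$-norm) and the verification that $m\le n$ actually follows from the hypothesis. The factor $2$ in the conclusion cannot be improved because of the large-distance regime in the H\"older quotient.
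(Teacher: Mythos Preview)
The paper gives no proof of this lemma; it simply cites \cite[3.7]{LlaveObaya99}. Your argument is the standard one and is correct in substance for all cases with $m\le n$.

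There is, however, a small slip in your justification that $m\le n$. The case $m=n+1$, $\alpha=0$, $\beta=1$ is \emph{not} a contradiction with the hypotheses: it satisfies $m+\alpha=n+\beta$ and $\alpha,\beta\in[0,1]$. In this borderline case the lemma as literally stated is actually false, since $C^{n,1}_b\not\subseteq C^{n+1}_b$ (a Lipschitz function need not be $C^1$). The statement is to be read with the tacit understanding that this degenerate identification is excluded (or, equivalently, that when $\alpha=0$ one already has $m\le n$ from context). Once you replace ``contradiction'' by a remark that this boundary case is excluded by convention, your proof is complete and matches the standard argument behind the cited result. Your claim that the constant $2$ is sharp is also not quite argued: the large-distance bound gives $2\|f\|_n$, but one would need an example to see the factor cannot be improved; this is inessential for the lemma.
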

	
%	\begin{remark}
%		The original statement involves an additional constant on the right hand side of the above estimate. 
%		This is due to the fact that they trea{}t H\"older spaces defined on open subsets of Banach spaces. 
%		Here the distance between two points along a curve in the open set can be much larger than the distance 
%		in the surrounding Banach space.
%	\end{remark}
	
%-----------------------------------------------------------------------------------------------------------------------	
\subsection{The Bochner integral}
%-----------------------------------------------------------------------------------------------------------------------

	Cf.\ \cite{DiestelUhl77}. 
	Let $I = [a,b] \subseteq \R$ be a closed interval (with the Lebesgue measure). Let $E$ be a Banach space.
	A measurable function $f : I \to E$ is \emph{Bochner integrable} if there is a sequence of integrable simple functions 
	$s_n : I \to E$ such that $s_n \to f$ a.e.\ (i.e., $f$ is \emph{strongly measurable}) and 
	$\int_a^b \|f-s_n\| \,dt \to 0$. In this case the \emph{Bochner integral} is defined 
	by 
	\[
		\int_a^b f \, dt = \lim_{n \to \infty}  \int_a^b s_n \,dt.
	\] 
	By the Pettis measurability theorem, $f : I \to E$ is strongly measurable if and only if it is \emph{weakly measurable} 
	(i.e., $\la \o f$ is measurable for all $\la \in E^*$) 
	and \emph{essentially separable valued} (i.e., $f(I\setminus N)$ is separable in $E$ for some null set $N$).
	A strongly measurable function $f : I \to E$ is Bochner integrable if and only if $\int_a^b\|f\| \,dt < \infty$.
	Then the triangle inequality holds:
	\[
		\Big\| \int_a^b f \,dt  \Big\| \le \int_a^b\|f\| \,dt. 
	\]  
	If $T : E \to F$ is a bounded linear operator into another Banach space $F$ then $Tf: I \to F$ is Bochner integrable
	and 
	\[
		T \int_a^b f \,dt = \int_a^b Tf \,dt.
	\]
	We will use the following version
    of the fundamental theorem of calculus. 

	\begin{lemma}
		\label{ftc}
		If $f:[a,b] \rightarrow E$ is continuous, then 
		\[\frac{d}{dt}\int_a^t f(s) \, ds = f(t),\] for all $t \in I$. 
		If $f : [a,b] \to E$ is $C^1$, then
		\begin{equation*}
			f(b)-f(a) = \int_a^b f'(s) \, ds.
		\end{equation*}
	\end{lemma}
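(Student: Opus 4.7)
The plan is to prove the two parts in order, relying on the basic properties of the Bochner integral that were just summarized (triangle inequality, commutation with bounded linear operators) and reducing the vector-valued statement to the scalar FTC where possible.

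For the first statement, I would compute the difference quotient directly. For $t \in [a,b)$ and small $h > 0$, additivity of the Bochner integral over subintervals gives
\[
\frac{1}{h}\Big(\int_a^{t+h} f(s)\,ds - \int_a^t f(s)\,ds\Big) - f(t)
= \frac{1}{h}\int_t^{t+h}\big(f(s) - f(t)\big)\,ds.
\]
Applying the triangle inequality for the Bochner integral bounds the norm of the right-hand side by $\sup_{s \in [t,t+h]}\|f(s)-f(t)\|$, which tends to $0$ as $h \downarrow 0$ by continuity of $f$ at $t$. The case $h < 0$ (and $t=b$) is symmetric, yielding the stated derivative formula.

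For the second statement, the cleanest route is to reduce to the scalar FTC via Hahn--Banach. Fix any $\la \in E^*$. Since $f \in C^1([a,b],E)$, the composition $\la \o f : [a,b] \to \R$ is $C^1$ with derivative $\la \o f'$, and $s \mapsto \la(f'(s))$ is continuous, hence Bochner (Lebesgue) integrable. Using that bounded linear operators commute with the Bochner integral and the classical scalar fundamental theorem of calculus,
\[
\la\Big(\int_a^b f'(s)\,ds\Big) = \int_a^b \la(f'(s))\,ds = (\la \o f)(b) - (\la \o f)(a) = \la\big(f(b)-f(a)\big).
\]
Since this holds for every $\la \in E^*$, the Hahn--Banach theorem forces $f(b)-f(a) = \int_a^b f'(s)\,ds$.

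There is no real obstacle here; both parts are standard consequences of the summary just presented. The only point that requires a small amount of care is justifying strong measurability/integrability of the integrands ($s \mapsto f(s)-f(t)$ in part~(1) and $f'$ in part~(2)), but these are both continuous $E$-valued functions on a compact interval, hence uniformly continuous with relatively compact (thus separable) image, and therefore Bochner integrable by the Pettis measurability theorem recalled above.
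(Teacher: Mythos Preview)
Your proof is correct and entirely standard. Note that the paper does not actually supply a proof of this lemma: it is stated as a well-known fact (``We will use the following version of the fundamental theorem of calculus'') and used freely thereafter. So there is nothing to compare your argument against; your write-up simply fills in the routine details the authors chose to omit.
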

	
	It is then straightforward to deduce a mean value inequality for $C^1$-mappings between Banach spaces.

%-----------------------------------------------------------------------------------------------------------------------	
	\subsection{Carath\'eodory type ODEs} \label{carath}
%-----------------------------------------------------------------------------------------------------------------------

	Next we collect some results on Carath\'eodory type differential equations. 
	Those are certain ODEs on Banach spaces whose right hand side is not continuous in time. 
	We refer to \cite{AulbachWanner96} and to the appendix in 
	\cite{BruverisVialard14}.

	 Let $E$ be a Banach space, $U \subseteq E$ some open subset and $I = [t_0,t_1]$ some real interval. 
	 We say that $f:I\times U \rightarrow E$ satisfies the \textit{Carath\'eodory property} if:
	\begin{itemize}
		\item[(i)] For every $t \in I$ the mapping $f(t,\cdot):U\rightarrow E$ is continuous.
		\item[(ii)] For every $x \in U$ the mapping $f(\cdot ,x):I \rightarrow E$ is strongly measurable. 
	\end{itemize}
	
	Also the notion of solution of such an ODE is weakened: we say a continuous curve 
	$\Ph: I \rightarrow U$ is a solution of the initial value problem
	\begin{align}
		\label{bode}
		\partial_t x &= f(t,x),\quad 
		x(t_0)=x_0
	\end{align}
	if and only if $s \mapsto f(s,\Ph(s))$ is Bochner integrable and	
	\begin{align}
		\label{odesol}
		\Ph(t) = x_0 + \int_{t_0}^t f(s,\Ph(s)) \, ds, \quad  t \in I.
	\end{align}
	This already implies that $\Ph : I \rightarrow U$ is continuous. 
	It is actually absolutely continuous in the sense that there exists a Bochner integrable $\gamma: I \rightarrow E$ 
	such that $\Ph(t)= \Ph(t_0) +\int_{t_0}^t \gamma(s)\, ds$;  
	in particular, $\Ph$ is differentiable a.e.\ and $\Ph'=\gamma$ a.e.\ 
	(see \cite[Lemma 1.28]{Glockner16}).
	
	The next theorem is the central existence and uniqueness result for Carath\'eodory type differential equations; it is taken from \cite[Thm. A.2]{BruverisVialard14}.
	\begin{theorem}
		\label{cara}
		Let $I = [t_0,t_1]$ and let $f: I \times U \rightarrow E$ have the Carath\'eodory property. Let 
		$B(x_0,\varepsilon) := \{x \in E : \|x-x_0\|<\ep \} \subseteq U$. 
		In addition let $m,l$ be positive locally integrable functions defined on $I$ such that the estimates
		\begin{align*}
			\|f(t,x_1)-f(t,x_2)\| &\leq l(t) \|x_1-x_2\|\\
			\|f(t,x)\| &\leq m(t)
		\end{align*}
		are valid for almost all $t$ and all $x,x_1,x_2 \in B(x_0,\varepsilon)$. Let $\delta$ be such that
		\[
		\int_{t_0}^{t_0+\delta} m(s)\, ds < \varepsilon,
		\]
		then \eqref{bode} has a unique solution $\phi:[t_0,t_0+\delta]\rightarrow B(x_0,\varepsilon)$ in the sense of \eqref{odesol}.
	\end{theorem}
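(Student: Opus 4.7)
The plan is to reformulate the initial value problem \eqref{bode} as a fixed point problem for the Picard-type operator
\[
	T\phi(t) := x_0 + \int_{t_0}^t f(s,\phi(s))\,ds
\]
on the complete metric space $X := C([t_0,t_0+\delta], \overline{B(x_0,\varepsilon)})$, and apply the Banach fixed point theorem with respect to a Bielecki-type weighted norm tailored to absorb the (only locally integrable) Lipschitz weight $l$.

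The first step is to check that for every $\phi \in X$, the composition $s \mapsto f(s,\phi(s))$ is Bochner integrable on $[t_0,t_0+\delta]$, so that $T\phi$ is well defined and continuous by \Cref{ftc}. Strong measurability follows by approximating $\phi$ uniformly with step functions $\phi_n$ taking finitely many values $x_{n,1},\dots,x_{n,k_n} \in \overline{B(x_0,\varepsilon)}$; then $f(\cdot,\phi_n(\cdot))$ is a finite sum of strongly measurable functions $f(\cdot,x_{n,j}) \mathds{1}_{A_{n,j}}$, and by the continuity in the space variable, $f(s,\phi_n(s)) \to f(s,\phi(s))$ pointwise. The bound $\|f(s,\phi(s))\| \le m(s) \in L^1$ then gives Bochner integrability. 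The same bound shows $\|T\phi(t) - x_0\| \le \int_{t_0}^{t_0+\delta} m(s)\,ds < \varepsilon$, so $T$ maps $X$ into $X$.

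Next, let $L(t) := \int_{t_0}^t l(s)\,ds$ and equip $C([t_0,t_0+\delta],E)$ with the equivalent Bielecki norm
\[
	\|\phi\|_* := \sup_{t \in [t_0,t_0+\delta]} e^{-2L(t)}\, \|\phi(t)\|.
\]
For $\phi_1,\phi_2 \in X$ the Lipschitz estimate gives
\[
	\|T\phi_1(t) - T\phi_2(t)\| \le \int_{t_0}^t l(s) \|\phi_1(s)-\phi_2(s)\|\,ds \le \|\phi_1-\phi_2\|_* \int_{t_0}^t l(s) e^{2L(s)}\,ds,
\]
and since $\int_{t_0}^t l(s) e^{2L(s)}\,ds = \tfrac{1}{2}(e^{2L(t)}-1) \le \tfrac{1}{2} e^{2L(t)}$, we obtain $\|T\phi_1 - T\phi_2\|_* \le \tfrac12 \|\phi_1-\phi_2\|_*$. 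Banach's fixed point theorem in the closed subset $X$ of the Banach space $(C([t_0,t_0+\delta],E), \|\cdot\|_*)$ yields a unique fixed point $\Phi \in X$, which by \eqref{odesol} is the sought solution. Uniqueness among all $X$-valued solutions is immediate from the contraction, and extends to all solutions with values in $B(x_0,\varepsilon)$ by a direct Gronwall argument applied to $\|\Phi_1(t)-\Phi_2(t)\| \le \int_{t_0}^t l(s) \|\Phi_1(s)-\Phi_2(s)\|\,ds$.

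The main obstacle is the verification of strong (Bochner) measurability of $s \mapsto f(s,\phi(s))$: one cannot simply invoke continuity in $x$ together with measurability in $t$ because the Pettis theorem requires essentially separable values, which is non-trivial when $E$ (e.g.\ a H\"older space) is non-separable. The step-function approximation of $\phi$ sketched above supplies exactly this separability — the values of $f(s,\phi_n(s))$ lie in a countable set — and this is the only place where the Carath\'eodory property is used in a non-pointwise way. Once this measurability is in hand, the rest is a routine Bielecki contraction.
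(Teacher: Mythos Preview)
The paper does not give its own proof of this theorem; it is quoted verbatim from \cite[Thm.~A.2]{BruverisVialard14} (see also \cite{AulbachWanner96}), so there is no in-house argument to compare against. Your approach is the standard one and is essentially correct: Picard operator, Bochner integrability via step-function approximation of the continuous curve (this is precisely the content of \cite[Lemma~2.2]{AulbachWanner96}, which the paper itself invokes later in \Cref{point}), and a Bielecki weight to get a contraction in one shot without subdividing the interval.

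One small technical point deserves attention. You take $X = C([t_0,t_0+\delta], \overline{B(x_0,\varepsilon)})$, but the hypothesis is only $B(x_0,\varepsilon) \subseteq U$, so $f$ need not be defined on the closed ball, and the Lipschitz and growth bounds are only assumed on the open ball. The clean fix is to pick $\varepsilon'$ with $\int_{t_0}^{t_0+\delta} m(s)\,ds \le \varepsilon' < \varepsilon$ and work on $X = C([t_0,t_0+\delta], \overline{B(x_0,\varepsilon')})$; then $\overline{B(x_0,\varepsilon')} \subseteq B(x_0,\varepsilon) \subseteq U$, $X$ is complete, and your estimates go through unchanged. With this adjustment the proof is complete.
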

	
	If the ODE is linear, we have global existence in time:
	\begin{theorem}
		\label{lincara}
		Let $I = [t_0,t_1]$.
		Let $A:I\rightarrow L(E)$ and $b:I\rightarrow E$ be Bochner integrable. 
		Then for all $x_0 \in E$ there exists a unique solution on $I$ of 
		\begin{align*}
			\partial_t x(t) &= A(t)\cdot x(t) +b(t),\quad 
			x(t_0)=x_0
		\end{align*}
		in the sense of \eqref{odesol}.
	\end{theorem}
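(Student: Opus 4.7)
The plan is to set $f(t,x) := A(t)\, x + b(t)$ and reduce the assertion to \Cref{cara}, followed by an a priori estimate that upgrades local existence to global existence. For the Carath\'eodory property, $f(t,\cdot)$ is affine, hence continuous; and for fixed $x \in E$, strong measurability of $A$ together with the fact that the evaluation map $\on{ev}_x : L(E) \to E$, $T \mapsto Tx$, is bounded linear implies strong measurability of $s \mapsto A(s)\, x$, so that $f(\cdot,x) = A(\cdot)\,x + b(\cdot)$ is strongly measurable as a sum. On any ball $B(x_0,\ve)$ one has the Lipschitz and growth bounds
\[
	\|f(t,x_1)-f(t,x_2)\| \le \|A(t)\|\, \|x_1-x_2\|, \qquad \|f(t,x)\| \le \|A(t)\|(\|x_0\|+\ve) + \|b(t)\|,
\]
and both dominating functions are integrable by Bochner integrability of $A$ and $b$. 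Hence \Cref{cara} yields a solution on some subinterval $[t_0,t_0+\de]$.

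Next I derive an a priori bound. If $\Ph:[t_0,T] \to E$ is any solution, taking norms in \eqref{odesol} gives
\[
	\|\Ph(t)\| \le \|x_0\| + \int_{t_0}^t \|b(s)\|\, ds + \int_{t_0}^t \|A(s)\|\, \|\Ph(s)\|\, ds,
\]
and the integral form of Gronwall's inequality produces
\[
	R := \Big(\|x_0\| + \int_{t_0}^{t_1} \|b(s)\|\, ds\Big) \exp\!\Big(\int_{t_0}^{t_1} \|A(s)\|\, ds\Big)
\]
as a bound for $\|\Ph(t)\|$, uniformly over $[t_0,T]$ and independently of $T \le t_1$.

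Now let $[t_0,T^*)$ be the maximal interval on which a solution exists, and suppose for contradiction $T^* < t_1$. The bound above forces $\|\Ph(t)\| \le R$ on $[t_0,T^*)$. By absolute continuity of the Bochner integral, one can pick $t^* \in [t_0,T^*)$ close enough to $T^*$ that $\int_{t^*}^{t_1}\big(\|A(s)\|(R+1) + \|b(s)\|\big)\, ds < 1$; applying \Cref{cara} at the initial datum $(t^*,\Ph(t^*))$ on the ball $B(\Ph(t^*),1)$ then gives a local solution whose concatenation with $\Ph$ extends past $T^*$, a contradiction. Hence $T^* = t_1$ and the solution exists on all of $I$. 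For uniqueness, two solutions $\Ph_1,\Ph_2$ with the same initial value satisfy $\|\Ph_1(t)-\Ph_2(t)\| \le \int_{t_0}^t \|A(s)\|\, \|\Ph_1(s)-\Ph_2(s)\|\, ds$, and Gronwall forces them to coincide.

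The only nontrivial technical point is the measurability verification in Step 1: one must check that $s \mapsto A(s)\, \Ph(s)$ is strongly measurable (and Bochner integrable) when $\Ph$ is merely continuous. This follows by approximating $A$ by simple $L(E)$-valued functions and observing that on each level set the product equals a continuous composition $T_k \o \Ph$, so the limit is strongly measurable; the Bochner integrability is then immediate from $\|A(s)\Ph(s)\| \le \|A(s)\|\, \|\Ph\|_{\infty,[t_0,t_1]}$. This measurability check is what makes the linear theory fit into the Carath\'eodory framework of \Cref{cara}.
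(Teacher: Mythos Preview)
The paper does not actually supply a proof of \Cref{lincara}; it is stated as a known result, with the surrounding discussion referring the reader to \cite{AulbachWanner96} and the appendix of \cite{BruverisVialard14}. So there is no in-paper argument to compare against.

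Your proposal is a correct and standard derivation of the linear global existence theorem from the local result \Cref{cara}: verify the Carath\'eodory property and the Lipschitz/growth bounds with $l(t)=\|A(t)\|$ and $m(t)=\|A(t)\|(\|x_0\|+\ve)+\|b(t)\|$, obtain local existence, derive a uniform a priori bound via Gronwall, and continue. The uniqueness argument via Gronwall is likewise standard. Your final paragraph on strong measurability of $s\mapsto A(s)\Ph(s)$ is the right technical observation; note that this is exactly the content of \cite[Lemma~2.2]{AulbachWanner96}, which the paper invokes later in the proof of \Cref{point}, so you are in line with the tools the paper relies on. The only cosmetic point is that invoking a ``maximal interval of existence'' tacitly uses a standard continuation/Zorn argument; alternatively one can avoid this by observing that the a priori bound $R$ lets you work on the fixed ball $B(0,R+1)$ and iterate \Cref{cara} finitely many times across $I$, since absolute continuity of $t\mapsto\int_{t_0}^t(\|A(s)\|(R+1)+\|b(s)\|)\,ds$ gives a uniform lower bound on the step size.
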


%-----------------------------------------------------------------------------------------------------------------------	
	\subsection{Composition in H\"older spaces} 	\label{composition}
%-----------------------------------------------------------------------------------------------------------------------	

	Let us review some regularity results for the composition in H\"older spaces due to \cite{LlaveObaya99}. 
	But in contrast to \cite{LlaveObaya99}, we need the results for mappings $F : \R^d \to \R^d$ of the form 
	$F =\Id + f$ where $f$ is in some H\"older class; note that $\Id$ is unbounded and hence not a member of 
	any $C^{n,\be}_b(\R^d,\R^d)$. For this reason it is convenient to introduce the seminorm
	\[
		[F]_n := \|F^{(n)}\|_{0} = \sup_{x \in \R^d} \|F^{(n)}(x)\|_{L_n}.
	\]
	If $F = \Id +f$ and $n \ge 1$, then 
	\begin{equation}
		[F]_n \le 1 + [f]_n.
	\end{equation}
	It is easy to adapt the proofs in \cite{LlaveObaya99} to our needs; they are 
	outlined in \Cref{appendix} for completeness' sake.

	\begin{proposition}[{\cite[4.2]{LlaveObaya99}}]
		\label{bilinear}
		Let $E,F,G,H$ be Banach spaces and $U \subseteq E$ open. Let $m \in \N$, $\al \in (0,1]$, and $b: F \times G \rightarrow H$ be a bilinear continuous mapping. Then $b_*: C^{m,\alpha}_b (U, F) \times C^{m,\alpha}_b(U,G) \rightarrow C^{m,\alpha}_b (U, H)$,
		defined by $b_*(f,g)(x):= b(f(x),g(x))$, 
		is bilinear, continuous, and $\|b_*\| \leq 2^{m+1}\|b\|$.
	\end{proposition}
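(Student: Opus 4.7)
The plan is to apply the higher-order Leibniz rule for Fréchet derivatives of a bilinear pairing and then estimate the resulting expression termwise in the norms $\|\cdot\|_{m,\alpha}$.

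Bilinearity of $b_*$ is immediate from the bilinearity of $b$, so by standard considerations it suffices to establish the norm estimate $\|b_*(f,g)\|_{m,\al}\le 2^{m+1}\|b\|\,\|f\|_{m,\al}\|g\|_{m,\al}$ for all $f\in C^{m,\al}_b(U,F)$, $g\in C^{m,\al}_b(U,G)$. First I would record the pointwise formula obtained by iterating the elementary identity $d\bigl(b(f,g)\bigr)\cdot v = b(df\cdot v,g)+b(f,dg\cdot v)$. By induction this produces
\begin{equation*}
(b_*(f,g))^{(m)}(x)(v_1,\dots,v_m)=\sum_{k=0}^{m}\sum_{S\in\binom{[m]}{k}} b\bigl(f^{(k)}(x)(v_S),\,g^{(m-k)}(x)(v_{[m]\setminus S})\bigr),
\end{equation*}
where $v_S$ denotes the tuple indexed by $S$ in increasing order. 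In particular $b_*(f,g)$ is of class $C^m$ with $(b_*(f,g))^{(m)}$ continuous, and taking operator norms gives $\|(b_*(f,g))^{(m)}(x)\|_{L_m}\le \|b\|\sum_{k=0}^m\binom{m}{k}\|f^{(k)}(x)\|\|g^{(m-k)}(x)\|\le 2^m\|b\|\,\|f\|_m\|g\|_m$.

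Next I would bound the Hölder seminorm $[b_*(f,g)]_{m,\al}$. For every $k$ and every $x\ne y$ the standard telescoping
\begin{equation*}
b(f^{(k)}(x),g^{(m-k)}(x))-b(f^{(k)}(y),g^{(m-k)}(y)) = b(f^{(k)}(x)-f^{(k)}(y),g^{(m-k)}(x))+b(f^{(k)}(y),g^{(m-k)}(x)-g^{(m-k)}(y))
\end{equation*}
after division by $\|x-y\|^\al$ yields the estimate $\|b\|\bigl([f]_{k,\al}\|g\|_{m-k}+\|f\|_k[g]_{m-k,\al}\bigr)$. The key technical input is that, while the seminorms $[f]_{k,\al}$ for $0<k<m$ do not appear in $\|f\|_{m,\al}$ directly, the inclusion relation \Cref{inclusion} applied with $n=m$, $\beta=\al$ delivers $[f]_{k,\al}\le \|f\|_{k,\al}\le 2\|f\|_{m,\al}$ (and analogously for $g$), while the boundary cases $k\in\{0,m\}$ are controlled without the extra factor. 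Summing over $k$ then gives $[b_*(f,g)]_{m,\al}\lesssim 2^{m+1}\|b\|\,\|f\|_{m,\al}\|g\|_{m,\al}$, which combined with the $\|\cdot\|_m$-bound yields the claim.

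The only genuine obstacle is the intermediate Hölder control of $f^{(k)}$ and $g^{(m-k)}$ for $0<k<m$; it is handled cleanly by invoking \Cref{inclusion} instead of resorting to ad hoc interpolation between $\|f\|_m$ and $\|f\|_0$ on small versus large scales. Everything else is a bookkeeping argument on the Leibniz expansion, and tracking the combinatorial constants $\binom{m}{k}$ produces the stated bound $\|b_*\|\le 2^{m+1}\|b\|$.
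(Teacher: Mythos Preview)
Your direct Leibniz-expansion argument is valid and yields the result, but it differs organizationally from the proof in \cite[4.2]{LlaveObaya99} (which the paper simply cites). That argument proceeds by induction on $m$: the base case $m=0$ is the two-line computation $\|b_*(f,g)\|_{0,\al}\le 2\|b\|\,\|f\|_{0,\al}\|g\|_{0,\al}$, and for the step one writes $d(b_*(f,g))=(b_1)_*(df,g)+(b_2)_*(f,dg)$ with the auxiliary bilinear maps $b_1(A,z)(v):=b(Av,z)$ and $b_2(y,B)(v):=b(y,Bv)$, then applies the hypothesis at level $m-1$ to each summand. Your route unwinds this induction into a single Leibniz sum, at the price of having to invoke \Cref{inclusion} to control all the intermediate seminorms $[f]_{k,\al}$ for $k<m$ at once; the inductive scheme hides the same need inside the passage from $\|g\|_{m-1,\al}$ to $\|g\|_{m,\al}$. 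What the inductive argument buys is that it never handles more than one derivative at a time, so the auxiliary bilinear maps and the pairing $\|df\|_{m-1,\al}\le\|f\|_{m,\al}$ do most of the work cleanly.

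Two small points to watch. First, \Cref{inclusion} as stated in the paper is only for functions on the full space $E$, not on arbitrary open $U\subseteq E$, so invoking it here tacitly needs $U$ convex (or some replacement argument). Second, your assertion that the boundary cases $k\in\{0,m\}$ avoid the extra factor of $2$ is only half right: for $k=m$ the term $\|f\|_m[g]_{0,\al}$ still requires it (and symmetrically for $k=0$), so a careful count along your lines produces $2^{m+2}\|b\|$ rather than $2^{m+1}\|b\|$. This is purely cosmetic and does not affect continuity of $b_*$.
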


	The following theorem shows stability under composition and continuity of the right translation. 
	We will denote by $f^\star$ the pull-back by $\Id + f$, i.e., $f^\star := (\Id +f)^*$.

\begin{theorem}[{\cite[6.2]{LlaveObaya99}}]  \label{comp1}
	Let $m \in \N_{\ge 1}$ and $\al \in (0,1]$.	
	Let $f \in C^{m,\alpha}_b(\mathbb{R}^d,\mathbb{R}^d)$ and $g \in C^{m,\al}_b (\mathbb{R}^d, G)$ for some Banach space $G$. 
	Then $g\circ (\Id+f) \in C^{m,\al}_b(\mathbb{R}^d,G)$ and there exists a constant $M = M(m) \ge 1$  
	such that
			\begin{equation} \label{eq:comp1}
				\|g\circ(\Id+f)\|_{m,\al} \leq M \|g\|_{m,\al} (1+\|f\|_{m,\al})^{m+\al}.
			\end{equation}
	In particular, for every fixed $f \in C^{m,\alpha}_b(\mathbb{R}^d,\mathbb{R}^d)$, the linear mapping 
	\begin{align*}
			f^\star: C^{m,\al}_b(\R^d,G) \rightarrow C^{m,\al}_b(\R^d,G),\quad g \mapsto f^\star(g) := g \circ(\Id+f)
		\end{align*}
		is continuous.	
\end{theorem}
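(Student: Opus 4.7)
Set $F := \Id + f$ for brevity. The plan is to induct on $m$, writing $\|g\circ F\|_{m,\al} \le \|g\circ F\|_0 + \|(g\circ F)'\|_{m-1,\al}$ and controlling the two pieces separately; the trivial estimate $\|g\circ F\|_0 \le \|g\|_0$ disposes of the first and will be absorbed into the final bound, so the heart of the argument is an estimate for $(g\circ F)'$. Observe that $F' = \Id_{L(\R^d,\R^d)} + f'$, so $F'$ differs from a constant by a $C^{m-1,\al}_b$-function, giving $\|F'\|_{m-1,\al} \le 1 + \|f\|_{m,\al}$; this is the only place where $[F]_1 \le 1 + [f]_1$ really matters.

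For the base case $m=1$ I would proceed by direct computation: the chain rule yields $(g\circ F)'(x) = g'(F(x))\cdot F'(x)$, so $\|(g\circ F)'\|_0 \le \|g\|_1(1+\|f\|_1)$, and splitting
\[
g'(F(x))F'(x) - g'(F(y))F'(y) = (g'(F(x))-g'(F(y)))F'(x) + g'(F(y))(F'(x)-F'(y)),
\]
using $\|F(x)-F(y)\| \le (1+\|f\|_1)\|x-y\|$ on the first summand and $[f]_{1,\al}$ on the second, one obtains $[g\circ F]_{1,\al} \le \|g\|_{1,\al}(1+\|f\|_{1,\al})^{1+\al} + \|g\|_{1,\al}\|f\|_{1,\al}$. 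Combining everything gives $\|g\circ F\|_{1,\al} \le M(1)\|g\|_{1,\al}(1+\|f\|_{1,\al})^{1+\al}$ with $M(1)=2$, which is the desired shape and sets the exponent correctly.

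For the inductive step, assuming the result with constant $M(m-1)$ for order $m-1$, I would apply the hypothesis to $g' \in C^{m-1,\al}_b(\R^d, L(\R^d,G))$ in place of $g$, obtaining
\[
\|g'\circ F\|_{m-1,\al} \le M(m-1)\,\|g'\|_{m-1,\al}\,(1+\|f\|_{m-1,\al})^{m-1+\al}.
\]
Then factor the derivative as $(g\circ F)' = b_\ast(g'\circ F, F')$ where $b: L(\R^d,G) \times L(\R^d,\R^d) \to L(\R^d,G)$ is the continuous bilinear composition $(T,S)\mapsto T\cdot S$, $\|b\|\le 1$, and invoke \Cref{bilinear} to get
\[
\|(g\circ F)'\|_{m-1,\al} \le 2^m\,\|g'\circ F\|_{m-1,\al}\,\|F'\|_{m-1,\al}.
\]
Inserting the IH bound and $\|F'\|_{m-1,\al} \le 1 + \|f\|_{m,\al}$, and using \Cref{inclusion} in the form $\|f\|_{m-1,\al} \le 2\|f\|_{m,\al}$ together with $\|g'\|_{m-1,\al} \le \|g\|_{m,\al}$, yields exactly the exponent $(m-1+\al)+1 = m+\al$ up to a constant $M(m) := 2^m \cdot 2^{m-1+\al}\, M(m-1)$.

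The last assertion, continuity of $f^\star$, is then immediate: $f^\star$ is linear in $g$ and \eqref{eq:comp1} provides the operator norm bound with $f$ fixed. The main obstacle I expect is purely bookkeeping: making sure the exponent $m+\al$ is neither increased nor decreased by the bilinear factor and the inclusion \Cref{inclusion}; any looser application of these estimates would blow the exponent up to $m+1$ or higher and break the $C^{0,\be-\al}$-type estimates used later in the paper.
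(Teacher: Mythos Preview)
Your proof is correct and follows essentially the same induction-on-$m$ strategy as the paper: apply the inductive hypothesis to $dg \in C^{m-1,\al}_b$ and then use \Cref{bilinear} on the product that appears in $(g\circ F)'$. The only cosmetic difference is that the paper splits $F' = \mathbb{1} + df$ and applies the bilinear estimate to the pair $(dg\circ F,\, df)$, whereas you keep $F'$ intact and use $\|F'\|_{m-1,\al} \le 1 + \|f\|_{m,\al}$; both routes produce the correct exponent $m+\al$.
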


Continuity of the left translation is the content of the following theorem.
We denote by 
$B^{m,\alpha}(f,\delta) := \{g \in C^{m,\al}_b: \|f-g\|_{m,\al} < \de\}$ the open ball with radius $\de$ centered at $f$. 
By $g_\star$ we mean the push-forward by $g$ precomposed with translation by $\Id$, i.e., $g_\star := g_* \o (\Id + \,\cdot\,)$.

	\begin{theorem}[{\cite[6.2]{LlaveObaya99}}]
		\label{6.2}
		Let $m \in \N_{\ge 1}$ and $\al,\be \in (0,1]$, $\al < \be$.
		Let  $g \in C^{m,\beta}_b (\mathbb{R}^d, G)$ where $G$ is some Banach space. 
		Then, for every $f_0 \in C^{m,\alpha}_b(\R^d,\R^d)$, $R>0$,  and 
		$f_1, f_2 \in B^{m,\alpha}(f_0,R)$,
			\begin{equation} \label{eq:6.2}
				\|g_\star(f_1)-g_\star(f_2)\|_{m,\al} \le M\|g\|_{m,\beta} \|f_1-f_2\|_{m,\al}^{\beta - \al},
			\end{equation}
		where $M = M(m, \|f_0\|_{m,\al},R)$. 	
		In particular,  
		\begin{align*}
			g_\star: C^{m,\alpha}_b(\mathbb{R}^d,\mathbb{R}^d) \rightarrow C^{m,\al}_b(\mathbb{R}^d, G), 
			\quad f \mapsto g_\star(f):= g \circ ( \Id + f)
		\end{align*}
		is continuous.
	\end{theorem}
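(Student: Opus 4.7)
The plan is to induct on $m \ge 1$, estimating the difference $h := g_\star(f_1) - g_\star(f_2)$ by treating the sup-norm $\|h\|_0$ and (in the inductive step) the $C^{m-1,\al}_b$-norm of $dh$ separately. The key new ingredient, beyond what Theorem \ref{comp1} already provides, is an interpolation argument via Lemma \ref{exp} that produces the desired H\"older exponent $\be - \al$. Throughout the proof the hypothesis $f_1, f_2 \in B^{m,\al}(f_0, R)$ enters only through the bound $\|f_i\|_{m,\al} \le \|f_0\|_{m,\al} + R =: \rho$; in particular, since $\|f_1 - f_2\|_{m,\al} \le 2R$, any excess linear factor $\|f_1-f_2\|_{m,\al}$ arising in the estimates can be absorbed to $\|f_1-f_2\|_{m,\al}^{\be-\al}$ at the cost of a constant depending on $R$.

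For the base case $m = 1$, I would apply the chain rule to write
\[
 dh(x) = D(x)(I + df_1(x)) + g'_\star(f_2)(x)\bigl(df_1(x) - df_2(x)\bigr),
\]
where $D(x) := g'(x + f_1(x)) - g'(x+f_2(x))$. The second summand is immediately bounded in $C^{0,\al}_b$ by Proposition \ref{bilinear} combined with a direct $C^{0,\al}$-estimate of $\|g'_\star(f_2)\|_{0,\al}$ in terms of $\|g'\|_{0,\be}$ and $\rho$, yielding a bound with a linear factor $\|f_1 - f_2\|_{1,\al}$, which is absorbed as above. For the first summand, I would first establish the two extreme bounds $\|D\|_0 \le \|g\|_{1,\be}\|f_1-f_2\|_0^\be$ (from $g' \in C^{0,\be}_b$) and $[D]_{0,\be} \le C(\rho)\|g\|_{1,\be}$ (using that $f_i$ is Lipschitz with constant $\le \rho$), and then apply Lemma \ref{exp} with exponents $0 < \al < \be$ to obtain
\[
  [D]_{0,\al} \le 2\|D\|_0^{(\be-\al)/\be} [D]_{0,\be}^{\al/\be} \le C' \|g\|_{1,\be}\|f_1-f_2\|_{1,\al}^{\be-\al}.
\]
Multiplying by $I + df_1$ via Proposition \ref{bilinear} then controls the first summand, and the sup-norm estimate $\|h\|_0 \le C''\|g\|_{1,\be}\|f_1-f_2\|_{1,\al}$ (from the mean value inequality following Lemma \ref{ftc}) completes the base case.

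For the inductive step $m \to m+1$, given $g \in C^{m+1,\be}_b$, the chain rule yields $dh = (g'_\star(f_1) - g'_\star(f_2))(I + df_1) + g'_\star(f_2)(df_1 - df_2)$. The inductive hypothesis, applied to $g' \in C^{m,\be}_b$ with the same $f_0, R$, bounds $\|g'_\star(f_1) - g'_\star(f_2)\|_{m,\al}$ by $C\|g\|_{m+1,\be}\|f_1-f_2\|_{m+1,\al}^{\be-\al}$, while Theorem \ref{comp1} controls $\|g'_\star(f_2)\|_{m,\al}$ in terms of $\|g\|_{m+1,\be}$ and $\rho$ (using Lemma \ref{inclusion} to pass from $\be$ to $\al$ on $g'$). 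Proposition \ref{bilinear} then delivers the $C^{m,\al}_b$-estimate for $dh$, and combined with the sup-norm bound as in the base case this yields $\|h\|_{m+1,\al} \le M\|g\|_{m+1,\be}\|f_1-f_2\|_{m+1,\al}^{\be-\al}$.

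The main obstacle is the interpolation step in the base case: one cannot directly interpolate $h$ between $\|h\|_m$ and $\|h\|_{m,\be}$, because $g_\star(f_i)$ need not lie in $C^{m,\be}_b$ when $f_i$ is only in $C^{m,\al}_b$ (the $m$-th derivative of $g_\star(f_i)$ involves products with $d^k f_i$ for $k \ge 2$, which is only $\al$-H\"older). The interpolation therefore has to be performed at the level of the composition factor $D$, where the required $C^{0,\be}$-control survives because $g'$ itself is $C^{0,\be}_b$. Once this observation is in place, the inductive step is essentially bookkeeping with Proposition \ref{bilinear} and Theorem \ref{comp1}.
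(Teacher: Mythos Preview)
Your proposal is correct and follows essentially the same route as the paper's proof: induction on $m$, the chain-rule decomposition of $dh$ into a ``$D\cdot(\mathbb{1}+df_i)$'' piece and a ``$dg\circ(\Id+f_j)\cdot(df_1-df_2)$'' piece (the paper just groups the bilinear identity the other way), and---crucially---the interpolation via Lemma~\ref{exp} applied to $D = (dg)_\star(f_1)-(dg)_\star(f_2)$ between its $C^0_b$-norm and its $C^{0,\be}_b$-norm. Your closing remark about why interpolation must occur at the level of $D$ rather than $h$ is exactly the point.
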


\begin{comment}
We also need the following particular case.\sr{it seems that this is no longer needed: REMOVE!?}

	\begin{theorem}[{\cite[6.2]{LlaveObaya99}}]
		\label{6.2second}
		Let $\al, \be \in (0,1]$.
		Let $g \in C^{1,\be}_b (\mathbb{R}^d, G)$ where $G$ is some Banach space. 
		Then $g_\star: C^{0,\alpha}_b(\mathbb{R}^d,\mathbb{R}^d) \rightarrow C^{0,\al}_b(\mathbb{R}^d, G)$
		is continuous.
		We have 
		\begin{equation} \label{eq:6.2second0}
			\|g_\star(f)\|_{0,\al} \le 2  \|g\|_{1}  (1 + \|f\|_{0,\al}), \quad f \in C^{0,\al}_b(\R^d,\R^d).
		\end{equation}
		For every $f_0 \in C^{0,\alpha}_b(\R^d,\R^d)$ and  $R>0$, there is $M = M(\|f_0\|_{0,\al},R)$ such that for all $f_1,f_2 \in B^{0,\al}(f_0,R)$
			\begin{equation} \label{eq:6.2second}
				\|g_\star(f_1)-g_\star(f_2)\|_{0,\al} \le M\|g\|_{1,\beta} \|f_1-f_2\|_{0,\al}^{\beta}.
			\end{equation}	
	\end{theorem}	
\end{comment}

	It follows that composition is even jointly continuous.

	\begin{corollary}
		\label{jcont}
		Let $m \in \N_{\ge 1}$, $0< \alpha < \beta \leq 1$ and $G$ some Banach space. 
		Then, for all $f_0 \in C^{m,\alpha}_b(\R^d,\R^d)$, $g_0 \in C^{m,\be}_b(\R^d,G)$, $R>0$,   
		$f_1, f_2 \in B^{m,\alpha}(f_0,R)$, and $g_1,g_2 \in B^{m,\be}(g_0,R)$,
			\begin{equation} \label{eq:jcont}
				\|g_1 \o (\Id + f_1)-g_2\o  (\Id + f_2)\|_{m,\al} 
				\le M\big(\|g_1 - g_2\|_{m,\al} + \|f_1-f_2\|_{m,\al}\big)^{\beta - \al},
			\end{equation}
		where $M = M(m, \|f_0\|_{m,\al},\|g_0\|_{m,\be},R)$.
		In particular,
		\begin{align*}
			\comp:~ &C^{m,\beta}_b(\R^d, G)\times C^{m,\alpha}_b(\R^d,\R^d) \rightarrow C^{m,\alpha}_b(\R^d,G), 
			\quad (g,f)	\mapsto g\circ(\Id+f)	
		\end{align*}
		is continuous.
	\end{corollary}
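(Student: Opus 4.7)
The plan is to split the difference additively and then invoke the two preceding theorems on the two resulting pieces. Write
\[
g_1 \o (\Id+f_1) - g_2 \o (\Id+f_2) = \bigl[g_1 \o (\Id+f_1) - g_1 \o (\Id+f_2)\bigr] + \bigl[(g_1-g_2)\o(\Id+f_2)\bigr].
\]
For the first bracket, apply \Cref{6.2} with $g = g_1$: since $g_1 \in B^{m,\be}(g_0,R)$ gives $\|g_1\|_{m,\be} \le \|g_0\|_{m,\be}+R$, and both $f_1,f_2 \in B^{m,\al}(f_0,R)$, we obtain
\[
\|g_1 \o (\Id+f_1) - g_1 \o (\Id+f_2)\|_{m,\al} \le M_1 \|f_1-f_2\|_{m,\al}^{\be-\al},
\]
with $M_1$ depending only on $m$, $\|f_0\|_{m,\al}$, $\|g_0\|_{m,\be}$, $R$.

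For the second bracket, note that by \Cref{inclusion} we have $g_1 - g_2 \in C^{m,\al}_b(\R^d,G)$, so \Cref{comp1} applies with this function and $f = f_2$:
\[
\|(g_1-g_2)\o(\Id+f_2)\|_{m,\al} \le M\|g_1-g_2\|_{m,\al}(1+\|f_2\|_{m,\al})^{m+\al} \le M_2 \|g_1-g_2\|_{m,\al},
\]
where $M_2$ absorbs the bound on $\|f_2\|_{m,\al}$ coming from $f_2 \in B^{m,\al}(f_0,R)$.

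The only remaining issue is to merge the two estimates into a single $(\be-\al)$-H\"older bound in the sum $\|g_1-g_2\|_{m,\al}+\|f_1-f_2\|_{m,\al}$. Since $f_1,f_2$ lie in a ball of radius $R$ and $g_1,g_2$ in a $C^{m,\be}$-ball of radius $R$ (and hence, by \Cref{inclusion}, in a $C^{m,\al}$-ball of radius $2R$), both quantities are bounded by some $K = K(R)$; using $0 < \be-\al \le 1$ we have the elementary inequalities $t \le K^{1-(\be-\al)} t^{\be-\al}$ for $0 \le t \le K$ and $s^{\be-\al} \le (s+t)^{\be-\al}$ for $s,t\ge0$. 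Applying these to absorb the linear factor of $\|g_1-g_2\|_{m,\al}$ and to combine the two terms yields \eqref{eq:jcont} with a constant $M$ depending only on $m$, $\|f_0\|_{m,\al}$, $\|g_0\|_{m,\be}$, $R$. This estimate clearly implies joint continuity of $\comp$.

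The main obstacle is not conceptual but organizational: one must play off the two regularity scales against each other, using the stronger $C^{m,\be}$-regularity of $g_1$ only in the left-translation step (where it yields the H\"older exponent $\be-\al$ in $f$) and using the weaker $C^{m,\al}$-information on $g_1-g_2$ in the right-translation step (where a linear rate in $g$ already suffices, and is subsequently boosted to a $(\be-\al)$-rate only via the local boundedness of the norms).
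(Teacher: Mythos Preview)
Your proof is correct and follows essentially the same approach as the paper. The paper uses the decomposition
\[
g_1 \o (\Id+f_1) - g_2 \o (\Id+f_2) = f_1^\star (g_1  - g_2) + (g_2)_\star (f_1) - (g_2)_\star (f_2),
\]
which differs from yours only by swapping the roles of the indices $1$ and $2$; both splittings reduce the estimate to one application of \Cref{comp1} and one of \Cref{6.2}, and your explicit handling of the merging step (boosting the linear term in $\|g_1-g_2\|_{m,\al}$ to a $(\be-\al)$-H\"older term via local boundedness) is exactly what the paper leaves implicit.
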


We will also need the following result on $C^1$ left translations.

	\begin{theorem}[{\cite[6.7]{LlaveObaya99}}]
	\label{6.7}
		Let $m \in \N_{\ge 1}$ and $\al,\be \in (0,1]$, $\al < \be$.
		Let  $g \in C^{m+1,\beta}_b (\mathbb{R}^d, G)$ where $G$ is some Banach space.
		Then $g_\star: C^{m,\alpha}_b(\mathbb{R}^d,\mathbb{R}^d) \rightarrow C^{m,\al}_b(\mathbb{R}^d, G)$
		is continuously differentiable.
	\end{theorem}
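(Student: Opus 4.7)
The plan is to produce the Fr\'echet derivative explicitly and then verify its regularity. At $f \in C^{m,\al}_b(\R^d,\R^d)$ in direction $h \in C^{m,\al}_b(\R^d,\R^d)$ the natural candidate is the pointwise formula
\[
	dg_\star(f)(h)(x) := dg(x+f(x)) \cdot h(x),
\]
that is, $dg_\star(f)(h) = b_\ast\big((dg) \o (\Id + f),\, h\big)$ where $b: L(\R^d,G) \times \R^d \to G$ is the continuous bilinear evaluation. Since $g \in C^{m+1,\be}_b$ implies $dg \in C^{m,\be}_b(\R^d,L(\R^d,G))$, \Cref{comp1} shows $(dg) \o (\Id + f) \in C^{m,\be}_b \subseteq C^{m,\al}_b$, and \Cref{bilinear} then certifies $dg_\star(f)$ as a continuous linear operator $C^{m,\al}_b(\R^d,\R^d) \to C^{m,\al}_b(\R^d,G)$.

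To check the defining estimate of Fr\'echet differentiability, I would apply the pointwise fundamental theorem of calculus to write
\[
g_\star(f+h) - g_\star(f) - dg_\star(f)(h)
= \int_0^1 b_\ast\!\big((dg)\o(\Id + f + th) - (dg)\o(\Id + f),\, h\big)\, dt.
\]
By \Cref{6.2} applied to the outer function $dg \in C^{m,\be}_b$ -- which is where the hypothesis $g \in C^{m+1,\be}_b$ together with $\al < \be$ becomes essential -- the $(dg)_\star$-difference is bounded in $\|\cdot\|_{m,\al}$ by a constant times $\|h\|_{m,\al}^{\be-\al}$. Multiplying by $\|h\|_{m,\al}$ via \Cref{bilinear} and integrating in $t$, this produces an $O(\|h\|_{m,\al}^{1+\be-\al})$ remainder. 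Since $\be > \al$, dividing by $\|h\|_{m,\al}$ yields the desired $o(1)$ as $h \to 0$, so $g_\star$ is Fr\'echet differentiable with derivative $dg_\star$.

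Continuity of $f \mapsto dg_\star(f)$ in the operator norm follows from the same ingredients: \Cref{bilinear} combined with \Cref{6.2} gives
\[
\|dg_\star(f_1) - dg_\star(f_2)\|_{\on{op}} \le 2^{m+1}\,\|(dg)\o(\Id+f_1) - (dg)\o(\Id+f_2)\|_{m,\al} \le C\|f_1 - f_2\|_{m,\al}^{\be-\al},
\]
so $g_\star$ is in fact $C^{0,\be-\al}$, in particular $C^1$. The only delicate point in the plan is that $g$ must be \emph{one order more regular} than the target class $C^{m,\al}_b$, ensuring $dg \in C^{m,\be}_b$ with $\be > \al$; this strict inequality is exactly what \Cref{6.2} needs in order to produce quantitative H\"older continuity of the inner composition. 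Absent the gap $\be - \al > 0$ one only recovers continuity of the candidate operator, not differentiability of the map.
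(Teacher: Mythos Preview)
Your proposal is correct and follows essentially the same route as the paper: the candidate derivative $dg_\star(f)(h)=b_\ast((dg)\o(\Id+f),h)$, the integral remainder estimated via \Cref{6.2} applied to $dg$ together with \Cref{bilinear}, and the resulting $O(\|h\|_{m,\al}^{1+\be-\al})$ bound are exactly what the paper does. The only cosmetic difference is that the paper packages the continuity of $f\mapsto dg_\star(f)$ by writing $d(g_\star)=l\o(dg)_\star$ with $l$ linear continuous, whereas you verify it by a direct estimate---same ingredients, same outcome.
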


Together with \Cref{ftc}, \Cref{6.7} implies Lipschitz continuity of the left translation in the following cases; 
but see also \Cref{lefttranslationLip} below.

\begin{corollary} \label{Lipschitz}
	Let $m \in \N_{\ge 1}$ and $0 <\al < \be \le 1$.
	Let $g \in C^{m+1,\beta}_b(\mathbb{R}^d,G)$. 
	Then 
	$g_\star: C^{m,\alpha}_b(\mathbb{R}^d,\mathbb{R}^d) \rightarrow C^{m,\alpha}_b(\mathbb{R}^d,G)$
	satisfies for all $f_1,f_2 \in C^{m,\alpha}_b(\mathbb{R}^d,\mathbb{R}^d)$, 
	\begin{align*}
	\|g_\star( f_1) - g_\star(f_2)\|_{m,\alpha}  
	\leq  M \|g\|_{m+1,\beta} (1 + \max_{i=1,2}\|f_i\|_{m,\alpha})^{m+1}  \|f_1-f_2\|_{m,\alpha}.
	\end{align*} 
\end{corollary}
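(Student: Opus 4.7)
The plan is to invoke the $C^1$ regularity of $g_\star$ from \Cref{6.7} and then use the fundamental theorem of calculus (\Cref{ftc}) along the straight line segment joining $f_2$ to $f_1$. The main task is to bound the operator norm of the derivative $d(g_\star)(f)$ on $C^{m,\alpha}_b$ uniformly for $f$ in a bounded set.

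First I would identify the derivative. By the chain rule, for any $f \in C^{m,\alpha}_b(\R^d,\R^d)$ and $h \in C^{m,\alpha}_b(\R^d,\R^d)$, the $C^1$-mapping $g_\star$ from \Cref{6.7} satisfies
\[
d(g_\star)(f)(h) = (dg \o (\Id+f)) \cdot h,
\]
where the product is the pointwise evaluation of the linear map $dg(\cdot) \in L(\R^d,G)$ on $h(\cdot) \in \R^d$, viewed as a continuous bilinear operation $b : L(\R^d,G) \times \R^d \to G$ with $\|b\| \le 1$. Applying \Cref{bilinear} to $b$ gives
\[
\|d(g_\star)(f)(h)\|_{m,\al} \le 2^{m+1} \|dg \o (\Id+f)\|_{m,\al}\,\|h\|_{m,\al}.
\]
By \Cref{comp1} applied to $dg \in C^{m,\al}_b(\R^d,L(\R^d,G))$ (and $\al$ in place of $\be$) we have
\[
\|dg \o (\Id+f)\|_{m,\al} \le M(m) \|dg\|_{m,\al} (1+\|f\|_{m,\al})^{m+\al},
\]
and by \Cref{inclusion} combined with $\|dg\|_{m,\al} \le \|g\|_{m+1,\al}$ we get $\|dg\|_{m,\al} \le 2\|g\|_{m+1,\be}$. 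Combining these three estimates yields a constant $M' = M'(m)$ such that
\[
\|d(g_\star)(f)\|_{L(C^{m,\al}_b,C^{m,\al}_b)} \le M' \|g\|_{m+1,\be} (1+\|f\|_{m,\al})^{m+1}
\]
uniformly in $f$, where we absorbed $m+\al$ into $m+1$ since $(1+\|f\|_{m,\al}) \ge 1$.

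Next, fix $f_1,f_2 \in C^{m,\al}_b(\R^d,\R^d)$ and set $\ga(t) := f_2 + t(f_1-f_2)$ for $t \in [0,1]$. Since $g_\star$ is $C^1$, the composition $t \mapsto g_\star(\ga(t))$ is $C^1$ into the Banach space $C^{m,\al}_b(\R^d,G)$ with derivative $d(g_\star)(\ga(t))(f_1-f_2)$. By \Cref{ftc},
\[
g_\star(f_1) - g_\star(f_2) = \int_0^1 d(g_\star)(\ga(t))(f_1-f_2) \, dt,
\]
and the Bochner triangle inequality, together with the bound $\|\ga(t)\|_{m,\al} \le \max_{i=1,2}\|f_i\|_{m,\al}$, gives
\[
\|g_\star(f_1) - g_\star(f_2)\|_{m,\al} \le M' \|g\|_{m+1,\be}\bigl(1+\max_{i=1,2}\|f_i\|_{m,\al}\bigr)^{m+1} \|f_1-f_2\|_{m,\al},
\]
which is the claimed estimate (after renaming $M'$ to $M$). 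No serious obstacle is expected; the key point is that \Cref{6.7} already provides the $C^1$ regularity, so only the quantitative control of the derivative is needed, and this follows from the product and composition estimates established earlier.
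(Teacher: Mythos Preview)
Your proof is correct and follows essentially the same route as the paper: both invoke \Cref{6.7} to identify $d(g_\star)(f)(h) = (dg\circ(\Id+f))\cdot h$, integrate along the line segment between $f_1$ and $f_2$ via \Cref{ftc}, and bound the integrand using \Cref{bilinear} and \Cref{comp1}. The only cosmetic difference is that you isolate the operator-norm bound for $d(g_\star)(f)$ before integrating, whereas the paper estimates the integrand directly inside the integral.
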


\begin{remark}
	Let us stress the fact that left translation ceases to be continuous, resp.\ differentiable, if in 
	\Cref{6.2}, resp.\ \Cref{6.7}, $g$ is merely of class $C^{m,\al}_b$, resp.\ $C^{m+1,\al}_b$; see \cite{LlaveObaya99} 
	and also \Cref{disc}.		
\end{remark}

	We shall make frequent use of the Fa\`a di Bruno formula for Banach spaces:
		Let $E,F,G$ be Banach spaces, let $f: E \supseteq U  \rightarrow F$ and $g: F \supseteq V \rightarrow G$ 
		be $k$ times Fr\'echet differentiable, and assume $f(U)\subseteq V$. Then $g\circ f:U \rightarrow G$ is $k$ 
		times Fr\'echet differentiable, and for all $x \in U$,		
		\begin{align}
			\label{eq:FaadiBruno}
				d^k(g\circ f)(x) = \on{sym}
				\sum_{l=1}^{k} \sum_{ \ga \in \Ga(l,k)} c_{\ga} g^{(l)}(f(x))\left(f^{(\ga_1)}(x), \dots, f^{(\ga_l)}(x)\right),
		\end{align}
		where $\Ga(l,k) := \{\ga \in \N_{>0}^l : |\ga| = k\}$,
		$c_{\ga} := \frac{k!}{l! \ga!}$, and $\on{sym}$ denotes symmetrization of multilinear mappings.\par
		Fa\`a di Bruno's formula applied to a function $h:U \rightarrow H$ of the form $h(x)=b(f(x),g(x))$, where $f,g$ are $k$ times Fr\'echet differentiable functions defined on a common domain $U \subseteq E$ and $b:F\times G \rightarrow H$ is a continuous bilinear map, gives 
		\begin{align}
		\label{eq:productrule}
			d^kh(x) = \on{sym}
			\sum_{l=0}^{k} \binom{k}{l} b(f^{(l)}(x),g^{(k-l)}(x)).
		\end{align}
		This formula is of particular use when $h(x)=dg(x)(f(x))$, where $f,g:\R^d \rightarrow \R^d$, i.e. the bilinear map takes the form $b:L(\R^d,\R^d)\times \R^d \rightarrow \R^d$, $(l,x)\mapsto l(x)$.
	
	\begin{remark}
		\label{compcl}
		Fa\`a di Bruno's formula \eqref{eq:FaadiBruno} implies that for $f:\R^d \rightarrow \R^d$ and $g:\R^d \rightarrow G$ 
		both in $C^k_0$, we have $g \circ(\Id+f) \in C^{k}_0(\R^d,G)$. 
		So the stated regularity results for the composition hold as well for $C^{m,\al}_b$, etc., replaced by $C^{m,\al}_0$, 
		etc.
	\end{remark}

%-----------------------------------------------------------------------------------------------------------------------
\subsection{Convenient calculus} \label{ssec:reviewconvenient}
%-----------------------------------------------------------------------------------------------------------------------

Occasionally, we shall use some tools from \emph{convenient calculus} which extends differential calculus 
beyond Banach spaces; the main reference is \cite{KM97}, see also \cite{FK88} and the three appendices in 
\cite{KMRc}.   
Let us briefly describe the concepts and results we will need.

Let $E$ be a locally convex vector space. A curve $c: \mathbb R\to E$ is called $C^\infty$ if all derivatives exist and are 
continuous. It can be 
shown that the set $C^\infty(\mathbb R,E)$ of $C^\infty$-curves in $E$ does not depend on the locally convex 
topology of $E$, only on its associated bornology.

The \emph{$c^\infty$-topology} on $E$ is the final topology with respect to $C^\infty(\R,E)$; 
equivalently it is the final topology with respect to all Lipschitz curves or all Mackey-convergent sequences in $E$.
In general 
the $c^\infty$-topology is finer 
than the given locally convex topology, and it is not a vector space 
topology; for Fr\'echet spaces the topologies coincide.

A locally convex vector space 
$E$ is said to be a \emph{convenient 
vector space} if it is Mackey-complete; equivalently, a 
curve $c:\mathbb R\to E$ is $C^\infty$ if and only if $\la\o c$ is 
$C^\infty$ for all continuous (equivalently bounded) linear functionals $\la$ on $E$.

Let $E$, $F$, and $G$ be convenient vector spaces, 
and let $U\subseteq E$ be $c^\infty$-open. 
A mapping $f:U\to F$ is called  
$C^\infty$, if $f\o c\in C^\infty(\mathbb R,F)$ for all 
$c\in C^\infty(\mathbb R,U)$.
For mappings on Fr\'echet spaces this notion of smoothness 
coincides with all other reasonable definitions.
Multilinear mappings are $C^\infty$ if and only if they are 
bounded. The space $C^\infty(U,F)$ with the initial structure with respect to all mappings $f \mapsto \la \o f \o c$, 
$c \in C^\infty(\R,E)$ and $\la \in E^*$,
is again convenient. The exponential law holds: For $c^\infty$-open $V\subseteq F$, 
\[
C^\infty(U,C^\infty(V,G)) \cong C^\infty(U\times V, G)
\]
is a linear diffeomorphism of convenient vector spaces. 
A linear mapping $f:E\to C^\infty(V,G)$ is $C^\infty$ (bounded) if 
and only if $\ev_v \o f : E \to G$ is $C^\infty$ for all $v \in V$.

There are, however, $C^\infty$-mappings which are not continuous with respect to the underlying 
locally convex topology; clearly they are continuous for the $c^\infty$-topology.

Beside the class $C^\infty$ due to \cite{Frolicher81}, \cite{Kriegl82}, \cite{Kriegl83}, 
convenient calculus was developed for 
the holomorphic class \cite{KrieglNel85}, 
the real analytic class \cite{KrieglMichor90}, 
and all reasonable ultradifferentiable classes \cite{KMRc}, \cite{KMRq}, \cite{KMRu}, \cite{Schindl14a}.  

For the classes $C^{k,\al}$ ($k \in \N$, $\al \in (0,1]$) it was established in a weaker sense (without general exponential law) 
by \cite{FroelicherGisinKriegl83} (for $\al =1$) 
and by \cite{FaureFrolicher89}, \cite{Faure91}.  
Let $E$, $F$ be convenient vector spaces, 
and let $U\subseteq E$ be $c^\infty$-open. 
A curve $c : \R \to F$ is \emph{locally $\al$-H\"older continuous}, we write $c \in C^{0,\al}(\R,F)$, if 
for each bounded interval $I \subseteq \R$,
\[
	\Big\{\frac{c(t) - c(s)}{|t-s|^\al} : t,s \in I,\, t \ne s \Big\}
\]
is bounded in $F$. A curve $c : \R \to F$ is $C^{k,\al}$, i.e., $c \in C^{k,\al}(\R,F)$, if 
all 
derivatives up to order $k$ exist and are locally $\al$-H\"older continuous.
A mapping $f:U\to F$ between convenient vector spaces is called  
$C^{k,\al}$, if $f\o c\in C^{k,\al}(\mathbb R,F)$ for all 
$c\in C^\infty(\mathbb R,U)$. 
If $E$ and $F$
are Banach spaces, then $f$ is $C^{0,\al}$ in this senses if and only if it is in the sense of 
\Cref{ssec:Hoelder}, i.e., $\|f(x) - f(x)\|/\|x-y\|^\al$ is locally bounded; 
see \cite{Faure91}, \cite[12.7]{KM97}, or \cite[Lemma]{KMR}, and note that 
this is a special case of 
\Cref{lem:C0om} below. 

%-----------------------------------------------------------------------------------------------------------------------
\subsection{An application of convenient calculus}
%-----------------------------------------------------------------------------------------------------------------------

We finish this section with a result which is not contained in \cite{LlaveObaya99}: 
if $\al = \be$ in \Cref{6.7}, resp.\ \Cref{Lipschitz}, the left translation $g_\star$ is still locally Lipschitz. 
Of course, \Cref{compcl} applies to this theorem as well. In contrast to \Cref{Lipschitz}, we do not get an 
explicit bound for the Lipschitz constant.

	\begin{theorem} \label{lefttranslationLip}
		Let $m \in \N_{\ge 1}$ and $0 <\al  \le 1$.
		Let $g \in C^{m+1,\al}_b(\mathbb{R}^d,\mathbb{R}^d)$.  
		Then 
		$g_\star: C^{m,\alpha}_b(\mathbb{R}^d,\mathbb{R}^d) \rightarrow C^{m,\alpha}_b(\mathbb{R}^d,\mathbb{R}^d)$
		is locally Lipschitz. 
	\end{theorem}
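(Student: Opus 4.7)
The approach is to invoke the convenient-calculus characterization recalled in \Cref{ssec:reviewconvenient}: since $C^{m,\al}_b(\R^d,\R^d)$ is a Banach space, it suffices to prove that $g_\star$ is $C^{0,1}$ in the convenient sense, i.e.\ that $g_\star \o c : \R \to C^{m,\al}_b(\R^d,\R^d)$ is locally Lipschitz for every $c \in C^\infty(\R, C^{m,\al}_b(\R^d,\R^d))$. This reduction bypasses the failure of continuity of composition at matching H\"older exponents: smooth curves into a Banach space are automatically classical $C^\infty$, hence locally Lipschitz, which is what we will feed into a mean-value argument.

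Fix such a smooth curve $c$ and a bounded interval $I \subseteq \R$, and set $\gamma(u,x) := g(x + c(u)(x))$. Since $c$ is $C^1$ into $C^{m,\al}_b$, \Cref{ftc} applied pointwise in $x$ yields
\[
	\gamma(t,x) - \gamma(s,x) = \int_s^t dg(x + c(u)(x)) \cdot c'(u)(x) \, du, \qquad t,s \in I,~ x \in \R^d.
\]
Denote the integrand by $F_u(x)$. Because $dg \in C^{m,\al}_b(\R^d, L(\R^d,\R^d))$, \Cref{comp1} gives $(dg)\o(\Id + c(u)) \in C^{m,\al}_b$, and the bilinear \Cref{bilinear} applied to the evaluation pairing $L(\R^d,\R^d) \times \R^d \to \R^d$ then yields
\[
	\|F_u\|_{m,\al} \le 2^{m+1} M \|dg\|_{m,\al} (1 + \|c(u)\|_{m,\al})^{m+\al} \|c'(u)\|_{m,\al}.
\]
The norms $\|c(u)\|_{m,\al}$ and $\|c'(u)\|_{m,\al}$ are bounded uniformly on $I$ since $c,c'$ are continuous into $C^{m,\al}_b$, so $K := \sup_{u\in I}\|F_u\|_{m,\al} < \infty$.

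The remaining (and key) step is to transfer the pointwise integral identity into a $C^{m,\al}_b$-norm bound on $\gamma(t,\cdot) - \gamma(s,\cdot)$. For each $0 \le k \le m$, Fa\`a di Bruno together with the joint continuity in $(u,x)$ of $c(u)(x)$, $c'(u)(x)$, and all their $x$-derivatives up to order $m$ (inherited from the convergence of $c$, $c'$ in $C^m_b$) implies that $\partial_x^k F_u(x)$ is jointly continuous in $(u,x)$ and uniformly bounded on $I \times \R^d$. Standard differentiation under a scalar Riemann integral then gives $\partial_x^k(\gamma(t,\cdot) - \gamma(s,\cdot))(x) = \int_s^t \partial_x^k F_u(x)\,du$, and applying the same commutation to the difference $\partial_x^m F_u(x) - \partial_x^m F_u(y)$ controls the $\al$-H\"older seminorm. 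Combining these sup- and seminorm estimates yields $\|g_\star(c(t)) - g_\star(c(s))\|_{m,\al} \le K\,|t - s|$ for all $t,s \in I$, as required. The main obstacle is precisely this final transfer: the standard Bochner-integral route is blocked because $u \mapsto F_u$ need not be continuous, nor essentially separably valued, into the non-separable space $C^{m,\al}_b$; we sidestep this by integrating only in the pointwise scalar sense and exploiting the uniform H\"older control of $F_u$ in $x$.
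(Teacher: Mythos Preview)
Your proposal is correct and follows the same overall strategy as the paper: reduce to the convenient-calculus criterion that $g_\star$ send $C^\infty$-curves to $C^{0,1}$-curves, then use the pointwise integral representation $\gamma(t,x)-\gamma(s,x)=\int_s^t F_u(x)\,du$ with $F_u(x)=dg(x+c(u)(x))\cdot c'(u)(x)$, and bound the $C^{m,\al}_b$-norm of the difference by $K|t-s|$ via a uniform bound on $\|F_u\|_{m,\al}$, circumventing the non-existence of the Bochner integral in $C^{m,\al}_b$.

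The execution, however, is more economical than the paper's. The paper expands $d_x^k F_u(x)$ explicitly by Fa\`a di Bruno and the product rule, and then handles the H\"older seminorm of the $m$-th derivative by a telescoping sum over the tensors $A^{\ga,i}$, $B^{\ga,h}$, with separate case distinctions for $l<m$ versus $l=m$ (the latter being where $\|g\|_{m+1,\al}$ enters). You instead observe that $F_u$ is precisely $b_*\big((dg)\o(\Id+c(u)),\,c'(u)\big)$ for the evaluation bilinear map, so \Cref{comp1} applied to $dg\in C^{m,\al}_b$ together with \Cref{bilinear} immediately yield the uniform bound $\sup_{u\in I}\|F_u\|_{m,\al}<\infty$; this single estimate simultaneously controls all $\partial_x^k$-norms and the H\"older seminorm, and the pointwise-integral transfer is then routine. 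What you gain is brevity and modularity; what the paper's hands-on approach gains is that it makes fully visible where the extra derivative on $g$ is consumed (namely in the $l=m$ term of the telescoped H\"older estimate), which is instructive but not logically necessary once \Cref{comp1} and \Cref{bilinear} are available.
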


\begin{proof}
	It suffices to check that $g_\star$ maps $C^\infty$-curves to $C^{0,1}$-curves; cf.\ \Cref{ssec:reviewconvenient}. 
	That $t \mapsto  f(t, \cdot )$ is $C^\infty$ in $C^{m,\alpha}_b(\mathbb{R}^d,\mathbb{R}^d)$ 
	means, by \cite[4.1.19]{FK88}, that, for all $k \in \N$,  $\|\p_t^k  f(t, \cdot )\|_{m,\al}$ is locally bounded in $t$.	
	
	Let $h(t,x) := g(x+f(t,x))$. Then, if $F := \Id +f$,  
	\[
		h(t,x) - h(s,x) = \int_s^t \p_\ta h(\ta,x)\, d \ta = \int_s^t dg(F(\ta,x)) \p_\ta f(\ta,x)\, d \ta 
	\]
	and, by \eqref{eq:productrule},
	\begin{align*}
		d^k_xh(t,x) - d^k_xh(s,x) 
		&= \int_s^t d_x^k \big( dg(F(\ta,x)) \p_\ta f(\ta,x) \big) \, d \ta
		\\
		&= \on{sym}\sum_{j=0}^k \binom{k}{j} \int_s^t d_x^{j} \big( dg(F(\ta,x)) \big) \p_\ta d_x^{k-j} f(\ta,x)  \, d \ta.
	\end{align*}
	With Fa\`a di Bruno's formula \eqref{eq:FaadiBruno},
	\begin{align*}
		d_x^{j} \big( dg(F(\ta,x)) \big)  
		&=  \on{sym} \sum_{l=1}^{j} \sum_{\ga \in \Ga(l,j)}
				 c_{\ga} g^{(l+1)}(F(\ta,x))\big(d_x^{\ga_1}F(\ta,x), \dots, d_x^{\ga_l} F(\ta,x), \mathbb 1\big)
	\end{align*}
	it is easy to see that $t \mapsto h(t, \cdot)$ is locally Lipschitz into $C^{m}_b(\R^d,\R^d)$.

	It remains to prove that $t \mapsto h(t, \cdot)$ is locally Lipschitz into $C^{m,\al}_b(\R^d,\R^d)$. To this end we have to show 
	that 
	\[
		\frac{[h(t,\cdot)-h(s,\cdot)]_{m,\al}}{t-s} 
	\]
	is locally bounded, i.e., for each bounded interval $I$, the set
	\begin{align*}
		\Big\{
		\frac{d^{m}_x h(t,x) - d^{m}_x h(t,y) - d^{m}_x h(s,x) + d^{m}_x h(s,y) }{\|x-y\|^\al|t-s|} : 
		x \ne y \in \R^n, s \ne t \in I
		\Big\}
	\end{align*}
	must be bounded. 
	Without loss of generality we can assume that $\|x- y\| \le 1$ and thus $\|x-y\| \le \|x-y\|^\al$; 
	if $\|x- y\| \ge 1$ then the result follows from the fact that $t \mapsto h(t, \cdot)$ is locally Lipschitz 
	into $C^{m}_b(\R^d,\R^d)$.
	Let us define
	\[
		A^{\ga,i} = A^{\ga,i}(x,y) :=  \big(d_x^{\ga_1} F(\ta,x), \dots, d_x^{\ga_i}F(\ta,x), 
		d_x^{\ga_{i+1}}F(\ta,y),
		\dots, d_x^{\ga_l} F(\ta,y)\big)
	\] 
	and 
	\begin{align*}
		B^{\ga,h} := \begin{cases}
			\big(A^{\ga,l},\p_t d_x^{m-j} f(\ta,x)\big) & \text{ if } h= l+1, 
			\\
			\big(A^{\ga,h},\p_t d_x^{m-j} f(\ta,y)\big) & \text{ if } h \le l.
			\\
		\end{cases}	
	\end{align*}
	Then
	\begin{align*}
		\MoveEqLeft
		g^{(l+1)}(F(\ta,x))\big(B^{\ga,l+1}\big) -
		g^{(l+1)}(F(\ta,y))\big(B^{\ga,0}\big) 
		\\ 
		&= g^{(l+1)}(F(\ta,x))(B^{\gamma,l+1}) - g^{(l+1)}(F(\ta,y))(B^{\gamma,l+1})\\
				& \quad + 
				\sum_{h=1}^{l+1}  g^{(l+1)}(F(\ta,y))(B^{\gamma,h}) 
				- g^{(l+1)}(F(\ta,y))(B^{\gamma,h-1}).
	\end{align*}
	For the first summand		
\begin{align*}
	\MoveEqLeft
	\big\| g^{(l+1)}(F(\ta,x))(B^{\gamma,l+1}) - g^{(l+1)}(F(\ta,y))(B^{\gamma,l+1})\big\|_{L_{m}}
	\\
	&\le \big\| g^{(l+1)}(F(\ta,x)) - g^{(l+1)}(F(\ta,y))\big\|_{L_{l+1}} (1 + \|f(\ta,\cdot)\|_{m})^{m} 
	\|\p_t f(\ta,\cdot)\|_{m-j}
	\\
	&\le \begin{cases}
		\| g\|_{m+1} [F(\ta,\cdot)]_{1} \| x- y\|  (1 + \|f(\ta,\cdot)\|_{m})^{m} 
		\|\p_t f(\ta,\cdot)\|_{m} & \text{ if } l < m, \\ 
		\| g\|_{m+1,\al} [F(\ta,\cdot)]_{1}^\al \| x- y\|^\al  (1 + \|f(\ta,\cdot)\|_{m})^{m} 
		\|\p_t f(\ta,\cdot)\|_{m} & \text{ if } l = m.
	\end{cases} 
\end{align*}
For the other summands we observe that, by multilinearity,
\begin{align*}
			g^{(l+1)} (F(\ta,y))(B^{\gamma,h}) 
				- g^{(l+1)}(F(\ta,y))(B^{\gamma,h-1}) = g^{(l+1)}(F(\ta,y)) \big( \sharp \big),
		\end{align*}
where 
\begin{align*}
			\sharp = \big(  \dots, d_x^{\ga_{h-1}} F(\ta,x),
			 d_x^{\ga_{h}}F(\ta,x) - d_x^{\ga_{h}}F(\ta,y), d_x^{\ga_{h+1}} F(\ta,y),\dots \big).
		\end{align*}
		Hence, if $h \le l$,
		\begin{align*}
			\MoveEqLeft
			\big\|g^{(l+1)}(F(\ta,y))(B^{\gamma,h}) 
				- g^{(l+1)}(F(\ta,y))(B^{\gamma,h-1})\big\|_{L_{m+1}}\\
			&\le \| g\|_{m+1} (1 + \|f(\ta,\cdot)\|_{m})^{m-1}
			   \|f(\ta,\cdot)\|_{m,\al}\|x-y\|^\al \|\p_t f(\ta,\cdot)\|_{m},
		\end{align*}
		and, if $h= l+1$,
		\begin{align*}
			\MoveEqLeft
			\big\|g^{(l+1)}(F(\ta,y))(B^{\gamma,h}) 
				- g^{(l+1)}(F(\ta,y))(B^{\gamma,h-1})\big\|_{L_{m+1}}\\
			&\le \| g\|_{m+1} (1 + \|f(\ta,\cdot)\|_{m})^{m}
			   \|\p_t f(\ta,\cdot)\|_{m,\al}\|x-y\|^\al.
		\end{align*}
The theorem follows.
\end{proof}

%-----------------------------------------------------------------------------------------------------------------------
	\section{Groups of H\"older diffeomorphisms} \label{sec:groups}
%-----------------------------------------------------------------------------------------------------------------------

%-----------------------------------------------------------------------------------------------------------------------
	\subsection{The (non-topological) group \texorpdfstring{$\mathcal{D}^{n,\beta}(\R^d)$}{Dnbeta}}
%-----------------------------------------------------------------------------------------------------------------------

	Let $n \in \N_{\ge 1}$ and $\be \in (0,1]$.
	Let us define the set of orientation preserving diffeomorphisms of $\R^d$ 
	which differ from the identity by a $C^{n,\be}_0$-mapping:
	\begin{equation}
	\label{diffgroup}
		\mathcal{D}^{n,\beta}(\mathbb{R}^d):= \big\{ \Ph \in \Id+C^{n,\beta}_0(\mathbb{R}^d,\mathbb{R}^d): 
		\det d\Phi(x) > 0 ~\forall x \in \mathbb{R}^d \big\}.
	\end{equation}
	We will show that $\cD^{n,\be}(\R^d)$ is a group (with respect to composition).

	We endow $\mathcal{D}^{n,\beta}(\R^d)$ with the topology given by the metric
	\[
	d(\Ph_1,\Ph_2):= \|\Ph_1-\Ph_2\|_{n,\beta}
	\]
	and denote by $B^{n,\beta}(\Ph,r)$ the open ball of radius $r$ and center $\Ph$ in $\mathcal{D}^{n,\beta}(\R^d)$. 
	We use the same notation for balls in $C^{n,\beta}_0(\R^d,\R^d)$ which causes no problems since 
	$\Id \not \in C^{n,\beta}_0(\R^d,\R^d)$.

	Since the determinant is multiplicative, it is an easy consequence of \Cref{comp1} that $\mathcal{D}^{n,\beta}(\R^d)$ 
	is a monoid with respect to composition.

	\begin{lemma}
		\label{ndiff}
		$\mathcal{D}^{n,\beta}(\R^d)$ consists of $C^n$-diffeomorphisms of $\R^d$. 
		The first $n$ derivatives of the inverse of an element of $\mathcal{D}^{n,\beta}(\R^d)$ are again globally bounded.
	\end{lemma}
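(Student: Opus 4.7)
Let $\Ph = \Id + f$ with $f \in C^{n,\be}_0(\R^d,\R^d)$ and $\det d\Ph(x) > 0$ for all $x$. Since $f \in C^n_b$ and vanishes at infinity together with its derivatives, the map $\Ph$ is $C^n$ and $d\Ph(x) = \mathbb 1 + df(x)$ is invertible at every point (by the determinant condition), so $\Ph$ is a $C^n$ local diffeomorphism by the inverse function theorem. The plan is to first promote this to a global diffeomorphism via a properness argument, and then to derive global bounds for the derivatives of $\Ph^{-1}$ by combining a global bound on $\|(d\Ph)^{-1}\|$ with Fa\`a di Bruno's formula.

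For the global step I would invoke Hadamard's global invertibility theorem: a $C^1$ local diffeomorphism $\R^d \to \R^d$ that is proper is a diffeomorphism. Properness of $\Ph$ is immediate because $\|\Ph(x)\| \ge \|x\| - \|f\|_0 \to \infty$ as $\|x\| \to \infty$, since $f \in C^{n,\be}_0 \subseteq C^0_b$. Hence $\Ph$ is a $C^n$-diffeomorphism of $\R^d$.

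The next step is to control $\|(d\Ph(x))^{-1}\|$ uniformly in $x$. Since $df \in C^{n-1,\be}_0$, there exists $R > 0$ such that $\|df(x)\|_{L_1} \le 1/2$ for $\|x\| \ge R$; on this region the Neumann series gives $\|(d\Ph(x))^{-1}\|_{L_1} \le 2$. On the compact set $\overline{B(0,R)}$, the continuous function $x \mapsto \det d\Ph(x)$ is strictly positive and attains a positive minimum, so $x\mapsto (d\Ph(x))^{-1}$ is bounded there by continuity. Consequently $\|(d\Ph)^{-1}\|_0$ is finite on all of $\R^d$, and since $d\Ph^{-1}(y) = (d\Ph(\Ph^{-1}(y)))^{-1}$, this yields $\|d\Ph^{-1}\|_0 < \infty$.

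For the higher derivatives, I would proceed by induction on $k = 2,\dots,n$. Differentiating the identity $d\Ph^{-1}(y) \cdot d\Ph(\Ph^{-1}(y)) = \mathbb 1$ and using Fa\`a di Bruno's formula \eqref{eq:FaadiBruno}, one expresses $d^k\Ph^{-1}(y)$ as a universal polynomial in $(d\Ph(\Ph^{-1}(y)))^{-1}$ and in $d^j\Ph(\Ph^{-1}(y)) = d^j f(\Ph^{-1}(y))$ for $2 \le j \le k$, together with lower-order derivatives $d^j \Ph^{-1}$ already known to be bounded. Each factor is bounded on $\R^d$: the factor $(d\Ph)^{-1}$ by the previous step, and each $d^j f$ because $f \in C^{n,\be}_0 \subseteq C^n_b$. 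Hence $\|d^k\Ph^{-1}\|_0 < \infty$ for all $1 \le k \le n$, completing the lemma. The main obstacle here is purely bookkeeping in the induction; the conceptual content is Hadamard's theorem together with the compactness/Neumann decomposition of $\R^d$ into $\overline{B(0,R)}$ and its complement.
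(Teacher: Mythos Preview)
Your proof is correct and follows essentially the same route as the paper: the paper cites Palais's corollary (a $C^1$ map with nonvanishing Jacobian that tends to infinity at infinity is a diffeomorphism), which is precisely the Hadamard-type properness argument you give, and then defers the boundedness of the derivatives of $\Ph^{-1}$ to \cite{MichorMumford13}, whose argument is exactly your Neumann-series/compactness bound on $(d\Ph)^{-1}$ followed by the Fa\`a di Bruno induction. Your version is more self-contained but not conceptually different.
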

	
	\begin{proof}
		Let $\Phi = \Id+\ph \in \mathcal{D}^{n,\beta}(\R^d)$. First we have to make sure that $\Ph$ is bijective. 
		This is an immediate consequence of \cite[Cor. 4.3]{Palais59}, which states that a $C^1$ mapping converging 
		to infinity at infinity with non-vanishing jacobian determinant is already a $C^1$ diffeomorphism. 
		The inverse mapping theorem shows that $\Phi^{-1}$ is actually $C^n$. 
		Boundedness of the first $n$ derivatives of $\Ph^{-1} - \Id$ follows as in \cite[p.~7,8]{MichorMumford13}. 
	\end{proof}

	\begin{lemma}[{\cite[p.~7]{MichorMumford13}}] \label{help0}
		The operator norm of an invertible linear operator $A: \mathbb{R}^d \rightarrow \mathbb{R}^d$ satisfies
		$\|A^{-1}\| \leq |\det A|^{-1} \|A\|^{d-1}$.
	\end{lemma}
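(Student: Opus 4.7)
The plan is to use the singular value decomposition of $A$, which converts the multiplicative inequality into an elementary estimate on products of singular values.

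Let $\sigma_1 \ge \sigma_2 \ge \cdots \ge \sigma_d > 0$ denote the singular values of $A$ (positivity follows from invertibility). First I would recall the three standard identifications:
\[
\|A\| = \sigma_1, \qquad \|A^{-1}\| = \sigma_d^{-1}, \qquad |\det A| = \sigma_1 \sigma_2 \cdots \sigma_d.
\]
Then I would simply compute
\[
\|A^{-1}\| = \frac{1}{\sigma_d} = \frac{\sigma_1 \sigma_2 \cdots \sigma_{d-1}}{\sigma_1 \sigma_2 \cdots \sigma_d} \le \frac{\sigma_1^{d-1}}{|\det A|} = \frac{\|A\|^{d-1}}{|\det A|},
\]
using $\sigma_i \le \sigma_1 = \|A\|$ for $i = 1, \dots, d-1$ in the middle estimate. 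This yields the claim.

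An alternative I would have in mind, in case a more self-contained route is preferred, is via the adjugate formula $A^{-1} = (\det A)^{-1} \on{adj}(A)$: each entry of $\on{adj}(A)$ is (up to a sign) a $(d-1) \times (d-1)$ minor of $A$, and Hadamard's inequality bounds the operator norm of such a minor matrix by a suitable power of $\|A\|$. The SVD proof is cleaner, though, and there is no serious obstacle in either approach — this is a one-line estimate once the right representation of $\|A\|$, $\|A^{-1}\|$, and $\det A$ is used.
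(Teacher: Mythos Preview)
Your proof via the singular value decomposition is correct and is the standard clean argument for this inequality. Note that the paper does not actually supply its own proof of this lemma; it merely quotes the result from \cite[p.~7]{MichorMumford13}, so there is nothing in the paper to compare your argument against beyond confirming that your reasoning establishes exactly the stated bound.
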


	\begin{lemma}
		\label{help}
		Let $\Phi_0 = \Id + \ph_0 \in  \mathcal{D}^{n,\beta}(\R^d)$. Then: 
		\begin{enumerate}
			\item $\varepsilon := \inf_{x\in \R^d} \det d\Ph_0(x) >0$.
			\item There is $\delta > 0$ such that   
				$\inf_{x\in \R^d} \det d\Ph(x) \geq \varepsilon/2$ for all $\Ph \in \Id+ B^{n,\be}(\ph_0,\de)$.
			\item There are $\delta, C > 0$ such that  
			$\sup_{x \in \R^d}\|d \Ph^{-1}(x)\| \le C$ for all $\Ph \in B^{n,\be}(\Ph_0,\de)$.
		\end{enumerate}
	\end{lemma}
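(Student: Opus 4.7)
The plan is to handle the three claims in sequence, using only continuity arguments, Lemma \ref{help0}, and the fact that $\ph_0 \in C^{n,\beta}_0(\R^d,\R^d)$ means all derivatives of $\ph_0$ up to order $n$ vanish at infinity.

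For (1), I would exploit that $n\ge 1$ implies $d\ph_0(x) \to 0$ as $\|x\|\to \infty$, so $d\Ph_0(x)=\mathbb{1}+d\ph_0(x)\to \mathbb{1}$ and hence $\det d\Ph_0(x)\to 1$. Pick $R>0$ with $\det d\Ph_0(x)\ge 1/2$ for $\|x\|\ge R$. On the compact set $\overline{B(0,R)}$, the continuous function $\det d\Ph_0$ attains its infimum, which is strictly positive by hypothesis. Combining the two bounds gives $\varepsilon>0$.

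For (2), note that for $\Ph=\Id+\ph \in \Id+B^{n,\beta}(\ph_0,\delta)$,
\[
\sup_{x\in \R^d}\|d\Ph(x)-d\Ph_0(x)\| \le \|\ph-\ph_0\|_{1} \le \|\ph-\ph_0\|_{n,\beta} < \delta.
\]
Set $K := \|\ph_0\|_{1}+1$; then $\|d\Ph(x)\|\le K+\delta$ uniformly in $x$ and $\Ph$. Since the determinant is a polynomial in the matrix entries, it is Lipschitz on the bounded set $\{A\in L(\R^d,\R^d):\|A\|\le K+1\}$ with some Lipschitz constant $L=L(K)$. Hence for $\delta\le 1$,
\[
|\det d\Ph(x)-\det d\Ph_0(x)| \le L\,\|d\Ph(x)-d\Ph_0(x)\| \le L\delta.
\]
Choosing $\delta\le \min\{1,\varepsilon/(2L)\}$ yields $\det d\Ph(x)\ge \varepsilon/2$ for all $x$.

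For (3), I would shrink $\delta$ further if necessary so that (2) holds. Then, for every $x$, Lemma \ref{help0} applied to $A=d\Ph(x)$ gives
\[
\|(d\Ph(x))^{-1}\| \le |\det d\Ph(x)|^{-1}\,\|d\Ph(x)\|^{d-1} \le \frac{2}{\varepsilon}\,(K+\delta)^{d-1}.
\]
Since $\Ph$ is a $C^n$-diffeomorphism by Lemma \ref{ndiff} with $d\Ph^{-1}(y)=(d\Ph(\Ph^{-1}(y)))^{-1}$, the same bound holds for $\|d\Ph^{-1}(y)\|$ uniformly in $y$ and in $\Ph \in B^{n,\beta}(\Ph_0,\delta)$. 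I do not expect any real obstacle: (1) is compactness plus decay, (2) is Lipschitz continuity of the determinant on bounded sets, and (3) is a direct plug-in of Lemma \ref{help0} with the estimates from (1) and (2).
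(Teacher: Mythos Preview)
Your proof is correct and follows essentially the same approach as the paper: decay at infinity plus positivity for (1), uniform continuity (you use Lipschitz continuity, which is equivalent here) of the determinant on bounded matrix sets for (2), and a direct application of Lemma~\ref{help0} together with bijectivity of $\Phi$ for (3). Your version simply spells out the compactness argument in (1) and the Lipschitz bound in (2) more explicitly than the paper does.
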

	
	\begin{proof} 
		(1)	
		Observe that $d \Ph_0(x) \rightarrow \mathbb{1}$ as $\|x\| \to \infty$. 
		Thus $\det d\Ph_0(x) \rightarrow 1$ as $\|x\| \rightarrow \infty$, 
		which implies $\varepsilon := \inf_{x\in \R^d} \det d\Ph_0(x) >0$.

		(2)	This follows from the fact that 
		the determinant is uniformly continuous on each ball in the space of $d \times d$ matrices.

		(3)
		Let $\de>0$ be as in (2).
		Then, for all $\Ph \in B^{n,\be}(\Ph_0,\de)$,
		\begin{align*}
		 \|	d\Ph^{-1}(\Ph(x))\|=\|(d\Ph(x))^{-1}\|\leq &\frac{\|d\Ph(x)\|^{d-1}}{|\det d\Ph(x)|} 
		 \le \frac2{\ep} (\|\Ph_0\|_{n,\be}+\de)^{d-1},
		\end{align*}
		by \Cref{help0}. Since $\Ph$ is bijective, the proof is complete.
	\end{proof}

		\Cref{help} shows that $\mathcal{D}^{n,\beta}(\R^d) - \Id$ is an open subset of $C^{n,\beta}_0(\R^d,\R^d)$. 
		Thus, for $\Phi_0=\Id + \ph_0 \in \mathcal{D}^{n,\beta}(\R^d)$ and for sufficiently small $r>0$,
		\[
		B^{n,\beta}(\Phi_0,r) = \Id + B^{n,\beta}(\ph_0,r).
		\]
		We interpret $\mathcal{D}^{n,\beta}(\R^d)$ as a Banach manifold modelled on $C^{n,\beta}_0(\R^d,\R^d)$ with global chart 
		$\Phi \mapsto \Phi - \Id$.

	\begin{theorem} \label{thm:group}
		Let $n \in \N_{\ge 1}$ and $\be \in (0,1]$.
		Then $\cD^{n,\be}(\R^d)$ is a group. 
		In general, left translations are discontinuous.
	\end{theorem}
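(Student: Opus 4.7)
The proof splits naturally into showing that $\cD^{n,\be}(\R^d)$ is a group and then exhibiting a discontinuous left translation. Closure under composition and neutrality of $\Id$ are immediate from \Cref{comp1} (applied coordinate-wise) together with the multiplicativity of the determinant, so the only nontrivial axiom is closure under inversion. Fix $\Ph = \Id + \ph \in \cD^{n,\be}(\R^d)$ and set $\Psi := \Ph^{-1}$, $\ps := \Psi - \Id$. By \Cref{ndiff}, $\Psi$ is a $C^n$-diffeomorphism with $\ps$ and its derivatives up to order $n$ globally bounded, and $\det d\Psi(x) = 1/\det d\Ph(\Psi(x)) > 0$; so only $\ps \in C^{n,\be}_0$ remains to be verified.

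From $\Ph \o \Psi = \Id$ one reads off $\ps = -\ph \o \Psi$; since $\ps$ is bounded, $\|\Psi(x)\| \to \infty$ iff $\|x\| \to \infty$, hence $\ps \in C^0_0$. Differentiating once yields $d\Psi = \iota(d\Ph) \o \Psi$ with $\iota(M) := M^{-1}$. By \Cref{help}(1) the image of $d\Ph$ lies in a closed bounded subset $K \subseteq GL(\R^d)$ on which $\iota$ is $C^\infty$ with uniformly bounded derivatives of every order; Fa\`a di Bruno applied to $h := \iota(\mathbb{1} + d\ph) - \mathbb{1}$, using $d\ph \in C^{n-1,\be}_0$, gives $h \in C^{n-1,\be}_0(\R^d, L(\R^d,\R^d))$. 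I then induct on $n \ge 1$. For $n = 1$, $\Psi$ is Lipschitz, so the pointwise bound $\|h(\Psi(x)) - h(\Psi(y))\| \le [h]_{0,\be} [\Psi]_1^\be \|x-y\|^\be$ together with $\|\Psi(x)\| \to \infty$ gives $d\Psi - \mathbb{1} = h \o \Psi \in C^{0,\be}_0$, so $\ps \in C^{1,\be}_0$. For $n \ge 2$, \Cref{inclusion} yields $\Ph \in \cD^{n-1,\be}(\R^d)$, so by induction $\ps \in C^{n-1,\be}_0$; then \Cref{comp1} (with $G = L(\R^d,\R^d)$) together with \Cref{compcl} applied to $h \o (\Id + \ps)$ places the result in $C^{n-1,\be}_0$, whence $\ps \in C^{n,\be}_0$. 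The real effort sits in the Fa\`a di Bruno bookkeeping, but uniform control of $\iota$ on $K$ keeps it mechanical.

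For the discontinuity of left translation, note that in the chart $\Ph = \Id + \ph$ left translation by $\Ps = \Id + \ps$ reads $\ph \mapsto \ph + \ps_\star(\ph)$ with $\ps_\star(\ph) = \ps \o (\Id + \ph)$. \Cref{6.2} yields continuity of $\ps_\star$ on $C^{n,\al}_0$ only when $\ps$ enjoys strictly higher regularity $C^{n,\be}_0$ with $\al < \be$; at the same exponent this must fail for a generic choice of $\ps$. I would build a witness $\ps$ from a model $x \mapsto \rh(x)|x_1|^{n+\be}$ with smooth compactly supported cutoff $\rh$, which lies in $C^{n,\be}_0$ but saturates the H\"older exponent at the origin, and test against translates $\ph_t$ agreeing with $t e_1$ on the support of $\rh$. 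Near the origin the top-order difference then reduces to $|y+t|^\be - |y|^\be$ in the first coordinate, whose $\be$-H\"older seminorm is bounded below by a positive constant (evaluate at $y = 0$ and $y = -t$) for every $t \ne 0$, showing $\ps_\star$ is discontinuous at $0$. The main obstacle is turning this local computation into a clean global argument; I would defer the detailed construction, in the spirit of \cite{LlaveObaya99}, to \Cref{disc}.
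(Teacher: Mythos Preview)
Your proposal is correct. The discontinuity argument is essentially the paper's \Cref{disc}: the paper takes $d=1$, $\ps(x)=x^n|x|^\be\ch(x)$ and inner maps $x\mapsto x+\ch(x)/k$, which is the same mechanism you describe.

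For closure under inversion, however, you take a genuinely different route. The paper (in \Cref{invcl}) works directly with the functional equation $\tau\o\Ph=-\ph$ and applies Fa\`a di Bruno to $d^k(\tau\o\Ph)$ to \emph{isolate} the top term $\tau^{(k)}(\Ph(x))(d\Ph(x),\dots,d\Ph(x))$, bounding it against lower-order data; a second pass then extracts the H\"older seminorm via a telescoping argument in the multilinear slots. Your argument instead differentiates once to obtain $d\ps = h\o\Psi$ with $h=(\mathbb{1}+d\ph)^{-1}-\mathbb{1}$, uses smoothness of matrix inversion on the compact set $\{M:\det M\ge\ep,\ \|M\|\le C\}$ (available by \Cref{help}) to place $h\in C^{n-1,\be}_0$, and then feeds this back through \Cref{comp1} by induction on $n$. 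This is cleaner conceptually---it reduces inversion to composition plus a pointwise smooth outer map---and the base case $n=1$ is handled directly via Lipschitzness of $\Psi$, neatly sidestepping the restriction $m\ge1$ in \Cref{comp1}. The paper's longer computation buys something extra, though: it simultaneously shows that $\inv_c$ is \emph{locally bounded} on $\cD^{n,\be}(\R^d)-\Id$, a quantitative statement used later (e.g.\ in \Cref{prop:invHoelder} and the proof of \Cref{thm:Trouve2}). Your argument could be upgraded to give this as well, since every constant you use depends only on $\|\ph\|_{n,\be}$ and the lower bound on $\det d\Ph$, but as written it establishes only membership.
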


	The theorem will follow from \Cref{disc} and \Cref{invcl}.

	 \begin{lemma}
	 	\label{disc}
	 	In general, left translations in $\cD^{n,\beta}(\R^d)$ are discontinuous.
	 \end{lemma}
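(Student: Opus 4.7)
The plan is to construct explicitly one $\Ps_0 = \Id + \ps \in \cD^{n,\be}(\R^d)$ and a sequence $\Ph_k = \Id + \ph_k \to \Id$ in $\cD^{n,\be}(\R^d)$ for which $\Ps_0 \o \Ph_k \not\to \Ps_0$. Writing
\[
\Ps_0 \o \Ph_k - \Ps_0 = \ph_k + \bigl(\ps \o (\Id + \ph_k) - \ps\bigr)
\]
and using $\ph_k \to 0$ in $C^{n,\be}_0$, this reduces to exhibiting $\ps \in C^{n,\be}_0$ and a null sequence $\ph_k$ in $C^{n,\be}_0$ for which $\ps \o (\Id + \ph_k) - \ps$ stays bounded away from $0$ in $C^{n,\be}_0$.

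I would treat the case $d = 1$ first; the higher-dimensional case is verbatim the same after multiplying by a fixed $C^\infty_c$-bump in the transverse variables and taking values along $\R\, e_1$. Pick $\ps \in C^{n,\be}_0(\R,\R)$ compactly supported in $[-1,1]$ whose $n$-th derivative satisfies $\ps^{(n)}(t) = |t|^\be$ for $|t|$ small, obtained by prescribing $\ps^{(n)}(t) = \chi(t)|t|^\be$ for a smooth even cutoff $\chi$ with $\chi \equiv 1$ near $0$ and integrating $n$ times (correcting by a polynomial against another cutoff to make $\ps$ compactly supported). Then pick $\eta \in C^\infty_c(\R)$ with $\eta \equiv 1$ on $[-2,2]$ and $\supp \eta \subset [-3,3]$, and set
\[
	\ph_k(x) := \tfrac{1}{k}\,\eta(x/k),\qquad k\ge 1.
\]
Since $\ph_k^{(j)}(x) = k^{-(j+1)}\eta^{(j)}(x/k)$, the bounds $\|\ph_k^{(j)}\|_\infty \le C\, k^{-(j+1)}$ and $[\ph_k^{(n)}]_{0,1} \le C\,k^{-(n+2)}$ together with interpolation give $\|\ph_k\|_{n,\be} = O(1/k)$. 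Rescaling $\ps$ by a small constant ensures $\det d(\Id + \ps)>0$, and for $k$ large $\det d(\Id + \ph_k) > 0$, so $\Ps_0, \Ph_k \in \cD^{n,\be}(\R)$.

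The decisive step is the observation that $\ph_k \equiv 1/k$ on $[-2k,2k]$, which for every $k \ge 1$ contains both $\supp \ps$ and $\supp \ps(\cdot + 1/k)$. A short case split (for $|x| > 1 + 1/k$ one has $|x + \ph_k(x)| \ge |x| - 1/k > 1$, hence $\ps(x + \ph_k(x)) = 0 = \ps(x)$) yields the global identity
\[
	\ps \o (\Id + \ph_k) - \ps \;=\; \ps(\cdot + 1/k) - \ps \quad \text{on } \R.
\]
Evaluating the $\be$-H\"older quotient of the $n$-th derivative of the right-hand side at $(x,y) = (0,-1/k)$ gives
\[
	\frac{\bigl|\ps^{(n)}(1/k) - 2\ps^{(n)}(0) + \ps^{(n)}(-1/k)\bigr|}{(1/k)^\be} = \frac{2\,(1/k)^\be}{(1/k)^\be} = 2
\]
for all sufficiently large $k$. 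Thus $\|\ps \o (\Id + \ph_k) - \ps\|_{n,\be} \ge 2$, so left translation by $\Ps_0$ is discontinuous at $\Id$.

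No serious obstacle is anticipated: the argument is the classical failure of translation-continuity in $C^{n,\be}$, localised via the cutoff $\eta(\cdot/k)$ so that the perturbation itself lies in $C^{n,\be}_0$. The only balance required is between the amplitude $1/k$ of $\ph_k$ (small, so $\|\ph_k\|_{n,\be} \to 0$) and the width $k$ of its plateau (large, so that the plateau eventually covers $\supp \ps$ and the composition genuinely behaves as a pure constant translation on that support).
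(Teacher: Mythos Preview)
Your proof is correct and follows essentially the same strategy as the paper: build $\ps$ with $\ps^{(n)}(t)=c|t|^\be$ near $0$, compose with $\Id$ plus a bump of amplitude $1/k$ that acts as a pure shift near the origin, and evaluate the $\be$-H\"older quotient of the $n$-th derivative at the pair $(0,-1/k)$. The only cosmetic differences are that the paper writes down $\ps(x)=x^n|x|^\be\chi(x)$ directly (avoiding your integrate-and-correct construction) and uses a \emph{fixed} bump $\ph_k=\chi/k$, relying on the chain rule locally, whereas you widen the plateau via $\ph_k=\tfrac1k\eta(\cdot/k)$ so that the composition is \emph{globally} the constant shift $\ps(\cdot+1/k)$; both lead to the same two-point computation.
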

	 
	 \begin{proof}
	 	The construction is taken from \cite[6.4]{LlaveObaya99}.
	 	We prove the claim in the case $d = 1$. 
	 	Let $\chi \in C^\infty_c(\R)$ be $1$ on $[-1,1]$, and set $\ps(x):= x^n |x|^\beta \ch(x)$. 
	 	Then $\ps \in C^{n,\beta}_0(\mathbb{R},\mathbb{R})$. In addition, 
	 	let $\Ph_k(x):= x + \ch(x)/k$. Since $\mathcal{D}^{n,\beta}(\mathbb{R}) - \Id$ is open, 
	 	we have $\Ph_k \in \mathcal{D}^{n,\beta}(\mathbb{R})$, for sufficiently large $k$, 
	 	and $\Ph_k \rightarrow \Id$ in $\cD^{n,\be}(\R)$ 
	 	as $k \rightarrow \infty$.  
	 	It is easy to see that, for $|x|<1$,
	 	\begin{equation} \label{eq:constant}
	 		\ps^{(n)}(x) = (n+\beta)\cdots (1+\beta) |x|^\beta =: C_{n,\be} |x|^\beta. 
	 	\end{equation}
	 	Thus, for large $k$,
	 	\begin{equation*}
	 		(\ps \circ \Ph_k)^{(n)}\Big(-\frac{1}{k}\Big) = C_{n,\be}  
	 		\Big|-\frac{1}{k} + \frac{1}{k}\Big|^\beta = 0,
	 	\end{equation*}
	 	and
	 	\begin{equation*}
	 		(\ps \circ \Ph_k)^{(n)}(0) = \frac{C_{n,\be}}{k^\beta}.
	 	\end{equation*}
	 	Hence
	 	\begin{align*}
	 		\MoveEqLeft
	 		\big((\ps \circ \Ph_k)^{(n)} - (\ps \circ \Id)^{(n)}\big)\Big(-\frac{1}{k}\Big) 
	 		- \big((\ps \circ \Ph_k)^{(n)} - (\ps \circ \Id)^{(n)}\big)(0) = - \frac{2C_{n,\be}}{k^\beta},
	 	\end{align*}
	 	which immediately gives $\|\ps\circ \Ph_k - \ps \circ \Id\|_{n,\beta} \ge 2C_{n,\be}$. 
	 	Since $\mathcal{D}^{n,\beta}(\mathbb{R}) - \Id$ is open, 
	 	there is some small $r>0$ such that $\Id+r\ps \in \mathcal{D}^{n,\beta}(\mathbb{R})$.  
	 \end{proof}
	
	The next proposition completes the proof of \Cref{thm:group}.

	\begin{proposition}
		\label{invcl}
		$\mathcal{D}^{n,\beta}(\R^d)$ is closed under inversion. The chart representation
		\begin{align*}
			\inv_c : (\mathcal{D}^{n,\beta}(\R^d)-\Id) \rightarrow (\mathcal{D}^{n,\beta}(\R^d)-\Id),
			\quad \ph \mapsto (\Id+\ph)^{-1} -\Id
		\end{align*}
		is locally bounded.
	\end{proposition}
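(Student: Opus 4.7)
The plan is to set $\Psi := \Phi^{-1}$, $\psi := \Psi - \Id$, and show $\psi \in C^{n,\be}_0(\R^d,\R^d)$ with a bound on $\|\psi\|_{n,\be}$ depending only on $\|\ph\|_{n,\be}$ and on $\ep := \inf_x \det d\Phi(x)$. By \Cref{ndiff} we already know $\Psi \in C^n$ with the first $n$ derivatives uniformly bounded, so $\psi \in C^n_b$. The basic identity is $\psi = -\ph \o \Psi$, which follows from $\Phi \o \Psi = \Id$. Because $\psi$ is bounded, $\Psi$ is proper; hence Fa\`a di Bruno \eqref{eq:FaadiBruno} applied to $\ph \o \Psi$ shows that each summand in $d^k(\ph \o \Psi)$ contains a factor $\ph^{(l)} \o \Psi$ vanishing at infinity, multiplied by bounded quantities $d^{\ga_i}\Psi$. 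Consequently $\psi \in C^n_0$, and it only remains to control $[d^n\psi]_\be$.

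For the H\"older part I would induct on $n$. For $n=1$, differentiating $\Phi \o \Psi = \Id$ gives $d\Psi(x) = [\mathbb{1}+d\ph(\Psi(x))]^{-1}$. By \Cref{help}(1), $d\Phi$ takes values in the open set $\Omega_\ep := \{A \in L(\R^d,\R^d) : \det A \ge \ep\}$, on which matrix inversion $\on{inv}$ is real-analytic and locally Lipschitz with constants depending only on $\ep$ and on the norm bound. Since $d\ph \in C^{0,\be}_0$ and $\Psi$ is globally Lipschitz (with constant $1+\|d\psi\|_0$), the composition $d\ph \o \Psi$ lies in $C^{0,\be}_0$, so $d\psi = \on{inv}\o(\mathbb{1}+d\ph\o\Psi)-\mathbb{1} \in C^{0,\be}_0$. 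For the inductive step, the hypothesis yields $\psi \in C^{n-1,\be}_0$; \Cref{comp1} then gives $d\ph \o (\Id+\psi) \in C^{n-1,\be}_b$, and one more Fa\`a di Bruno expansion of $\on{inv}\o(\mathbb{1}+d\ph\o\Psi)$ (using that $\on{inv}$ has all derivatives bounded on bounded subsets of $\Omega_{\ep/2}$) yields $d\psi \in C^{n-1,\be}_b$, i.e.\ $\psi \in C^{n,\be}_b$. Combined with the first step this gives $\psi \in C^{n,\be}_0$, so $\Psi \in \cD^{n,\be}(\R^d)$.

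Local boundedness is then read off from the proof: \Cref{help}(2) supplies a uniform lower bound $\ep/2$ on $\det d\Phi$ over $\Id + B^{n,\be}(\ph_0,\de)$, \Cref{comp1}'s constants depend polynomially on $\|\ph\|_{n,\be}$ and on $\|\psi\|_{n-1,\be}$ (bounded on balls by the inductive local boundedness), and the constants in the pointwise application of $\on{inv}$ depend only on $\ep$ and on uniform bounds on $d\ph$. The main technical obstacle is exactly this last point: turning pointwise matrix inversion into a $C^{n,\be}_b$ estimate with explicit constants. The cleanest route is the adjugate formula $A^{-1} = (\det A)^{-1}\on{adj}(A)$: the adjugate is polynomial in the entries of $A$, so iterating \Cref{bilinear} provides multilinear $C^{n-1,\be}_b$-bounds; and $1/\det$ reduces to composing the smooth scalar function $t \mapsto 1/t$ on $[\ep/2,M]$ with $\det d\Phi$, which is itself polynomial in the bounded H\"older entries of $d\Phi$.
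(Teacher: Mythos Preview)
Your argument is correct but proceeds quite differently from the paper. The paper works from the identity $\tau\circ\Phi=-\ph$ (your $\psi=-\ph\circ\Psi$ read the other way), expands $d^k(\tau\circ\Phi)$ by Fa\`a di Bruno, and solves for the top-order piece $\tau^{(k)}(\Phi(x))(d\Phi(x),\ldots,d\Phi(x))$; the lower-order terms are bounded by induction on $k$ and the factors $d\Phi(x)$ are removed using only the \emph{pointwise} bound $\sup_x\|(d\Phi(x))^{-1}\|<\infty$ from \Cref{help}(3). The H\"older seminorm $[\tau]_{n,\be}$ is then obtained by a telescoping decomposition in the arguments $A^{\ga,j}(x,y)$. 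Matrix inversion thus never appears as a superposition operator on a H\"older space---only as a pointwise operation with a uniform norm bound. Your route instead writes $d\psi=\on{inv}(\mathbb{1}+d\ph\circ\Psi)-\mathbb{1}$, inducts on $n$ (feeding $\psi\in C^{n-1,\be}_0$ back into \Cref{comp1}), and then needs that postcomposition with $\on{inv}$ preserves $C^{n-1,\be}_b$ on matrix-valued functions with determinant bounded below. That step is not covered by \Cref{comp1} as stated (which is only for $g\circ(\Id+f)$ with both maps defined on $\R^d$), and your adjugate reduction still leaves $1/\det G$, which requires a scalar analogue of \Cref{comp1} for a smooth $h:\R\to\R$ with bounded derivatives on a compact interval composed with a function in $C^{n-1,\be}_b(\R^d,\R)$. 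This is standard and you rightly flag it as the technical point, but it is an extra lemma to prove. In exchange, your induction is cleaner and more modular once that superposition lemma is available; the paper's approach has the advantage of being entirely self-contained with \Cref{help}, Fa\`a di Bruno, and elementary telescoping.
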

	
	\begin{proof}
		For $\Ph = \Id+\ph \in \mathcal{D}^{n,\beta}(\R^d)$ and $\Ph^{-1}=:\Id+\tau$ we have 
		$(\Id+\tau)\circ(\Id+\ph) = \Id$, i.e., 
		\begin{align} \label{eq:equation}
			\tau(x+\ph(x)) = -\ph(x), \quad x \in \R^d.
		\end{align}
		It follows that $\det d\Phi^{-1}(x) > 0$ for all $x$ and that $\tau \circ (\Id+\ph) \in C^{n,\beta}_0(\R^d,\R^d)$. 
		By \Cref{ndiff}, $\Phi^{-1}$ is $n$-times differentiable with globally bounded derivatives.

		Let $\Phi_0 = \Id+\ph_0 \in \mathcal{D}^{n,\beta}(\R^d)$ and $\Phi_0^{-1}=:\Id+\tau_0$.
		Choose $\delta > 0$ such that $B^{n,\beta}(\ph_0,\delta) \subseteq (\mathcal{D}^{n,\beta}(\R^d)-\Id)$ 
		(recall that $\mathcal{D}^{n,\beta}(\R^d)-\Id$ is open) and such that the conclusion of \Cref{help} holds.

		\begin{claim} \label{claim1}
			$\inv_c(B^{n,\beta}(\ph_0,\delta))$ is bounded in $C^{n}_0(\R^d,\R^d)$.
		\end{claim}

		By \Cref{ndiff}, we know that $\inv_c$ maps into $C^{n}_b(\R^d,\R^d)$. An inspection of Fa\`a di Bruno's formula 
		\eqref{eq:FaadiBruno} 
		shows that it actually maps into $C^{n}_0(\R^d,\R^d)$. 
		Let $\ph \in B^{n,\beta}(\ph_0,\delta)$ and $\tau = \inv_c(\ph) \in C^{n}_0(\R^d,\R^d)$ so that 
		\eqref{eq:equation} implies
		\[
		\|\tau(x + \ph(x))\|\leq \|\ph(x)\|\leq  \|\ph-\ph_0\|_{n,\beta}+\|\ph_0\|_{n,\beta} \leq \delta + \|\ph_0\|_{n,\beta}
		\]
		for all $x$. Since $\Id+\ph$ is bijective, this gives 
		\[
		\|\inv_c(\ph)\|_{0} = \|\tau\|_{0} \leq \delta + \|\ph_0\|_{n,\beta}, 
		\quad \ph \in B^{n,\beta}(\ph_0,\delta).
		\]

		We prove by induction on $k$ that 
		for all $k \le n$ there are constants $D_k = D_k(\ph_0,\de)$ such that 
		\begin{equation} \label{eq:inductiveassumption}
			\|\tau\|_{k} = \|\inv_c(\ph)\|_{k} \leq D_{k}, \quad \ph \in B^{n,\beta}(\ph_0,\delta).
		\end{equation}
		By Fa\`a di Bruno's formula \eqref{eq:FaadiBruno}, 
		\begin{align}
				\label{faa}
				 d^k(\tau\circ \Ph)(x) &= \tau^{(k)}(\Ph(x))(d \Ph(x), \dots, d \Ph(x) ) \notag \\
				&\quad + \on{sym} \sum_{l=1}^{k-1} \sum_{\ga \in \Ga(l,k)}
				 c_{\ga} \tau^{(l)}(\Ph(x))\big(\Ph^{(\ga_1)}(x), \dots, \Ph^{(\ga_l)}(x)\big).
		\end{align}
		By the induction hypothesis, for $\ph \in B^{n,\beta}(\ph_0,\delta)$ and $l \leq k-1$,
		\begin{align*}
			\MoveEqLeft 
			\big\|\tau^{(l)}(\Ph(x))\big(\Ph^{(\ga_1)}(x), \dots, \Ph^{(\ga_l)}(x)\big)\big\|_{L_k} \\
			&\leq~  \|\tau^{(l)}(\Ph(x))\|_{L_l}\|\Ph^{(\ga_1)}(x)\|_{L_{\ga_1}}\cdots\|\Ph^{(\ga_l)}(x)\|_{L_{\ga_l}}\\
			&\leq  D_{k-1} (1+\|\ph\|_{n,\beta})^k\\
			&\leq  D_{k-1} (1+\delta + \|\ph_0\|_{n,\beta})^k.
		\end{align*}  
		In addition, 
		$\|d^k(\tau \circ \Ph)(x)\|_{L_k}\leq \|\ph\|_{n,\beta}\leq \delta + \|\ph_0\|_{n,\beta}$,
		by \eqref{eq:equation}.
		It follows that there is some constant $D_k = D_k(\ph_0,\de)$ such that 
		\begin{equation} \label{eq:inverse}
			\|\tau^{(k)}(\Ph(x))(d \Ph(x), \dots, d \Ph(x) )\|_{L_k} \le D_k, 
			\quad \ph \in B^{n,\beta}(\ph_0,\delta),\, x \in \R^d.
		\end{equation}
		Since
		\begin{align*}
			\|\tau^{(k)}(\Ph(x))\|_{L_k}
			&\leq \|\tau^{(k)}(\Ph(x))(d \Ph(x), \dots, d \Ph(x) )\|_{L_k}
			\|(d\Ph(x))^{-1}\|_{L_1}^k,
		\end{align*}
		\eqref{eq:inverse} and \Cref{help} imply \eqref{eq:inductiveassumption} and hence Claim \ref{claim1}.

		\begin{claim} \label{claim3}
			$\inv_c(B^{n,\beta}(\ph_0,\delta))$ is bounded in $C^{n,\be}_0(\R^d,\R^d)$.
		\end{claim}

		Observe that, since $\Ph$ is a bijection of $\R^d$,
		\begin{align*}
			[\ta]_{n,\be}  
			&= \sup_{x\ne y}  \frac{\|d^n \tau(\Ph(x)) - d^n\tau(\Ph(y))\|_{L_n}}{\|x - y\|^\beta}
			\frac{\|x-y\|^\beta}{\|\Ph(x) - \Ph(y)\|^\beta}\\
			&\leq \sup_{x\ne y}  \frac{\|d^n \tau(\Ph(x)) - d^n\tau(\Ph(y))\|_{L_n}}{\|x - y\|^\beta} 
			\Big(\sup_{x\ne y}   \frac{\|x-y\|}{\|\Ph(x) - \Ph(y)\|}\Big)^\beta\\
			&=\sup_{x\ne y}  \frac{\|d^n \tau(\Ph(x)) - d^n\tau(\Ph(y))\|_{L_n}}{\|x - y\|^\beta} 
			\on{Lip}(\Ph^{-1})^\be\\
			&\le \sup_{x\ne y}  \frac{\|d^n \tau(\Ph(x)) - d^n\tau(\Ph(y))\|_{L_n}}{\|x - y\|^\beta} 
			(1+D_1)^\be,  	
		\end{align*}
		for all $\ph \in B^{n,\be}(\ph_0,\de)$,
		by \eqref{eq:inductiveassumption}.		
		For $k \leq n$, let 
		\[
			A^k=A^k(x,y) := (\underbrace{d\Ph(x), \dots, d\Ph(x)}_{k\text{-times}},
			 \underbrace{d\Ph(y), \dots, d\Ph(y)}_{(n-k)\text{-times}}).
		\]
		Then 
		\begin{align}
			\begin{split}
				\label{unicon}
				\MoveEqLeft
				\|d^n\tau(\Ph(y))(A^k) - d^n\tau(\Ph(y))(A^{k-1})\|_{L_n}\\
				&\leq \|d^n\tau(\Ph(y))\|_{L_n}\|d\Ph(x)\|_{L_1}^{k-1}\|d\Ph(y)\|_{L_1}^{n-k} \|d\ph(x)-d\ph(y)\|_{L_1}\\
				&\leq \|\tau\|_{n} (1+\|\ph\|_{1})^{n-1} 2 \|\ph\|_{n,\beta}\|x-y\|^\beta\\
				&\leq 2 D_n (1+\delta +\|\ph_0\|_{n,\beta})^{n}\|x-y\|^\beta,
			\end{split} % 2 is to include the cases $\|x-y\|\ge 1$
		\end{align} 
		where we use $\|d\ph(x)-d\ph(y)\|_{L_1}\leq \|\ph\|_{1,\beta}\|x-y\|^\beta$ for the case $n = 1$ 
		(which holds by definition). For the case $n \geq 2$ we use the mean value inequality.
		(If $\|x-y\| \le 1$ then $\|x-y\| \le \|x-y\|^\be$, otherwise 
		$\|d\ph(x)-d\ph(y)\|_{L_1} \le 2 \|\ph\|_{n,\be} \le 2 \|\ph\|_{n,\be} \|x-y\|$.) 
		By \Cref{help},
		\begin{align*}
			\|d^n \tau(\Ph(x)) - d^n\tau(\Ph(y))\|_{L_n}
			&\leq \|d^n \tau(\Ph(x))(A^n) - d^n\tau(\Ph(y))(A^n)\|_{L_n} \|d\Ph(x)^{-1}\|_{L_1}^n\\
			&\leq C \|d^n \tau(\Ph(x))(A^n) -  d^n\tau(\Ph(y))(A^0)\|_{L_n}\\  
			&\quad+ C\sum_{k=1}^{n}\|d^n\tau(\Ph(y))(A^k) - d^n\tau(\Ph(y))(A^{k-1})\|_{L_n}.
		\end{align*}
		We may use \eqref{unicon} to estimate the second term on the right-hand side. 
		Thus, to end the proof of Claim \ref{claim3}, and hence of the proposition, it remains to show the following.

		\begin{claim} \label{claim2}
			There exists a constant $C$ such that
		\begin{align*}
			\|d^n\tau(\Ph(x))(A^n ) 
			- d^n\tau(\Ph(y))(A^0)\|_{L_n} 
			\leq C \|x-y\|^\beta,
		\end{align*}
		 for all $\ph \in B^{n,\beta}(\ph_0,\delta)$ and all $x,y\in \mathbb{R}^d$.
		\end{claim}

		For any $\ga \in \N_{>0}^l$ and $0 \le j \leq l$ let
		\[
		A^{\gamma, j}=
		A^{\gamma, j}(x,y):= \big( \Ph^{(\ga_1)}(x), \dots,\Ph^{(\ga_{j})}(x), 
		\Ph^{(\ga_{j+1})}(y), \dots,  \Ph^{(\ga_{l})}(y)\big).
		\]
		Then, by Fa\`a di Bruno's formula \eqref{eq:FaadiBruno}, 
		\begin{align}
				\label{faa2}
				d^n(\tau\circ \Ph)(x) &- d^n(\tau\circ \Ph)(y) \notag 
				= \tau^{(n)}(\Ph(x))(A^{n}) - \tau^{(n)}(\Ph(y))(A^{0}) \notag\\
				&\quad+ \on{sym} \sum_{l=1}^{n-1} \sum_{\ga \in \Ga(l,n)}
					 c_{\ga} \big(\tau^{(l)}(\Ph(x))(A^{\ga,l})  
				- \tau^{(l)}(\Ph(y))(A^{\ga,0}) \big).
		\end{align}
		By \eqref{eq:equation}, there is a constant $C$ such that
		\begin{align} \label{eq:est1}
			\|d^n(\tau\circ \Ph)(x) - d^n(\tau\circ \Ph)(y)\|_{L_n} \leq C\|x-y\|^\beta, 
			\quad \ph \in B^{n,\beta}(\ph_0,\delta).
		\end{align}
		Moreover,
		\begin{align*}
				\MoveEqLeft
				\big\| \tau^{(l)}(\Ph(x))(A^{\gamma,l}) -  \tau^{(l)}(\Ph(y))(A^{\gamma,0})\big\|_{L_n}\\
			&\leq \big\| \tau^{(l)}(\Ph(x))(A^{\gamma,l}) - \tau^{(l)}(\Ph(y))(A^{\gamma,l})\big\|_{L_n}\\
				& \quad + 
				\sum_{k=1}^{l} \big\| \tau^{(l)}(\Ph(y))(A^{\gamma,k}) 
				- \tau^{(l)}(\Ph(y))(A^{\gamma,k-1})\big\|_{L_n}.
		\end{align*}
		For the first summand, since $l < n$, 
		\begin{align*}
			\MoveEqLeft
			\big\| \tau^{(l)}(\Ph(x))(A^{\gamma,l}) - \tau^{(l)}(\Ph(y))(A^{\gamma,l})\big\|_{L_n}
			\\
			&\le
			\big\| \tau^{(l)}(\Ph(x)) - \tau^{(l)}(\Ph(y))\big\|_{L_l}
			 (1+\|\ph\|_{n,\be})^n
			 \\
			 &\leq \|\tau\|_{n}(1+\|\ph\|_{1}) \|x-y\| (1+\|\ph\|_{n,\be})^n,
		\end{align*}
		and $\|\ta\|_{n} \le D_n$ for $\ph \in B^{n,\be}(\ph_0,\de)$, by Claim \ref{claim1}. For the other summands 
		observe that
		\begin{align*}
			\MoveEqLeft
			\tau^{(l)}(\Ph(y))(A^{\gamma,k}) - \tau^{(l)}(\Ph(y))(A^{\gamma,k-1})\\
			&=  \tau^{(l)}(\Ph(y))\big(  \dots, \Ph^{(\ga_{k-1})}(x),
			 (\Ph^{(\ga_k)}(x) -\Ph^{(\ga_k)}(y) ),\Ph^{(\ga_{k+1})}(y),\dots \big),
		\end{align*}
		whence 
		\begin{align*}
			\MoveEqLeft
			\big\|\tau^{(l)}(\Ph(y))(A^{\gamma,k}) - \tau^{(l)}(\Ph(y))(A^{\gamma,k-1})\big\|_{L_n}
			\\
			&\le  
			\begin{cases}
				\|\tau\|_{n} (1+\|\ph\|_{n})^{n-1}  \|\ph\|_{n}\|x-y\| & \text{ if } l >1,
				\\
				\|\tau\|_{1} \|\ph\|_{n,\be} \|x-y\|^\be & \text{ if } l =1.
			\end{cases}
		\end{align*}
		Altogether this means that we find a constant $K$ 
		such that for all $\ph \in B^{n,\beta}(\ph_0,\delta)$ and all $x,y \in \mathbb{R}^d$ 
		\begin{align*}
			\big\| \tau^{(l)}(\Ph(x))(A^{\gamma,l}) -  \tau^{(l)}(\Ph(y))(A^{\gamma,0})\big\|_{L_n}\leq K\|x-y\|^\be;
		\end{align*}
		indeed, if $\|x-y\| \le 1$ then $\|x-y\| \le \|x-y\|^\be$, otherwise the estimate follows from the triangle inequality 
		and Claim \ref{claim1}.  
		Together with \eqref{faa2} and \eqref{eq:est1} this implies Claim \ref{claim2}.
	\end{proof}
	
For later use we prove the following.

\begin{proposition}
		\label{prop:invHoelder}
		Let $n \in \N_{\ge 1}$ and $0< \al < \be \le 1$.
		Then, for all $\ph_0 \in (\mathcal{D}^{n,\beta}(\R^d)-\Id)$ there exists $\de>0$ such that 
		for all $\ph_1,\ph_2 \in B^{n,\be}(\ph_0,\de) \subseteq (\mathcal{D}^{n,\beta}(\R^d)-\Id)$, 
		\begin{equation}
			\|\inv_c (\ph_1) - \inv_c (\ph_2)\|_{n,\al} \le M \|\ph_1 - \ph_2\|_{n,\al}^{\be-\al}, 	
		\end{equation} 
		where $M = M(n,\ph_0,\de)$.
		In particular, 
		\begin{align*}
			\inv_c : (\mathcal{D}^{n,\beta}(\R^d)-\Id) \rightarrow (\mathcal{D}^{n,\al}(\R^d)-\Id),
			\quad \ph \mapsto (\Id+\ph)^{-1} -\Id
		\end{align*}
		is continuous.
	\end{proposition}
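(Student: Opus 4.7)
The plan is to derive an explicit identity for $\tau_1 - \tau_2 := \inv_c(\ph_1) - \inv_c(\ph_2)$ in terms of $\ph_1 - \ph_2$, and then apply the composition estimates from \Cref{comp1} (bounding a right translation) and \Cref{6.2} (bounding a left translation, which is exactly where the Hölder exponent loss $\be-\al$ enters). Writing $\Ph_i := \Id + \ph_i$ and $\Ph_i^{-1} =: \Id + \tau_i$, the identity $\ph_2 \circ \Ph_2^{-1} = -\tau_2$ gives
\begin{equation*}
	\Ph_1 \circ \Ph_2^{-1} = \Id + f, \qquad f := (\ph_1 - \ph_2) \circ \Ph_2^{-1}.
\end{equation*}
Applying $\Ph_1^{-1}$ from the left yields $\Ph_2^{-1} = \Ph_1^{-1} \circ (\Id + f)$, which, after writing $\Ph_1^{-1} = \Id + \tau_1$, rearranges to the key identity
\begin{equation*}
	\tau_2 - \tau_1 = f + (\tau_1)_\star(f) - (\tau_1)_\star(0).
\end{equation*}

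I would then bound the two summands separately. For $f = \tau_2^\star(\ph_1 - \ph_2)$, \Cref{invcl} (its Claim 3) ensures that $\|\tau_2\|_{n,\be}$ is uniformly bounded on $B^{n,\be}(\ph_0,\de)$; by \Cref{inclusion}, so is $\|\tau_2\|_{n,\al}$. \Cref{comp1} then gives
\begin{equation*}
	\|f\|_{n,\al} \leq C_1 \|\ph_1 - \ph_2\|_{n,\al},
\end{equation*}
with $C_1 = C_1(n,\ph_0,\de)$. For the second summand, I apply \Cref{6.2} with $g = \tau_1 \in C^{n,\be}_0$, taking $f_0 = 0$ and any $R$ larger than the uniform bound on $\|f\|_{n,\al}$ (available once $\de$ is shrunk so that $C_1 \cdot 2\de < R$, using $\|\ph_1-\ph_2\|_{n,\al} \le 2\|\ph_1-\ph_2\|_{n,\be} < 2\de$ via \Cref{inclusion}). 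Combined with the uniform bound on $\|\tau_1\|_{n,\be}$, this yields
\begin{equation*}
	\|(\tau_1)_\star(f) - (\tau_1)_\star(0)\|_{n,\al} \leq C_2 \|f\|_{n,\al}^{\be - \al}.
\end{equation*}

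After a further (harmless) shrinking of $\de$ to guarantee $\|f\|_{n,\al} \le 1$, so that $\|f\|_{n,\al} \le \|f\|_{n,\al}^{\be-\al}$, the triangle inequality applied to the key identity gives
\begin{equation*}
	\|\tau_1 - \tau_2\|_{n,\al} \leq M \|\ph_1 - \ph_2\|_{n,\al}^{\be-\al}
\end{equation*}
with $M = M(n,\ph_0,\de)$, which is the claimed estimate. The final continuity assertion is then immediate: \Cref{inclusion} implies that convergence in $C^{n,\be}_0$ forces convergence in $C^{n,\al}_0$, and the Hölder bound above transports this to convergence of $\inv_c$ in $C^{n,\al}_0$. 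The only technical point requiring care is the \emph{uniform} choice of constants in \Cref{6.2}, which amounts to the uniformity of the bounds on $\|\tau_1\|_{n,\be}$ and $\|f\|_{n,\al}$ across $B^{n,\be}(\ph_0,\de)$; both are already handled by \Cref{invcl} together with the inclusion estimate.
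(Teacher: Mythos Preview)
Your proof is correct and follows essentially the same approach as the paper: both derive an algebraic identity for $\tau_1-\tau_2$ and then bound one piece by \Cref{comp1} (right translation) and the other by \Cref{6.2} (left translation of $\tau_1\in C^{n,\be}_0$), with the uniform constants supplied by \Cref{invcl}. Your identity $\tau_2-\tau_1 = f + (\tau_1)_\star(f)-(\tau_1)_\star(0)$ is a mild repackaging of the paper's decomposition (the paper writes the second piece as $[(\tau_1)_\star(\ph_2)-(\tau_1)_\star(\ph_1)]\circ(\Id+\tau_2)$, which equals $\tau_1 - \tau_1\circ(\Id+f)$), so your version saves one application of \Cref{comp1} but is otherwise the same argument.
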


\begin{proof}
	Choose $\de >0$ such that $B^{n,\be}(\ph_0,\de) \subseteq (\mathcal{D}^{n,\beta}(\R^d)-\Id)$. 
	Let $\ph_1,\ph_2 \in B^{n,\be}(\ph_0,\de)$. 
	We write $\tau_i = \inv_c(\ph_i)$ and $\Ph_i= \Id+\ph_i$, for $i =0,1,2$.
			Then
			\begin{align*}
				\tau_1 -\tau_2 
				&= \Ph_1^{-1}\circ \Phi_2 \circ \Phi_2^{-1} -\Phi_1^{-1}\circ \Phi_1 \circ \Phi_2^{-1}\\
				&= \ph_2 \circ (\Id + \tau_2) - \ph_1\circ(\Id + \tau_2) 
				\\
				&\quad + \tau_1\circ(\Id+\ph_2)\circ(\Id+\tau_2) 
				- \tau_1\circ(\Id+\ph_1)\circ(\Id+\tau_2).
			\end{align*}
			By \Cref{comp1}, 
			\begin{align*}
				\|\ph_2 \circ (\Id + \tau_2) - \ph_1\circ(\Id + \tau_2)\|_{n,\alpha} 
				\leq M(n) \|\ph_1 - \ph_2\|_{n,\alpha} (1+\|\tau_2\|_{n,\alpha})^{n+1},
			\end{align*}
			and by \Cref{comp1}, \Cref{6.2} (and \Cref{inclusion}),
			\begin{align*}
				\MoveEqLeft
				\|\tau_1\circ(\Id+\ph_2)\circ(\Id+\tau_2) - \tau_1\circ(\Id+\ph_1)\circ(\Id+\tau_2)\|_{n,\alpha}\\
				\leq & ~ M(n)  \|\tau_1\circ(\Id+\ph_2) - \tau_1\circ(\Id+\ph_1)
				\|_{n,\alpha} (1+\|\tau_2\|_{n,\alpha})^{n+1}\\
				\leq & ~ M(n,\|\ph_0\|_{n,\al} ,\de) \|\tau_1\|_{n,\be} 
				\|\ph_1 - \ph_2\|^{\be -\al}_{n,\alpha} (1+\|\tau_2\|_{n,\alpha})^{n+1}.
			\end{align*}
			Since $\|\tau_i\|_{n,\be}$ is uniformly bounded for $\ph_i \in B^{n,\be}(\ph_0,\delta)$ 
			if $\de >0$ is chosen sufficiently small, 
			by \Cref{invcl}, this implies the assertion.
\end{proof}

%-----------------------------------------------------------------------------------------------------------------------	
	\subsection{Intermediate H\"older spaces}
%-----------------------------------------------------------------------------------------------------------------------

	As we have already seen in \Cref{disc}, 
	the group $\mathcal{D}^{n,  \beta}(\R^d)$ is not topological 
	(with respect to the topology given as a Banach manifold modelled on $C^{n,\beta}_0(\R^d,\R^d)$). 
	Nevertheless we know that the left translations become continuous if the outer mapping is only 
	slightly more regular than the space it acts on. This observation motivates the following definitions.

	Let $E$, $F$ be Banach spaces, $U \subseteq E$ open, and $n \in \N$.
	For $\be \in (0,1]$ define 
	\[
	C^{n,\beta-}_b(U,F):= \bigcap_{\alpha \in (0,\be)} C^{n,\alpha}_b(U,F), 
	\] 	
	and for 
	$\be \in [0,1)$,
	\[
	C^{n,\beta+}_b(U,F):= \bigcup_{\alpha \in (\be,1)} C^{n,\alpha}_b(U,F). 
	\]
	If $\be \in (0,1)$ we have the strict inclusions
	\begin{equation*}
		C^{n,\be+}_b(U,F) \subsetneq C^{n,\be}_b(U,F)  \subsetneq C^{n,\be-}_b(U,F).
	\end{equation*}
	We endow $C^{n,\be-}_b(U,F)$ and $C^{n,\be+}_b(U,F)$ with their natural projective and inductive 
	locally convex limit topologies, 
	respectively. 

	Then $C^{n,\beta-}_b(U,F)$ is a Fr\'echet space with a generating system 
	of seminorms $\mathcal{P}=\{ \|\cdot\|_{n,\alpha}: \alpha \in (0,\be) \}$, 
	or a countable subfamily thereof, like $\{ \|\cdot\|_{n,\beta - 1/k}: k \geq k_0 \}$.
	The balls $B_{\alpha}^{n, \beta-}(f_0,\varepsilon):= 
	\{ f \in C^{n,\beta-}_b (U,F): \|f  -f_0\|_{n,\alpha} 
		< \varepsilon \}$ satisfy 
		\[
		B_{\alpha_2}^{n,\beta-}(f_0,\varepsilon) \subseteq B_{\alpha_1}^{n, \beta-}(f_0,2\varepsilon)
		\quad \text{ if } \al_1 < \al_2, 
		\]
		by \Cref{inclusion}.
		Thus $\{ B^{n,\beta-}_{\alpha}(f_0,\varepsilon): \alpha < \beta,\, \varepsilon > 0 \}$
		forms a neighborhood base
		of $f_0 \in C^{n,\beta-}_b(U,F)$.

	In analogy we define $C^{n,\be\pm}_0$ and $C^{n,\be\pm}$.		

	\begin{lemma} \label{lem:cpreg}
		$C^{n,\beta+}_b(U,F)$ and $C^{n,\beta+}_0(E,F)$ are compactly regular (LB)-spaces.		
	\end{lemma}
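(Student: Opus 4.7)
The plan is to recognise both $C^{n,\beta+}_b(U,F)$ and $C^{n,\beta+}_0(E,F)$ as countable inductive limits of Banach spaces, and to verify Retakh's condition (M) by means of the interpolation inequality \Cref{exp}.

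First I would choose $\alpha_k := \beta + 1/k$ with $k_0$ such that $\alpha_{k_0} < 1$, and write
\[
	C^{n,\beta+}_b(U,F) = \bigcup_{k \ge k_0} C^{n,\alpha_k}_b(U,F), \qquad C^{n,\beta+}_0(E,F) = \bigcup_{k \ge k_0} C^{n,\alpha_k}_0(E,F).
\]
By \Cref{inclusion} the linking maps $C^{n,\alpha_{k+1}}_b \hookrightarrow C^{n,\alpha_k}_b$ (and likewise for the $C^{n,\alpha_k}_0$ versions) are continuous embeddings of Banach spaces, so both spaces carry a natural (LB)-structure; Hausdorffness follows from the continuous embedding into $C^n_b$.

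To obtain compact regularity I would invoke its classical equivalence (Retakh, Neus, Bierstedt) with the following condition (M): there exist absolutely convex $0$-neighborhoods $V_k \subseteq C^{n,\alpha_k}_b$ such that for every $k$ there is $m = m(k) \ge k$ with the property that the topologies induced on $V_k$ by $C^{n,\alpha_m}_b$ and by $C^{n,\alpha_\ell}_b$ agree for every $\ell \ge m$. Take $V_k$ to be the closed unit ball of $C^{n,\alpha_k}_b$ and set $m := k+1$. For $\ell \ge m$ and $f,g \in V_k$, \Cref{exp} applied to $\alpha_\ell < \alpha_{k+1} < \alpha_k$ (with $\mu := (\alpha_k-\alpha_{k+1})/(\alpha_k-\alpha_\ell) \in (0,1)$) gives
\[
	\|f-g\|_{n,\alpha_{k+1}} \le \|f-g\|_{n,\alpha_\ell}^{\mu}\,\|f-g\|_{n,\alpha_k}^{1-\mu} \le 2^{1-\mu}\|f-g\|_{n,\alpha_\ell}^{\mu},
\]
so the \emph{a priori} weaker $C^{n,\alpha_\ell}_b$-topology controls the $C^{n,\alpha_{k+1}}_b$-topology on $V_k$. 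The reverse implication is trivial from \Cref{inclusion}, hence the two topologies agree on $V_k$ and (M) is established.

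The case $C^{n,\beta+}_0(E,F)$ is handled by the identical argument, since \Cref{exp} and \Cref{inclusion} restrict to the closed subspaces $C^{n,\alpha_k}_0$ without change. The only real choice is to organise the abstract side around Retakh's (M); the analytic content is a single use of the interpolation inequality on the unit ball of the previous step, so I do not anticipate any real obstacle beyond pinpointing the appropriate abstract reference.
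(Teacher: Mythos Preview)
Your proposal is correct and follows the same route as the paper: reduce to Retakh's condition (M) via Neus, take unit balls as the $0$-neighborhoods, and apply the interpolation inequality \Cref{exp} on the unit ball of step $k$ to show that all later-step topologies agree with that of step $k+1$. One harmless slip: with $\alpha_k = \beta + 1/k$ decreasing in $k$, the linking maps run $C^{n,\alpha_k}_b \hookrightarrow C^{n,\alpha_{k+1}}_b$, not the other way, but your interpolation estimate is oriented correctly.
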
	

	\begin{proof}
		  It suffices, by \cite[Satz 1]{Neus78}, to verify condition (M) of \cite{Retakh70}:
  There exists a sequence of increasing 0-neighborhoods $B_p\subseteq C^{n,\be+1/p}_b(U,F)$ such that for 
  each $p$ there exists an $m\geq p$ for which the topologies of $C^{n,\be +1/k}_b(U,F)$ and
  of $C^{n,\be +1/m}_b(U,F)$ coincide on $B_p$ for all $k\geq m$.

  For $\al \le \al'$ we have $\|f\|_{n,\al} \le 2\|f\|_{n,\al'}$, by \Cref{inclusion}. 
  It suffices to show that for 
  $\be < \al_2 < \al_1 < \al$,
  $\ep>0$, and 
  $f\in B^{n,\al}(0,1)$ there exists $\de>0$ such that 
  $B^{n,\al_2}(f,\de)\cap B^{n,\al}(0,1)\subseteq B^{n,\al_1}(f,\ep)$.
  
  Let $g \in B^{n,\al_2}(f,\de)\cap B^{n,\al}(0,1)$. Then $\|g-f\|_{n,\al_2} < \de$ and $\|g\|_{n,\al} < 1$. 
  By \Cref{exp},
  \begin{align*}
  	\|g-f\|_{n,\al_1} 
  	&\le  \|g-f\|_{n,\al_2}^{\frac{\al-\al_1}{\al-\al_2}} \|g-f\|_{n,\al}^{\frac{\al_1-\al_2}{\al-\al_2}} 
  	<  \de^{\frac{\al-\al_1}{\al-\al_2}}  2^{\frac{\al_1-\al_2}{\al-\al_2}}.
  \end{align*}
  So it is clear that we may find $\de$ as required.	
 \end{proof}

 Consequently, $C^{n,\beta+}_b(U,F)$ and $C^{n,\beta+}_0(U,F)$ are complete (thus convenient), webbed, and ultra-bornological.

%-----------------------------------------------------------------------------------------------------------------------
\subsection{\texorpdfstring{$C^{0,\om}$}{C0omega}-mappings between convenient vector spaces}		
\label{sec:C0omconvenient}
%-----------------------------------------------------------------------------------------------------------------------

Let $\om : [0,\infty) \to [0,\infty)$ be a subadditive increasing modulus of continuity ($\lim_{t \to 0}\om(t) = \om(0) = 0$). 
By a $C^{0,\om}$-curve $c$ we mean a function defined on the real line with values in a convenient vector space $F$ such that
for each bounded interval $I \subseteq \R$,
\[
\Big\{\frac{c(t) - c(s)}{\om(|t-s|)} : t,s \in I,\, t \ne s \Big\}
\]
is bounded in $F$.
We say that a mapping between convenient vector spaces is $C^{0,\om}$, if it maps $C^\infty$-curves to $C^{0,\om}$-curves. 
The $c^\infty$-topology coincides with the final topology of all $C^{0,\om}$-curves 
(which follows from the proof of \cite[2.13]{KM97}), and so a $C^{0,\om}$-mapping is continuous with respect to 
the $c^\infty$-topology.  
The following lemma shows that between Banach spaces the notion of $C^{0,\om}$-mapping coincides with the usual definition. 

\begin{lemma} \label{lem:C0om}
	Let $E$, $F$ be Banach spaces, $U \subseteq E$ open. 
	A mapping $f : U \to F$ is $C^{0,\om}$ if and only if $f(x) - f(y)/\om(\|x-y\|)$ is locally bounded. 
\end{lemma}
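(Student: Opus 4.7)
The equivalence splits into an easy sufficient condition and a harder necessary one. For the easy direction, suppose $(f(x)-f(y))/\om(\|x-y\|)$ is locally bounded and let $c \in C^\infty(\R, U)$. On a bounded interval $I \subseteq \R$, the image $c(I)$ is relatively compact in $U$, hence contained in an open set on which the quotient is bounded by some constant $C$. Moreover $c|_I$ is Lipschitz, say with constant $L$, so for $s,t \in I$,
\[
\|f(c(t))-f(c(s))\| \le C\,\om(\|c(t)-c(s)\|) \le C\,\om(L|t-s|) \le C\lceil L\rceil\,\om(|t-s|),
\]
where the last inequality uses that a subadditive increasing modulus of continuity satisfies $\om(nt) \le n\,\om(t)$ for $n \in \N$. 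Hence $f\circ c \in C^{0,\om}(\R,F)$, as required.

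For the converse, I would argue by contradiction: assume $f$ sends smooth curves to $C^{0,\om}$-curves, but that the quotient fails to be locally bounded. Then there is a point $x_0 \in U$ and sequences $(x_n), (y_n)$ converging to $x_0$ with
\[
r_n := \frac{\|f(x_n)-f(y_n)\|}{\om(\|x_n-y_n\|)} \to \infty.
\]
Since any $C^{0,\om}$-mapping between convenient vector spaces is $c^\infty$-continuous, which on Banach spaces agrees with norm continuity, $f$ is bounded on a neighborhood of $x_0$. Combined with the fact that $\om$ is bounded below on $[\eta,\infty)$ for every $\eta>0$, this forces $\|x_n-y_n\| \to 0$.

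The core step is then to pass to a subsequence converging fast enough to $x_0$ and construct a $C^\infty$-curve $c : \R \to U$ together with sequences $t_n \ne s_n$ in $\R$ such that
\[
c(t_n) = x_n, \qquad c(s_n) = y_n, \qquad |t_n-s_n| = \|x_n-y_n\|.
\]
This is achieved by the standard smooth curve lemma (cf.\ \cite[2.8]{KM97}): splice together the affine segments $[s_n,t_n] \ni t \mapsto x_n + \frac{t-s_n}{t_n-s_n}(y_n-x_n)$ using a fixed smooth reparametrization that is flat at the endpoints, inserting constant pieces of varying length between them, and choosing the subsequence fast enough that all derivatives of $c$ tend to $0$ as one approaches the accumulation point. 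Once such $c$ is in hand, the $C^{0,\om}$-property of $f\circ c$ on a bounded interval containing all $t_n, s_n$ bounds the ratio $\|f(c(t_n))-f(c(s_n))\|/\om(|t_n-s_n|) = r_n$, contradicting $r_n \to \infty$.

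I expect the main obstacle to lie precisely in this curve construction: one must match the time-scale $|t_n-s_n|$ to the spatial scale $\|x_n-y_n\|$ so that no factor is lost when comparing the two quotients, while simultaneously guaranteeing $C^\infty$-regularity of $c$ at the accumulation point; this forces the subsequence to be chosen so that $\|x_n-y_n\|$ decays faster than every polynomial rate relative to the gaps between successive intervals $[s_n,t_n]$.
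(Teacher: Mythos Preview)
Your easy direction is fine and matches the paper's one-line justification via subadditivity and monotonicity of $\om$ together with the local Lipschitz property of smooth curves.

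The hard direction has a genuine gap in the curve construction. You insist on $|t_n-s_n|=\|x_n-y_n\|$, but then no $C^\infty$-curve with $c(s_n)=y_n$, $c(t_n)=x_n$ can exist in general. Indeed, on each piece the $k$-th derivative of a reparametrized affine segment of time-length $|t_n-s_n|$ has norm of order $\|x_n-y_n\|/|t_n-s_n|^{k}=\|x_n-y_n\|^{1-k}$, which blows up for $k\ge 2$ since $\|x_n-y_n\|\to 0$. Even without reparametrization, the mean value theorem gives $\tfrac{1}{t_n-s_n}\int_{s_n}^{t_n}c'(t)\,dt=\tfrac{x_n-y_n}{\|x_n-y_n\|}$; if $c'$ were continuous at the accumulation point, the left side would converge to $c'(t_\infty)$, forcing the unit vectors $(x_n-y_n)/\|x_n-y_n\|$ to converge --- which you have no control over. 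Choosing a faster subsequence or inserting longer constant pieces between the segments does not help: the obstruction is internal to each non-constant piece, not in how they are joined.

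The paper's fix is precisely to \emph{mismatch} the scales: it takes the time-length to be $2^n\|x_n-y_n\|$, so that the velocity on each piece has norm $2^{-n}\to 0$ and \cite[12.2]{KM97} yields a genuine $C^\infty$-curve. The price is that one now compares $\|f(x_n)-f(y_n)\|$ with $\om(2^n\|x_n-y_n\|)\le 2^n\om(\|x_n-y_n\|)$ (subadditivity), and this extra factor $2^n$ must be absorbed by choosing the subsequence so that $\|f(x_n)-f(y_n)\|\ge n\,2^n\,\om(\|x_n-y_n\|)$ from the start, which is possible since the quotient is unbounded on every neighborhood of the bad point. Your final paragraph correctly senses a tension between matching scales and smoothness, but misidentifies the remedy: one does not match the scales and then choose a fast subsequence; one deliberately stretches the time-scale and pays for it with a stronger lower bound on $r_n$.
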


\begin{proof}
	Suppose that there is $z \in U$ and $x_n \ne y_n \in U$ such that $\|x_n - z \| \le 4^{-n}$, 
	$\|y_n - z \| \le 4^{-n}$, and $\|f(x_n) - f(y_n)\| \ge n 2^n \om(\|x_n - y_n\|)$.
	By \cite[12.2]{KM97}, there is a $C^\infty$-curve $c$ and a convergent sequence of real numbers $t_n$ such that 
	$c(t+t_n) = x_n + t \frac{(y_n - x_n)}{2^{n} \|x_n - y_n\|}$ for all $0 \le t \le s_n := 2^n \|x_n - y_n\|$. 
	Then, by subadditivity of $\om$, 
	\[
		\frac{\|(f\o c)(t_n + s_n) - (f\o c)(t_n)\|}{\om(s_n)} 
		= \frac{\|f(x_n) - f(y_n)\|}{\om(2^n \|x_n - y_n\|)}
		\ge n.
	\]
	The converse implication follows from subadditivity and monotonicity of $\om$, 
	since $C^\infty$-curves are locally Lipschitz.
\end{proof}

This lemma can be found in \cite{Faure91}, \cite[12.7]{KM97}, or \cite[Lemma]{KMR} in the H\"older (or Lipschitz) case
$\om(t) = t^\ga$. 

\begin{definition}
	We say that $\om$ is a \emph{slowly vanishing} modulus of continuity if $\om$ 
	is increasing, subadditive, and satisfies  
	\[
		\liminf_{t\downarrow 0}  \frac{\om(t)}{t^\ga} > 0  \quad \text{ for all } \ga>0.
	\]
\end{definition}
For instance, $\om$ defined by 
 $\om(t) := -(\log t)^{-1}$, if $0 < t < e^{-2}$, $\om(t) := 1/2$, if $t \ge e^{-2}$, and $\om(0):= 0$,
 is a slowly vanishing modulus of continuity.

%-----------------------------------------------------------------------------------------------------------------------	
	\subsection{The \texorpdfstring{$C^{0,\om}$}{C0om} Lie groups \texorpdfstring{$\mathcal{D}^{n,\beta-}(\R^d)$}{Dnbeta-} 
	and \texorpdfstring{$\mathcal{D}^{n,\beta+}(\R^d)$}{Dnbeta+}}
%-----------------------------------------------------------------------------------------------------------------------	

	Let $n \in \N_{\ge 1}$.
	We define
	\begin{gather*}
		\cD^{n,\beta\pm}(\mathbb{R}^d):= \big\{ \Phi \in \Id + C^{n,\beta\pm}_0(\mathbb{R}^d, \mathbb{R}^d)
		 : \det d \Phi(x) > 0 ~ \forall x \in \mathbb{R}^d \big\},
	\end{gather*}
	where $\be \in (0,1]$ if $\pm = -$ and $\be \in [0,1)$ if $\pm = +$. 
	Then $\mathcal{D}^{n,\beta\pm}(\R^d) - \Id$ is an open subset of $C^{n,\beta\pm}_0(\R^d,\R^d)$.
	We take this interpretation as defining property for the topology, i.e., 
	$V \subseteq \mathcal{D}^{n,\beta\pm}(\R^d)$ is open if and only if $(V - \Id)$ is open 
	in $(\mathcal{D}^{n,\beta\pm}(\R^d)-\Id)\subseteq C^{n,\beta\pm}_0(\R^d,\R^d)$.
	
	Clearly, $\cD^{n,\be\pm}(\R^d)$ forms a group, by \Cref{thm:group}.
	We will now prove that $\cD^{n,\be\pm}(\R^d)$ are  
	$C^{0,\om}$ Lie groups for any slowly vanishing modulus of continuity $\om$.

\begin{theorem} \label{thm:C0omegaLie}
	Let $n \in \N_{\ge 1}$. 
	Let $\om$ be a slowly vanishing modulus of continuity. 
	Then 
	$\cD^{n,\be-}(\R^d)$, for $\be \in (0,1]$, and $\cD^{n,\be+}(\R^d)$, for $\be \in [0,1)$,  
	are $C^{0,\om}$ Lie groups. 
	In particular, $\cD^{n,\be-}(\R^d)$, for $\be \in (0,1]$, is a topological group (with respect to its 
	natural Fr\'echet topology).
\end{theorem}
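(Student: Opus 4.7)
The plan is to use the global chart $\Ph \mapsto \Ph - \Id$ to identify $\cD^{n,\be\pm}(\R^d)$ with the $c^\infty$-open subset $\cD^{n,\be\pm}(\R^d) - \Id$ of the convenient vector space $C^{n,\be\pm}_0(\R^d,\R^d)$, and then verify that the chart expressions of multiplication, $\mu_c(\ph_1,\ph_2):=\ph_2+\ph_1\o(\Id+\ph_2)$, and of inversion $\inv_c$ send smooth curves to $C^{0,\om}$-curves. Once this is done, $C^{0,\om}$ Lie group structure is automatic since the underlying manifold is just a $c^\infty$-open set of a convenient vector space with a single global chart.

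The heart of the argument is a uniform $(\al'-\al)$-H\"older estimate in the chart: for any pair of parameters $\al < \al'$ with both below $\be$ in the $\be-$ case (or both above $\be$ in the $\be+$ case), \Cref{jcont} gives, on balls,
\[
\|\mu_c(f_1,g_1)-\mu_c(f_2,g_2)\|_{n,\al} \le M \bigl(\|f_1-f_2\|_{n,\al}+\|g_1-g_2\|_{n,\al}\bigr)^{\al'-\al},
\]
with a local constant depending on $\|(f_0,g_0)\|_{n,\al'}$, and analogously \Cref{prop:invHoelder} for $\inv_c$ with the same exponent $\al'-\al$. Given a $C^\infty$-curve $c$ into the chart domain, $c$ is locally Lipschitz in every relevant $(n,\al')$-norm: directly from the projective structure in the $\be-$ case, and, in the $\be+$ case, after invoking compact regularity \Cref{lem:cpreg} so that $c$ actually factors on bounded intervals through a Banach step $C^{n,\al'}_0$ with $\al'>\be$. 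Composing, $t\mapsto\mu_c(c(t))$ is locally $|s-t|^{\al'-\al}$-H\"older in $\|\cdot\|_{n,\al}$. The slowly vanishing property of $\om$ supplies, for each $\ga=\al'-\al>0$, constants $C_\ga,\de_\ga>0$ with $t^\ga\le C_\ga\om(t)$ on $(0,\de_\ga)$, while on the complement of $(0,\de_\ga)$ inside any bounded interval the $C^{0,\om}$-quotient is already bounded for elementary reasons. Hence $\mu_c\o c$ and $\inv_c\o c$ are $C^{0,\om}$-curves into every seminorm space $C^{n,\al}_0$, and therefore into the projective, resp.\ inductive, limit $C^{n,\be\pm}_0$.

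The last assertion is then essentially automatic: $\cD^{n,\be-}(\R^d)$ is modelled on a Fr\'echet space, so its $c^\infty$-topology agrees with the locally convex topology (\Cref{ssec:reviewconvenient}); every $C^{0,\om}$-map is $c^\infty$-continuous, and so multiplication and inversion are continuous in the Fr\'echet topology, making $\cD^{n,\be-}(\R^d)$ a topological group. The main obstacle I anticipate is the $\be+$ case, where the $c^\infty$-topology is strictly finer than the inductive limit topology; there one must commit throughout to smooth-curve testing and rely on \Cref{lem:cpreg} to push curve-based estimates into a single Banach step, which is the only setting where the ingredients \Cref{jcont} and \Cref{prop:invHoelder} of \Cref{composition} actually live.
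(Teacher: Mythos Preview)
Your proposal is correct and follows essentially the same route as the paper: test the chart expressions of composition and inversion along smooth curves, use \Cref{jcont} and \Cref{prop:invHoelder} (with parameters $\al<\al'$ chosen in the appropriate range) to get local $|t-s|^{\al'-\al}$-H\"older estimates, convert these to $C^{0,\om}$-bounds via the slowly vanishing hypothesis, and in the $\be+$ case invoke compact regularity (\Cref{lem:cpreg}) to factor smooth curves through a single Banach step. Your treatment of the topological group claim for $\cD^{n,\be-}$ via the coincidence of the Fr\'echet and $c^\infty$-topologies also matches the paper's argument.
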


\begin{proof}
	Let us first consider $\cD^{n,\be-}(\R^d)$, for $\be \in (0,1]$.
	Let $g,f \in C^\infty(\R,C^{n,\be-}_0(\R^d,\R^d))$ and let $I \subseteq \R$ be a compact interval. 
	Then the sets $g(I)$, $f(I)$ are bounded in $C^{n,\be-}_0(\R^d,\R^d)$ and thus in every $C^{n,\al}_0(\R^d,\R^d)$ for 
	$\al<\be$. If $\al < \tilde \al < \be$, then, by \eqref{eq:jcont},
	\begin{align*}
	 	\MoveEqLeft
	 	\|g(t) \o (\Id + f(t))-g(s)\o  (\Id + f(s))\|_{n,\al} 
	 		\\
			&\le M\big(\|g(t) - g(s)\|_{n,\al} + \|f(t)-f(s)\|_{n,\al}\big)^{\tilde \al - \al}
			\le \tilde M |t - s|^{\tilde \al - \al},
	 \end{align*}
	for $t,s \in I$. There is $\ep >0$ and $C=C(\al,\tilde \al)$ such that $|t - s|^{\tilde \al - \al} \le C \om(|t-s|)$ 
	if $|t-s| \le \ep$. Since $\om$ is increasing, we may conclude that, for $t \mapsto h(t) :=g(t) \o (\Id + f(t))$, 
	\begin{equation} \label{eq:set}
		\Big\{\frac{h(t) -h(s)}{\om(|t-s|)} : s \ne t \in I\Big\}
	\end{equation}
	is bounded in $C^{n,\al}_0(\R^d,\R^d)$. 
	So the composition is $C^{0,\om}$ on $\cD^{n,\be-}(\R^d)$.

	Let us turn to the inversion in $\cD^{n,\be-}(\R^d)$. 
	Let $f \in C^\infty(\R,C^{n,\be-}_0(\R^d,\R^d))$. 
	Fix $\al < \tilde \al < \be$ and $t_0 \in \R$. 
	Let $\de>0$ be such that $B^{n,\tilde \al}(f(t_0),\de) \subseteq (\cD^{n,\tilde \al}(\R^d) - \Id)$. 
	There is a neighborhood $I$ of $t_0$ such that $f(I) \subseteq B^{n,\tilde \al}(f(t_0),\de)$. 
	By \Cref{prop:invHoelder} (after possibly shrinking $\de$), for all $t,s \in I$, 
	\[
		\|\inv_c (f(t)) - \inv_c (f(s))\|_{n,\al} \le M \|f(t) - f(s)\|_{n,\al}^{\tilde \al- \al},
	\]
	where $M = M(n,f(t_0),\de)$. 
	Finishing the arguments in the same way as for the composition, 
	we conclude that the inversion is $C^{0,\om}$ on $\cD^{n,\be-}(\R^d)$. 

	This implies that $\cD^{n,\be-}(\R^d)$ is a topological group, since the underlying Fr\'echet topology and 
	the $c^\infty$-topology coincide. Of course, it also follows directly from \Cref{jcont} and \Cref{prop:invHoelder}.

	Now let us consider $\cD^{n,\be+}(\R^d)$, for $\be \in [0,1)$.
	Let $g,f \in C^\infty(\R,C^{n,\be+}_0(\R^d,\R^d))$.
	For any compact interval $I \subseteq \R$, the images $g(I)$, $f(I)$ are bounded in $C^{n,\be+}_0(\R^d,\R^d)$. 
	Since $C^{n,\be+}_0(\R^d,\R^d)$ is a compactly regular (LB)-space, there is some $\al_0>\be$ such that 
	$g(I)$, $f(I)$ are bounded in $C^{n,\al_0}_0(\R^d,\R^d)$, and thus also in every $C^{n,\al}_0(\R^d,\R^d)$, for 
	$\al \in (\be,\al_0]$. 
	Let $\al,\tilde \al \in (\be,\al_0]$ with $\al < \tilde \al$. Then the arguments above  
	show that the set \eqref{eq:set} is bounded in $C^{n,\al}_0(\R^d,\R^d)$, and thus in $C^{n,\be+}_0(\R^d,\R^d)$.
	So the composition is $C^{0,\om}$ on $\cD^{n,\be+}(\R^d)$. Similarly for the inversion.   
\end{proof}

\begin{remark}
	We do not know whether $\cD^{n,\be+}(\R^d)$ is a topological group with respect to its natural 
	inductive locally convex topology, since the $c^\infty$-topology is finer in this case.	
\end{remark}

Groups with continuous left translations and smooth right translations were dubbed half-Lie groups in 
	\cite{KrieglMichorRainer14a}. The chart representations of the right translations in $\cD^{n,\be\pm}(\R^d)$ 
	are affine and bounded, by \Cref{comp1}, 
	and thus smooth.
	Hence, $\cD^{n,\be-}(\R^d)$ is a half-Lie group.

The next result shows that the $C^{0,\om}$-regularity of the group operations in $D^{n,\be\pm}(\R^d)$ 
is optimal.

\begin{proposition} \label{cor:optimal}
	Let $n \in \N_{\ge 1}$. 
	\begin{enumerate}
		\item For all $\be \in (0,1]$,  
	$\cD^{n,\be-}(\R^d)$ is a half-Lie group. 
	There are left translations in $\cD^{n,\be-}(\R^d)$ which are not locally H\"older continuous of any order $\ga>0$.
		\item Let $\be \in [0,1)$. 
	For any $\ga>0$, there are  
	left translations in $\cD^{n,\be+}(\R^d)$ which are not locally H\"older continuous of order $\ga$.
	\end{enumerate}	
\end{proposition}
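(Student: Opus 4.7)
The half-Lie group assertion in (1) is essentially already in hand. The plan here is simply to cite two facts: \Cref{thm:C0omegaLie} shows that $\cD^{n,\beta-}(\R^d)$ is a topological group in its Fr\'echet topology (so left translations are continuous), and the paragraph preceding the proposition observes that the chart representations of the right translations are bounded affine maps (via \Cref{comp1}) and therefore smooth.

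For the non-H\"older statements in both (1) and (2), I would reuse the explicit construction from the proof of \Cref{disc}, now promoted to a smooth curve. Fix $\chi \in C^\infty_c(\R)$ with $\chi \equiv 1$ on $[-1,1]$ and set $f(t) := \Id + t\chi$; this is a smooth curve into both $C^{n,\beta-}_0(\R,\R)$ and $C^{n,\beta+}_0(\R,\R)$, and it lies in $\cD^{n,\beta\pm}(\R)$ for $|t|$ small. For $t_k := 1/k$ one has $f(t_k) = \Ph_k$ as in \Cref{disc}. For any $\sigma \in (0,1]$, the function $\psi_\sigma(x) := x^n|x|^\sigma\chi(x)$ lies in $C^{n,\sigma}_0(\R,\R)$ and satisfies $\psi_\sigma^{(n)}(x) = C_{n,\sigma}|x|^\sigma$ on $(-1,1)$. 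The same pointwise computation performed in \Cref{disc} at the points $0$ and $-1/k$ then yields the key estimate
\[
\|\psi_\sigma \circ f(t_k) - \psi_\sigma \circ f(0)\|_{n,\alpha} \;\ge\; 2\, C_{n,\sigma}\, k^{\alpha - \sigma}, \qquad \alpha \in (0,\sigma].
\]

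For (1) I will take $\sigma = \beta$, so $\psi_\beta \in C^{n,\beta}_0 \subseteq C^{n,\beta-}_0$. Given any $\gamma>0$, choosing $\alpha \in (\max\{\beta-\gamma,0\},\beta)$ makes the quotient by $t_k^\gamma$ grow like $k^{\alpha-\beta+\gamma} \to \infty$, so $t\mapsto \psi_\beta \circ f(t)$ is not $C^{0,\gamma}$ into $C^{n,\beta-}_0$, and hence $(\psi_\beta)_\star$ is not $C^{0,\gamma}$. For (2), given $\gamma>0$, I will pick $\beta' \in (\beta,\min\{\beta+\gamma,1\})$ and take $\sigma = \beta'$. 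The only real technical point, and the step I expect to be the main obstacle, is that boundedness of a subset of the (LB)-space $C^{n,\beta+}_0$ does not a priori reduce to boundedness in a single Banach step. Here I plan to invoke the compact regularity from \Cref{lem:cpreg}: it forces any bounded subset of $C^{n,\beta+}_0$ to be bounded in some $C^{n,\alpha_1}_0$ with $\alpha_1>\beta$, hence in $C^{n,\alpha_0}_0$ with $\alpha_0 := \min(\alpha_1,\beta') \in (\beta,\beta']$. The key estimate then produces divergence at rate at least $k^{\beta-\beta'+\gamma} \to \infty$ by the choice $\beta'<\beta+\gamma$, contradicting boundedness and showing that $(\psi_{\beta'})_\star$ is not $C^{0,\gamma}$.
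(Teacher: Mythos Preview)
Your proposal is correct and follows essentially the same approach as the paper: both use the test function $\psi_\sigma(x)=x^n|x|^\sigma\chi(x)$ and the smooth curve $t\mapsto t\chi$, compute $\psi_\sigma^{(n)}$ explicitly near the origin, and produce unboundedness of the Hölder quotient by evaluating at the pair of points $(0,-s)$ (you take the discrete choice $s=1/k$, the paper keeps $s$ continuous and sets $y=-s$). For part~(2) you invoke \Cref{lem:cpreg} to descend from the (LB)-space to a single step exactly as the paper does; the only cosmetic point is that your $f(t)=\Id+t\chi$ should be read as the curve $t\mapsto t\chi$ in $C^{n,\beta\pm}_0$, and you should note (as in \Cref{disc}) that $\Id+r\psi_\sigma\in\cD^{n,\beta\pm}(\R)$ for small $r>0$ so that $(\psi_\sigma)_\star$ genuinely corresponds to a left translation in the group.
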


\begin{proof}
	(1)
	Let $\chi \in C^\infty_c(\R)$ be $1$ on $[-1,1]$ and satisfy $\ch'(x) >-1$ for all $x \in \R$, 
	and set $\ps(x):= x^n |x|^\beta \ch(x) 
	\in C^{n,\beta}_0(\mathbb{R},\mathbb{R}) \subseteq C^{n,\beta-}_0(\mathbb{R},\mathbb{R})$. 
	We will show that $\th(t):= \ps\o (\Id+t\ch)$, for small $t \in \R$, is  
	not locally H\"older continuous of order $\ga$
	into $C^{n,\al}_0(\R,\R)$ 
	for any $\al>\be-\ga$.
	This implies the assertion, since $\Id + r \ps \in \cD^{n,\be-}(\R)$ if $r>0$ is small enough.   	
	We must show that, for any small interval $I \ni 0$, the set
	\[
		\Big\{\frac{\th(t)^{(n)}(x) - \th(t)^{(n)}(y) - \th(s)^{(n)}(x) + \th(s)^{(n)}(y)}{|x-y|^\al|s-t|^\ga} :
		x \ne y \in \R, s \ne t \in I
		\Big\}
	\]
	is unbounded. If $|x| < 1$, then for small $t$ (cf.\ \eqref{eq:constant}),
	\begin{equation*}
	 		\th(t)^{(n)}(x) = \ps^{(n)}(x+t) =  C_{n,\be}\, |x+t|^\beta.
	\end{equation*}
	For $t=x=0$ and $|y| \le 1$ the expression reads (up to a constant factor)
	\[
		\frac{ - |y|^\be - |s|^\be + |y+s|^\be}{|y|^\al|s|^\ga} 
	\]
	and upon setting $y = -s$, we get $-2 |s|^{\be -\al-\ga}$ which is unbounded near $s=0$.

	(2) Let $\ga >0$ be given.
	For $\al > \be$ let $\ps_\al(x) := x^n |x|^\al \ch(x)$. 
	Then, as seen above, $\th_\al(t):= \ps_\al \o (\Id+t\ch)$, for small $t \in \R$, is  
	not locally H\"older continuous of order $\ga$
	into $C^{n,\al_1}_0(\R,\R)$ 
	for any $\al_1 \in (\be,\al)$ with  $\ga>\al-\al_1$.
	It follows that $(\ps_\al)_\star$ is not locally H\"older continuous of order $\ga$, provided that 
	$\al -\be < \ga$. 
	Indeed, if 
	\[
		\Big\{\frac{\th_\al(t) - \th_\al(s)}{|s-t|^\ga} :
		s \ne t \in I
		\Big\}	
	\] 
	were bounded in $C^{n,\be+}_0(\R,\R)$, then it would be so in some step $C^{n,\al_1}_0(\R,\R)$, 
	by \Cref{lem:cpreg}. 
\end{proof}

The results of this section are summarized in \Cref{table:groups}.

{\small
\begin{table}[H]
\centering
\begin{tabular}{@{}lccccc@{}}
  & group &  $C^{0,\om}$-Lie group & topological group & half-Lie group & Lie group \\ \midrule
 $\cD^{n,\be}(\R^d)$ & yes & no & no & no & no \\ \midrule
 $\cD^{n,\be-}(\R^d)$ & yes & yes & yes & yes & no \\ \midrule
 $\cD^{n,\be+}(\R^d)$ & yes & yes & ? & ? & no \\ 
\end{tabular}
\caption{\label{table:groups}\small{Here $n \in \N_{\ge 1}$ and $\om$ is any slowly vanishing 
modulus of continuity. In the first two rows 
$\be \in (0,1]$, in the third row $\be \in [0,1)$.}}
\end{table}
}

%-----------------------------------------------------------------------------------------------------------------------
\section{H\"older spaces are ODE closed} \label{Trouve}
%-----------------------------------------------------------------------------------------------------------------------

%-----------------------------------------------------------------------------------------------------------------------
	\subsection{Flows of time-dependent H\"older vector fields}
%-----------------------------------------------------------------------------------------------------------------------	
	
Let $n \in \N_{\ge1}$ and $\be \in (0,1]$. 	
By a \emph{strong time-dependent $C^{n,\beta}_0$-vector field} 
we mean a Bochner integrable function 
$u : [0,1] \to C^{n,\beta}_0(\R^d,\R^d)$. We will write $I := [0,1]$ and  
\[
	\|u\|_{L^1(I,C^{n,\beta})} := \int_0^1 \|u(t)\|_{n,\be} \,dt.	
\]
The space $L^1(I,C^{n,\beta}_0(\R^d,\R^d))$ of (equivalence classes with respect to a.e.\ coincidence of) Bochner integrable function 
$u : I \to C^{n,\beta}_0(\R^d,\R^d)$ equipped with this norm is a Banach space.

	Let $\al \le \be$.
	We say that a continuous mapping $\Phi:I \rightarrow \mathcal{D}^{n,\alpha}(\R^d)$ is a 
	\emph{strong $\mathcal{D}^{n,\alpha}$-flow} 
	of $u$ if for all $t \in I$ we have
	\begin{equation}
		\label{eq:Banachflow}
		\Phi(t) = \Id + \int_0^t u(s) \circ \Phi(s)\,ds
	\end{equation}
	in $\mathcal{D}^{n,\alpha}(\R^d)$, where the integral is the Bochner integral.

	Since evaluation $\ev_x$ at $x \in \R^d$ is continuous and linear on $C^{n,\al}_0(\R^d,\R^d)$ and it thus commutes with the 
	Bochner integral, 
	\eqref{eq:Banachflow} entails
	\begin{equation}
		\label{eq:pointwiseflow}
		\Phi^\wedge(t,x) = x + \int_0^t u^\wedge(s,\Phi^\wedge(s,x))\,ds, \quad x \in \R^d.
	\end{equation}
	We say that $\Ph^\wedge : I \times \R^d \to \R^d$ is the \emph{pointwise flow} of $u^\wedge$ if it satisfies 
	\eqref{eq:pointwiseflow}. 
	So, if $u$ has a strong $\cD^{n,\alpha}$-flow $\Ph$, then $u^\wedge$ has a pointwise flow which is continuous in $t$ and 
	differs from the identity by a $C^{n,\al}_0$-mapping in $x$. 
	Conversely, the existence of a pointwise flow with this properties will entail the existence of a 
	strong $\cD^{n,\alpha}$-flow only if the Bochner integral in \eqref{eq:Banachflow} exists.  
	Since the H\"older spaces are non-separable, strong measurability of $t \mapsto u(t) \o \Ph(t)$ may fail 
	and the integral in \eqref{eq:Banachflow} may not exist,
	if the left translation $u(t)_\star$ is not continuous.
	This is exactly what happens if $\be=\al$.

	Luckily we can work with pointwise estimates which enable us to prove that time-dependent $C^{n,\be}_0$-vector fields
	have unique pointwise flows $\Ph$ such that $\Ph^\vee \in C(I,\cD^{n,\be}(\R^d))$ (no loss of regularity!).  
	The proof actually works for a wider class of vector fields, 
	so-called \emph{pointwise time-dependent $C^{n,\be}_0$-vector fields}, 
	which shall be introduced in the next subsection.

	We shall see in \Cref{ssec:strongflow} that the unique pointwise flow $\Ph^\vee \in C(I,\cD^{n,\be}(\R^d))$ of a 
	strong time-dependent $C^{n,\be}_0$-vector field $u$ lifts to a strong $\cD^{n,\al}$-flow, for each $\al < \be$.  

%-----------------------------------------------------------------------------------------------------------------------
\subsection{Trouv\'e group and ODE closedness} \label{sec:ODEhull}
%-----------------------------------------------------------------------------------------------------------------------

Let $I = [0,1]$ and let $E$ be a Banach space of mappings $\R^d \to \R^d$ which is continuously embedded in $C^1_0(\R^d,\R^d)$.

\begin{definition}
	We say that a mapping $u : I \times \R^d \to \R^d$ is a \emph{pointwise time-dependent $E$-vector field} if 
	the following conditions are satisfied.
	\begin{itemize}
		\item $u(t, \cdot) \in E$ for every $t \in I$.
		\item $u(\cdot,x)$ is measurable for every $x\in \R^d$.
		\item $I \ni t \to \|u(t,\cdot)\|_{E}$ is (Lebesgue) integrable. 
	\end{itemize}
	Let us denote the set of all pointwise time-dependent $E$-vector fields by $\fX_E(I,\R^d)$. 
	We remark that instead of the third condition we could also require that $\|u^\vee\|_E$ is dominated a.e.\ by some 
	non-negative function 
	$m \in L^1(I)$.
\end{definition}

Clearly, $u \in L^1(I,E)$ implies $u^\wedge \in \fX_E(I,\R^d)$; the converse is in general not true, in particular, if $E$ 
is non-separable and strong measurability and measurability are not the same; see \Cref{example:strongmeasurability} below. 
We will continue to write 
\[
	\|u^\vee\|_{L^1(I,E)} = \int_0^1 \|u^\vee(t)\|_{E} \,dt, \quad \text{ for  } u \in \fX_E(I,\R^d),
\]
even though $\|u^\vee\|_{L^1(I,E)}$ might be finite while $u^\vee : I \to E$ is not Bochner integrable; 
this will lead to no confusion. 

\begin{example} \label{example:strongmeasurability}
	Let $\chi \in C^\infty_c(\R)$ be $1$ on $[-1,1]$, and let $\ps(x):= x^n |x|^\beta \ch(x)$, then $\ps$  lies in $C^{n,\be}_0(\R,\R)$ (cf. \Cref{disc}). 
	Let $u : I \times \R \to \R$ be defined by $u(t,x) = \ps(x-t)$; $u$ is clearly a pointwise time-dependent $C^{n,\be}_0$-vector field.
	But $u^\vee \not\in L^1(I,C^{n,\be}_0(\R))$:
	indeed, for fixed $t,s \in I$, $t \ne s$, (cf.\ \eqref{eq:constant})
	\begin{align*}
		[u^\vee(t) - u^\vee(s)]_{n,\be} 
		&= \sup_{x \ne y} \frac{|\ps^{(n)}(x-t) - \ps^{(n)}(y-t) - \ps^{(n)}(x-s) + \ps^{(n)}(y-s)|}{|x-y|^\be} 
		\\
		&\ge C_{n,\be}
		\sup_{x,y \in I,\, x \ne y} \frac{\big||x-t|^\be - |y-t|^\be - |x-s|^\be + |y-s|^\be \big|}{|x-y|^\be}
		\\
		&\ge 2 C_{n,\be} > 0 \quad \text{ (choose $x=t$, $y=s$).} 
	\end{align*} 
	It follows that the image $u^\vee(I)$ is not essentially separable in $C^{n,\be}_0(\R)$, and so $u^\vee$ is not 
	strongly measurable, by the Pettis measurability theorem (cf.\ \cite[p.~42]{DiestelUhl77}). 
\end{example}

It is well-known that pointwise time-dependent $C^n_0$-vector fields $u$ 
have unique pointwise flows $\Ph = \Ph_u : I \times \R^d \to \R^d$ 
such that $\Ph^\vee : I  \to \Id + C^{n}_0(\R^d,\R^d)$ is continuous, and $\Ph^\vee(t)$ is a $C^n$-diffeomorphism at any time 
$t$; see e.g.\ \cite[8.7, 8.8, 8.9]{Younes10} and the arguments in the proof of \Cref{thm:Trouve1} below. 

\begin{definition} \label{def:Trouve}
	Let $E$ be a Banach space of mappings $\R^d \to \R^d$ which is continuously embedded in $C^1_0(\R^d,\R^d)$.
	Then 
	\[
		\cG_E := \big\{ \Ph_u^\vee(1) : u \in \fX_E(I,\R^d)  \big\}
	\]
	is a group with respect to composition; cf.\ \cite[8.14]{Younes10}. We call $\cG_E$ the \emph{Trouv\'e group} of $E$.
\end{definition}

\begin{definition} \label{def:ODEclosed}
	We say that $E$ is \emph{ODE closed} if $\cG_E \subseteq \Id + E$.
\end{definition}

\begin{remark}
	It is clear that, more generally, we could take (mutatis mutandis) 
	any locally convex space $E$ of mappings $\R^d \to \R^d$ 
	which is continuously embedded in $C^1_0(\R^d,\R^d)$ in the above definitions. 

	Furthermore, this leads to the notion of \emph{ODE hull} of $E$, i.e., the intersection of all 
	locally convex spaces $F$ of mappings $\R^d \to \R^d$ 
	which are continuously embedded in $C^1_0(\R^d,\R^d)$ and continuously contain $E$, 
	endowed with the natural projective topology.
	The ODE hull is well-defined, because $C^1_0$ is ODE-closed, and it is evidently ODE closed. 
\end{remark}

%-----------------------------------------------------------------------------------------------------------------------
\subsection{The Trouv\'e group of \texorpdfstring{$C^{n,\be}_0(\R^d,\R^d)$}{Cnalpha0}}
%-----------------------------------------------------------------------------------------------------------------------

Let $n \in \N_{\ge1}$ and $\be \in (0,1]$.
In this section we show that the Trouv\'e group of $C^{n,\be}_0(\R^d,\R^d)$,
\[
	 \cG_{n,\be}(\R^d) := \cG_{C^{n,\be}_0(\R^d,\R^d)},
\]
coincides with the connected component of the identity in $\cD^{n,\be}(\R^d)$. In particular, 
$C^{n,\be}_0$ is ODE closed. Let us use the short notation
\[
	\fX_{n,\be}(I,\R^d) := \fX_{C^{n,\be}_0}(I,\R^d).
\] 
We want to stress that this is an example of an ODE closed space on which left translations $g_\star$ 
are \emph{not} continuous.

\begin{theorem} \label{thm:Trouve1}
	Let $n \in \N_{\ge 1}$ and $\be \in (0,1]$.
	Let $u \in \fX_{n,\be}(I,\R^d)$. 
	Then $u$ has a unique pointwise flow $\Ph : I \times \R^d \to \R^d$ such that 
	$\Ph^\vee : I \to \cD^{n,\be}(\R^d)$ 
	is continuous. In particular, $C^{n,\be}_0$ is ODE closed.
\end{theorem}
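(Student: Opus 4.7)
The plan is to avoid any use of the composition operator $u \mapsto u \circ \Ph^\vee$, which fails to be continuous from $\cD^{n,\be}$ to itself by \Cref{disc}, and instead work fiber-by-fiber via classical Carath\'eodory theory combined with Gr\"onwall-type estimates applied pointwise in $x$ to $\Ph$ and its $x$-derivatives.

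First I would fix $x \in \R^d$ and apply \Cref{cara} to the ODE $\dot y = u^\wedge(t,y)$, $y(0)=x$ in the finite-dimensional space $\R^d$, where strong measurability coincides with usual measurability so that the pointwise hypothesis on $u$ is enough. Since $u(t,\cdot) \in C^{n,\be}_0 \subseteq C^1_0$ is globally Lipschitz with constant $\ell(t) = m(t) := \|u(t)\|_{n,\be} \in L^1(I)$, \Cref{cara} applied on successive subintervals (glued via the global bound $\|y(t)\| \le \|x\| + \int_0^t m$) yields a unique absolutely continuous solution $\Ph^\wedge(\cdot,x) : I \to \R^d$; standard uniqueness-comparison arguments give measurable/continuous dependence on $x$.

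Next I would differentiate the fixed-point identity in $x$. The variational equation
\[
\p_t d_x\Ph^\wedge(t,x) = du(t,\Ph^\wedge(t,x))\, d_x\Ph^\wedge(t,x),\quad d_x\Ph^\wedge(0,x) = \mathbb 1,
\]
has, by \Cref{lincara}, a unique solution which a difference-quotient argument identifies with the Fr\'echet derivative. Iterating, the higher variational equations for $d_x^k\Ph$ ($k\le n$) are linear in the top derivative with inhomogeneity built from lower-order derivatives of $\Ph$ through Fa\`a di Bruno (cf.\ \eqref{eq:FaadiBruno}); iterated Gr\"onwall bounds all $\|d_x^k\Ph^\wedge(t,\cdot)\|_0$ uniformly in $t$ by a constant depending only on $\|u\|_{L^1(I,C^{n,\be})}$. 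Liouville's formula together with Gr\"onwall on $\det d\Ph$ gives $\det d\Ph^\vee(t) > 0$, and an argument as in the proof of \Cref{invcl} (using $u(t,\cdot) \in C^{n,\be}_0$) yields decay at infinity of all derivatives of $\Ph^\vee(t)-\Id$. The most technical step is the $\be$-H\"older bound for $d_x^n\Ph^\vee(t)$: writing the analogous identity
\[
d_x^n\Ph^\wedge(t,x) - d_x^n\Ph^\wedge(t,y) = \int_0^t \bigl(\text{linear term in } d_x^n\Ph + \text{Fa\`a di Bruno remainder}\bigr)\, ds,
\]
the remainder collects products of derivatives of $u$ of order $\le n$ evaluated at $\Ph(s,\cdot)$ with products of lower-order derivatives of $\Ph$; the top-order difference $u^{(n)}(\Ph(s,x))-u^{(n)}(\Ph(s,y))$ is controlled by $\|u(s)\|_{n,\be}\cdot \on{Lip}(\Ph^\vee(s))^\be \|x-y\|^\be$, with $\on{Lip}(\Ph^\vee(s))$ already under control from the previous estimate. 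Gr\"onwall then produces the desired bound on $[d_x^n\Ph^\vee(t)]_{0,\be}$, so $\Ph^\vee(t) \in \cD^{n,\be}(\R^d)$.

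Continuity of $t \mapsto \Ph^\vee(t)$ into $\cD^{n,\be}(\R^d)$ then follows because $d_x^k\Ph^\wedge(t,x) - d_x^k\Ph^\wedge(s,x) = \int_s^t \p_\tau d_x^k\Ph^\wedge(\tau,x)\, d\tau$, $k\le n$, has integrand uniformly bounded in $(\tau,x)$ by the previous step, and the analogous formula for the $\be$-H\"older difference of $d_x^n\Ph$ has an integrand uniformly bounded in $\|x-y\|^\be$; absolute continuity of the Lebesgue integral gives convergence to $0$ as $|t-s|\to 0$. Uniqueness of the pointwise flow, and hence ODE closedness of $C^{n,\be}_0$, is immediate from the fiber-wise Carath\'eodory uniqueness. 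The main obstacle throughout is that, by \Cref{example:strongmeasurability}, $t \mapsto u(t) \circ \Ph^\vee(t)$ need not be Bochner integrable into the non-separable space $C^{n,\be}_0$, so one cannot lift \eqref{eq:intro-flow} to a Banach-space identity at the top regularity and every H\"older seminorm must be controlled by hand through Gr\"onwall on the variational equations.
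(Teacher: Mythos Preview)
Your proposal is correct and follows essentially the same route as the paper: fiber-by-fiber Carath\'eodory existence, Fa\`a di Bruno for the variational equations of $d_x^k\Ph$, and Gr\"onwall applied to the quotient $V^n_{x,y}(t)=(d_x^n\Ph(t,x)-d_x^n\Ph(t,y))/\|x-y\|^\be$ to get the top H\"older seminorm, then the same integral formula for continuity in $t$. The only cosmetic differences are that the paper deduces $\det d\Ph>0$ from continuity of $t\mapsto\det d\Ph(t,x)$ and the fact that each $\Ph^\vee(t)$ is already known to be a diffeomorphism (rather than Liouville), and it obtains decay at infinity of $\Ph^\vee(t)-\Id$ and its derivatives via the dominated convergence theorem applied to $\int_0^t \|d_x^k(u(s,\cdot+\ph(s,\cdot)))(x_k)\|\,ds$---your pointer to \Cref{invcl} is not the right reference for that step.
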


\begin{proof}
	In fact, cf. \cite[8.7, 8.8, 8.9]{Younes10}, the pointwise flow exists, $x \mapsto \Ph(t,x)$ is $C^n_b$, and for all $t \in I$,
	\begin{equation} \label{eq:Younes}
		\|\Ph(t,\cdot)-\Id\|_n \leq C_1 e^{C_2\|u\|_{L^1(I, C^n_b)}}.
	\end{equation}
	 Set $W^n_x(t):= d_x^n \Ph(t,x)$ and 
	 $V^n_{x,y}(t):= (W^n_x(t)-W^n_y(t))/\|x-y\|^\be$.
	Then $W_x^n(t)$ satisfies
		\begin{align*}
			\partial_t W^n_x(t) = d_x^n\big(u^\vee(t)(\Ph(t,x))\big), \quad  W^n_x(0) &=\left\{\begin{array}{cl}
			\mathbb 1, & \text{for } n = 1\\
			0, & \text{for } n \geq 2
			\end{array}\right..
		\end{align*}
	Upon setting 
	\[
	A^{\ga,j}(t)= A^{\ga,j}(x,y)(t) :=  \big(W_x^{\ga_1}(t), \dots, W_x^{\ga_j}(t), 
	W_y^{\ga_{j+1}}(t),  \dots, W_y^{\ga_l}(t) \big).
	\]	
	and using Fa\`a di Bruno's formula \eqref{eq:FaadiBruno}, this ODE takes the form
	\begin{align*}
				\partial_t W^n_x(t)
				= \on{sym} \sum_{l=1}^{n} \sum_{\ga \in \Ga(l,n)}
				 c_{\ga} u^\vee(t)^{(l)}(\Ph(t,x))(A^{\ga,l}(t)),
	\end{align*}
	and analogously,
	\begin{align*}
				\partial_t W^n_y(t)
				= \on{sym} \sum_{l=1}^{n} \sum_{\ga \in \Ga(l,n)}
				 c_{\ga} u^\vee(t)^{(l)}(\Ph(t,y))(A^{\ga,0}(t)).
	\end{align*} 
	It follows that $V^n_{x,y}(t)$ satisfies $V^n_{x,y}(0)=0$ and
	\begin{align} \label{eq:Vode}
		\p_t V^n_{x,y}(t) &= A_x(t)\cdot V_{x,y}^n(t) + b^n_{x,y}(t) \notag
		\\&~  + \on{sym} \sum_{l=2}^{n} \sum_{\ga \in \Ga(l,n)}  c_{\gamma} 
						\frac{u^\vee(t)^{(l)}(\Ph(t,x))\cdot A^{\ga,l}(t)
						- u^\vee(t)^{(l)}(\Ph(t,y))\cdot A^{\gamma,0}(t)}{\|x-y\|^{\be}},
	\end{align}
	where $A_x(t) = du^\vee(t)(\Ph(t,x))$ and
	$
		b^n_{x,y}(t) := \frac{A_x(t) - A_y(t)}{\|x-y\|^\be} \cdot W_y^n(t).
	$
	It can be easily seen, using \eqref{eq:Younes}, that
	\begin{equation}
		\|A_x(t)\|_{L_1}\leq \|u^\vee(t)\|_{1},\quad	\|b_{x,y}^n(t)\|_{L_1} \le \|u^\vee(t)\|_{1,\be} [\Ph^\vee(t)]_1^{\be} [\Ph^\vee(t)]_n \le C_3\|u^\vee(t)\|_{1,\be}.
	\end{equation}
 	Similar arguments (see, e.g., the proof of \Cref{invcl}) show that all remaining terms 
 	in the sum can be estimated by $\|u(t)\|_{n,\be}$ times a constant
 	uniformly in $x,y$. An application of Gronwall's inequality implies that $[\Ph^\vee(t)]_{n,\be}$ is bounded in $t$, 
 	showing that $\Ph^\vee(t) \in \Id+ C^{n,\be}_b(\R^d,\R^d)$ for all $t$. Finally, we may conclude, 
 	integrating \eqref{eq:Vode} and 
 	using similar estimates, that
 	\begin{align*}
		[\Ph^\vee(t) - \Ph^\vee(t_0)]_{n,\be} = \sup_{x\ne y} \|V_{x,y}^n(t) - V_{x,y}^n(t_0) \|_{L_n}  
		&\le C_4 \int_{t_0}^t \|u^\vee(s)\|_{n,\be}  \,ds   
	\end{align*}
	which tends to $0$ 
	as $t \to t_0$. In an analogous way one sees that $\|\Ph^\vee(t) - \Ph^\vee(t_0)\|_{n} \to 0$ as $t \to t_0$. 
	This shows continuity in time.

	It remains to prove that $\Ph^\vee(I) \subseteq \cD^{n,\be}(\R^d)$. 
		For fixed $x \in \R^d$, the mapping $I \ni t \mapsto \det d \Ph(t,x)$ is continuous 
		with image in $\R \setminus \{0\}$ (since $\Ph^\vee(t)$ is a $C^1$-diffeomorphism of $\R^d$ for each $t$).
		Since $\Ph^\vee(0) = \Id$, we may conclude that $\det d \Ph(t,x) > 0$ for all $x \in \R^d$ and all $t \in I$.

	Finally, let us check that $\ph^\vee(t) = \Ph^\vee(t) - \Id \in C^{n,\be}_0(\R^d,\R^d)$ for all $t \in I$. 
	Suppose that $\|\ph(t,x)\| \not \to 0$ as $\|x \| \to \infty$. Then there is $\ep>0$ and a sequence $x_k \in \R^d$ 
	such that $\|x_k\| \to \infty$ and $\|\ph(t,x_k)\| \ge \ep$.
	Since $\Ph^\vee(s)$ is a diffeomorphism of $\R^d$, for all $s \in I$,
	\[
		\sup_{x \in \R^d} \|u(s,x + \ph(s,x))\| = \sup_{y \in \R^d} \|u(s,y)\| = \|u^\vee(s)\|_0
	\]
	and so the dominated convergence theorem implies that 
	\[
		\|\ph(t,x_k)\|  \le \int_0^t \|u(s,x_k + \ph(s,x_k))\| \, ds \to 0,
	\]
	because $\|x_k + \ph(s,x_k)\| \to \infty$ as $k \to \infty$ and $\|u(s,x)\| \to 0$ as $\|x\| \to \infty$, 
	for each $s \in I$; a contradiction.
	To see that $\|d_x^k \ph(t,x)\| \to 0$ as $\|x \| \to \infty$, for $1 \le k \le n$, we argue similarly:
		Since $s \mapsto \ph^\vee(s)$ is continuous into $C^{n,\be}_b(\R^d,\R^d)$, there is a constant 
		$C$ such that $\sup_{s \in I} \|\ph^\vee(s)\|_{n,\be} < C$. Thus Fa\`a di Bruno's formula \eqref{eq:FaadiBruno}
		implies that $\sup_{x \in \R^d} \|d_x^k (u(s,x + \ph(s,x)))\|$ is bounded above by $\|u(s)\|_k$ times a constant 
		(independent of $s$ and $x$) for all $s \in I$. 
		Then the dominated convergence theorem implies the assertion as before. 
\end{proof}

\begin{theorem} \label{thm:Trouve2}
	Let $n \in \N_{\ge 1}$ and $\be \in (0,1]$. Then 
	\begin{equation} \label{eq:Trouve2}
		\cG_{n,\be}(\R^d) = \cD^{n,\be}(\R^d)_0,
	\end{equation}
	where $\cD^{n,\be}(\R^d)_0$ denotes the connected component of the identity in $\cD^{n,\be}(\R^d)$. 	
\end{theorem}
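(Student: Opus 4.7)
The plan is to prove the two inclusions separately, using Theorem \ref{thm:Trouve1} for one direction and a concrete affine flow construction combined with continuity of right translations for the other.

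The inclusion $\cG_{n,\be}(\R^d) \subseteq \cD^{n,\be}(\R^d)_0$ is essentially immediate from the previous theorem: for $u \in \fX_{n,\be}(I,\R^d)$, Theorem \ref{thm:Trouve1} shows that the pointwise flow lifts to a continuous curve $\Ph^\vee : I \to \cD^{n,\be}(\R^d)$ with $\Ph^\vee(0) = \Id$ and $\Ph^\vee(1) \in \cG_{n,\be}(\R^d)$. Since $\cD^{n,\be}(\R^d) - \Id$ is an open subset of the Banach space $C^{n,\be}_0(\R^d,\R^d)$, hence locally path-connected, the connected component $\cD^{n,\be}(\R^d)_0$ agrees with the path-connected component of $\Id$, and $\Ph^\vee(1)$ lands there.

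For the reverse inclusion I first show that $\cG_{n,\be}(\R^d)$ contains a neighborhood of $\Id$ in $\cD^{n,\be}(\R^d)$. Given $\Ph = \Id + \ph$ close enough to $\Id$ so that the straight line $\Psi_t := \Id + t\ph$ stays inside $\cD^{n,\be}(\R^d)$ for all $t \in I$ (possible since $\cD^{n,\be}(\R^d) - \Id$ is open and $\|t\ph\|_{n,\be} \le \|\ph\|_{n,\be}$), I define the candidate vector field
\[
u(t,x) := \ph\big(\Psi_t^{-1}(x)\big), \qquad (t,x) \in I \times \R^d.
\]
Differentiation gives $\p_t \Psi_t(x) = \ph(x) = u(t,\Psi_t(x))$, so $\Psi_t$ is the pointwise flow of $u$ and $\Psi_1 = \Ph$. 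It remains to verify that $u \in \fX_{n,\be}(I,\R^d)$: writing $\Psi_t^{-1} = \Id + \tau_t$, Proposition \ref{invcl} ensures $\tau_t \in C^{n,\be}_0$ with $\|\tau_t\|_{n,\be}$ bounded uniformly in $t$ on the chosen neighborhood of $\Id$; hence $u(t,\cdot) = \ph \o (\Id + \tau_t) \in C^{n,\be}_0(\R^d,\R^d)$ by Theorem \ref{comp1} and Remark \ref{compcl}, with $\|u(t,\cdot)\|_{n,\be}$ uniformly bounded (thus integrable) via \eqref{eq:comp1}. For pointwise measurability of $t \mapsto u(t,x)$, note that $t \mapsto \Psi_t^{-1}(x)$ is continuous: this follows from Proposition \ref{prop:invHoelder} applied to the continuous curve $t \mapsto t\ph$ in $\cD^{n,\be}(\R^d) - \Id$ (continuity of $\inv_c$ into the weaker $\cD^{n,\al}$-topology suffices for evaluation at $x$). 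Consequently $u(\cdot,x)$ is even continuous, and $u \in \fX_{n,\be}(I,\R^d)$, so $\Ph \in \cG_{n,\be}(\R^d)$.

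Finally I combine openness with connectedness. Right translations on $\cD^{n,\be}(\R^d)$ are continuous by Theorem \ref{comp1}: for fixed $\Theta = \Id + \theta$, the map $\Id + \psi \mapsto \Id + \theta + \psi \o (\Id + \theta)$ is continuous and has image in $\cD^{n,\be}(\R^d)$. Since $\cG_{n,\be}(\R^d)$ is a group containing an open neighborhood $U$ of $\Id$, for every $\Ph \in \cG_{n,\be}(\R^d)$ the right translate $U \o \Ph$ is an open neighborhood of $\Ph$ contained in $\cG_{n,\be}(\R^d)$, so $\cG_{n,\be}(\R^d)$ is open in $\cD^{n,\be}(\R^d)$. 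The same argument applied at $\Ph \in \cD^{n,\be}(\R^d) \setminus \cG_{n,\be}(\R^d)$ yields an open neighborhood $U \o \Ph$ of $\Ph$ disjoint from $\cG_{n,\be}(\R^d)$, since $\Lambda \o \Ph \in \cG_{n,\be}(\R^d)$ and $\Lambda \in U \subseteq \cG_{n,\be}(\R^d)$ would force $\Ph \in \cG_{n,\be}(\R^d)$. Thus $\cG_{n,\be}(\R^d)$ is clopen in $\cD^{n,\be}(\R^d)$; intersecting with the connected set $\cD^{n,\be}(\R^d)_0$ (which contains $\Id \in \cG_{n,\be}(\R^d)$) gives $\cD^{n,\be}(\R^d)_0 \subseteq \cG_{n,\be}(\R^d)$, completing the proof.

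The main obstacle is verifying the three conditions defining $\fX_{n,\be}(I,\R^d)$ for the affine construction, particularly ensuring the measurability in time; this is where Proposition \ref{prop:invHoelder} does the crucial work of supplying continuity of the inverse in the weaker H\"older topology, even though the group itself has discontinuous left translations.
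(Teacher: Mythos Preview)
Your proof is correct and shares the paper's central construction: the affine interpolation $\Psi_t = \Id + t\ph$ together with the vector field $u(t,x) = \ph(\Psi_t^{-1}(x))$, with \Cref{invcl} and \Cref{comp1} providing the uniform $C^{n,\be}$-bounds. The differences are in packaging. For the measurability of $t \mapsto u(t,x)$, the paper appeals directly to the implicit function theorem (the defining relation $\tau(t,x) + t\ph(x+\tau(t,x)) = 0$ makes $\tau$ even $C^n$ in $t$), whereas you route through \Cref{prop:invHoelder}; both give continuity in $t$. For passing from a neighborhood of $\Id$ to all of $\cD^{n,\be}(\R^d)_0$, the paper uses a polygon argument (right-translating each segment back to start at $\Id$, where it remains an affine segment), while you use the standard clopen-subgroup argument via continuity of right translations. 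These are interchangeable: your clopen argument is cleaner topologically, while the paper's polygon argument has the advantage, noted in \Cref{rem:Trouve}, of showing that the Trouv\'e group is already exhausted by vector fields that are piecewise $C^n$ in time.
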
		

\begin{proof}
		The inclusion $\cG_{n,\be}(\R^d) \subseteq  \cD^{n,\be}(\R^d)_0$ follows from \Cref{thm:Trouve1}. 

		Let us prove $\cD^{n,\be}(\R^d)_0 \subseteq \cG_{n,\be}(\R^d)$.
		Since $\cD^{n,\be}(\R^d)_0$ is connected and locally path-connected, it is path-connected, 
		and each $\Ph \in \cD^{n,\be}(\R^d)_0$ can be connected by a polygon with the identity. 

		Let $\Ph = \Id +\ph \in \cD^{n, \be}(\R^d)_0$ be such that 
		$\ga(t):=(1-t)\Id + t\Ph \in \cD^{n,\be}(\R^d)$ for all $t \in I$. 
		Then $\ga(t)(x) = x + t \ph(x)$, and
		\[
			u(t,x) := (\ga'(t) \o \ga(t)^{-1})(x) = \ph (\ga(t)^{-1}(x))  
		\]
		is a time-dependent vector field such that: 
		\begin{itemize}
			\item $u(t,\cdot) \in C^{n,\be}_0(\R^d,\R^d)$ for all $t \in I$, since 
			$\cD^{n,\be}(\R^d)$ is a group, by \Cref{thm:group}.
			\item $u(\cdot, x)$ is a Borel function for every $x \in \R^n$; indeed, if $\ga(t)^{-1}(x) =: x + \ta(t,x)$ 
			then $\ta$ satisfies the implicit equation 
			\[
				\ta(t,x) + t \ph(x+\ta(t,x)) =0,
			\] 
			and is $C^n$ by the implicit function theorem.
			\item We have $\int_0^1 \|u(t,\cdot)\|_{n,\be} \,dt < \infty$, since inversion is locally bounded 
			on $\cD^{n,\be}(\R^d)$ and left translation maps bounded sets to bounded sets, see \Cref{comp1} and \Cref{invcl}. 
		\end{itemize}
		That means that $u \in \fX_{n,\be}(I,\R^d)$ and hence    
		$\Ph \in \mathcal{G}_{n,\be}(\R^d)$.

		Suppose we are given a polygon in $\mathcal{D}^{n,\be}(\R^d)$ with vertices $\Id,\Phi_1, \dots, \Phi_n$. 
		Then $\Ph_1 \in \mathcal{G}_{n,\be}(\R^d)$, by the previous paragraph. 
		Consider the line segment $\ga$ connecting $\Ph_1$ and $\Ph_2$. 
		Then $t\mapsto \gamma(t)\circ \Ph_1^{-1}$ connects $\Id$ with $\Phi_2\circ \Phi_1^{-1}$. 
		So, by the above, 
		$\Phi_2\circ \Phi_1^{-1} \in \mathcal{G}_{n,\be}(\R^d)$ and hence $\Phi_2 \in \mathcal{G}_{n,\be}(\R^d)$,
		since $\mathcal{G}_{n,\be}(\R^d)$ is a group.
		By iteration all vertices $\Ph_j$ belong to $\cG_{n,\be}(\R^d)$.	
\end{proof}		

\begin{remark} \label{rem:Trouve}
	Analyzing the proof one finds that the identity \eqref{eq:Trouve2} still holds if in the definition of the 
	Trouv\'e group we restrict to $u \in \fX_{n,\be}(I,\R^d)$ which are piecewise $C^n$ in time $t$.
	(Discontinuities in $t$ guarantee that $\cG_{n,\be}(\R^d)$ is a group.)  
\end{remark}

%-----------------------------------------------------------------------------------------------------------------------
\subsection{The Trouv\'e group of \texorpdfstring{$C^{n,\be\pm}_0(\R^d,\R^d)$}{Cbeta-0}} \label{ssec:pointwisepm}
%-----------------------------------------------------------------------------------------------------------------------

We define \emph{pointwise time-dependent $C^{n,\be\pm}_0$-vector fields} to be the elements of
\[
	\fX_{n,\be-}(I,\R^d) := \bigcap_{\al \in (0,\be)} \fX_{n,\al}(I,\R^d), 
	\quad \fX_{n,\be+}(I,\R^d) := \bigcup_{\al \in (\be,1)} \fX_{n,\al}(I,\R^d),
\]
respectively, 
and the corresponding Trouv\'e groups by
\[
	\cG_{n,\be\pm}(\R^d) := \big\{ \Ph_u^\vee(1) : u \in \fX_{n,\be\pm}(I,\R^d) \big\}.
\]

	\begin{theorem}
	 	\label{goal}
	 	Let $n \in \N_{\ge 1}$. 
	 	For $\be \in (0,1]$, $C^{n,\be-}_0$ is ODE closed and 
		\begin{equation}
			\cG_{n,\be-}(\R^d) = \cD^{n,\be-}(\R^d)_0,
		\end{equation}
		and, for $\be \in [0,1)$, $C^{n,\be+}_0$ is ODE closed and
		\begin{equation} \label{eq:goal2}
			\cG_{n,\be+}(\R^d) = \cD^{n,\be+}(\R^d)_0,
		\end{equation}
		In particular, $\cG_{n,\be\pm}(\R^d)$ has a $C^{0,\om}$ Lie group structure, 
		for every slowly vanishing modulus of continuity $\om$.
		Moreover, $\cG_{n,\be-}(\R^d)$ has a topological group structure and a half-Lie group structure.
	 \end{theorem}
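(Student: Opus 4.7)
The plan is to reduce both $\pm$ cases to the already established identity $\cG_{n,\be}(\R^d) = \cD^{n,\be}(\R^d)_0$ of \Cref{thm:Trouve2} by working at each intermediate H\"older exponent and then taking the appropriate projective or inductive limit. I split the argument into a forward inclusion (which delivers ODE closedness as a byproduct), a reverse inclusion via polygons, and a transport of structure.

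For the forward inclusion, if $u \in \fX_{n,\be-}(I,\R^d)$ then $u \in \fX_{n,\al}(I,\R^d)$ for every $\al < \be$. Applying \Cref{thm:Trouve1} at each such $\al$ yields a common pointwise flow $\Ph$ whose time-slices $\Ph^\vee(t)$ lie in $\bigcap_{\al < \be} \cD^{n,\al}(\R^d) = \cD^{n,\be-}(\R^d)$ and depend continuously on $t$ for the projective Fr\'echet topology. In particular $\Ph^\vee(1) \in \cD^{n,\be-}(\R^d)_0$, which simultaneously proves ODE closedness of $C^{n,\be-}_0$ and the inclusion $\cG_{n,\be-}(\R^d) \subseteq \cD^{n,\be-}(\R^d)_0$. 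For $u \in \fX_{n,\be+}(I,\R^d)$ one picks an $\al > \be$ with $u \in \fX_{n,\al}(I,\R^d)$ and applies \Cref{thm:Trouve1} at level $\al$; composition with the continuous embedding $\cD^{n,\al}(\R^d) \hookrightarrow \cD^{n,\be+}(\R^d)$ gives the analogous conclusion.

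For the reverse inclusion, every element of $\cD^{n,\be\pm}(\R^d)_0$ can be joined to $\Id$ by a polygon in $\cD^{n,\be\pm}(\R^d)$, and exactly as in the proof of \Cref{thm:Trouve2} it suffices to treat a single segment $\ga(t) := (1-t)\Id + t\Ph$ lying in $\cD^{n,\be\pm}(\R^d)$ together with the associated vector field $u(t,x) := \ph(\ga(t)^{-1}(x))$, whose pointwise flow is $\ga$. In the minus case the segment lies in every $\cD^{n,\al}(\R^d)$ with $\al < \be$, and the bookkeeping from \Cref{thm:Trouve2} based on \Cref{comp1} and \Cref{invcl} applied at each such $\al$ shows $u \in \fX_{n,\al}(I,\R^d)$ for every $\al < \be$, hence $u \in \fX_{n,\be-}(I,\R^d)$. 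In the plus case, the finitely many vertices of the polygon each lie in some $C^{n,\al_k}_0$ with $\al_k > \be$; setting $\al := \min_k \al_k > \be$ places the entire polygon in $\cD^{n,\al}(\R^d)$ (the determinant positivity is inherited from the polygon in $\cD^{n,\be+}(\R^d)$), and the same analysis gives $u \in \fX_{n,\al}(I,\R^d) \subseteq \fX_{n,\be+}(I,\R^d)$. Once these set-theoretic identities are in place, the remaining assertions transfer verbatim from \Cref{thm:C0omegaLie} via the identifications $\cG_{n,\be\pm}(\R^d) = \cD^{n,\be\pm}(\R^d)_0$.

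The main obstacle is the plus case of the reverse inclusion: a priori a polygon in the (LB)-space $C^{n,\be+}_0$ need not live inside any single step $C^{n,\al}_0$. What rescues us is the finiteness of vertices, which allows the minimum of the corresponding H\"older exponents to stay strictly above $\be$; compact regularity (\Cref{lem:cpreg}) ensures this reduction is compatible with the inductive topology used on $\cD^{n,\be+}(\R^d)$ and that the resulting $C^{0,\om}$-structure from \Cref{thm:C0omegaLie} is indeed the correct one to transport.
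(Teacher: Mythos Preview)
Your proof is correct and follows essentially the same strategy as the paper's: reduce to \Cref{thm:Trouve1}, \Cref{thm:Trouve2}, and \Cref{thm:C0omegaLie} by working at each intermediate exponent and then passing to the projective or inductive limit. The only cosmetic difference is in the plus-case reverse inclusion, where you use the finiteness of the polygon's vertices to pick a common $\alpha > \beta$, whereas the paper invokes compact regularity (\Cref{lem:cpreg}) to factor each segment through a step of the inductive limit; since a segment is affine, your observation already suffices and compact regularity is not strictly needed for this particular step.
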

	 
	\begin{proof}
		This follows from \Cref{thm:Trouve1}, \Cref{thm:Trouve2}, and \Cref{thm:C0omegaLie}. 
		To see, e.g., \eqref{eq:goal2}, note that 
		\[
			\cG_{n,\be+}(\R^d) 
			= \bigcup_{\al>\be} \cG_{n,\al}(\R^d) 
			= \bigcup_{\al>\be} \cD^{n,\al}(\R^d)_0 
		\]
		is path-connected in $\cD^{n,\be+}(\R^d) =  \bigcup_{\al>\be} \cD^{n,\al}(\R^d)$ and thus contained in 
		$\cD^{n,\be+}(\R^n)_0$. 
		The inclusion $\cD^{n,\be+}(\R^d)_0 \subseteq \cG_{n,\be+}(\R^d)$ follows from the proof of  
		\Cref{thm:Trouve2}: the line segment $\ga$ factors to some step of the inductive limit defining $\cD^{n,\be+}(\R^d)$, 
		by \Cref{lem:cpreg}.
	\end{proof}

Clearly, \Cref{rem:Trouve} also applies in this situation.

\medskip

Let us summarize the results of this section in \Cref{table:ode}.

{\small
\begin{table}[H]
\centering
\begin{tabular}{@{}lcccc@{}}
  & ODE closed & Trouv\'e group  \\ \midrule
 $C^{n,\be}_0(\R^d,\R^d)$ & yes & $\cD^{n,\be}(\R^d)_0$    \\ \midrule
 $C^{n,\be-}_0(\R^d,\R^d)$ & yes & $\cD^{n,\be-}(\R^d)_0$   \\
 \midrule
 $C^{n,\be+}_0(\R^d,\R^d)$ & yes & $\cD^{n,\be+}(\R^d)_0$  \\ 
\end{tabular}
\caption{\label{table:ode}\small{Here $n \in \N_{\ge 1}$. In the first two rows 
$\be \in (0,1]$, in the third row $\be \in [0,1)$.}}
\end{table}
}

%-----------------------------------------------------------------------------------------------------------------------
	\section{Continuity of the flow map} \label{sec:flows}
%-----------------------------------------------------------------------------------------------------------------------	

Let $n \in \N_{\ge 1}$ and $\be \in (0,1]$.
By the results of the last section every $u \in \fX_{n,\be}(I,\R^d)$ (thus every $u^\vee \in L^1(I,C^{n,\be}_0(\R^d,\R^d))$) 
has a unique pointwise flow $\Ph$ with $\Ph^\vee \in C(I,\cD^{n,\be}(\R^d))$.
The goal of this section is to show the following: 
\begin{enumerate}
	\item If $u^\vee \in L^1(I,C^{n,\be}_0(\R^d,\R^d))$ and $\al < \be$, then $\Ph^\vee$ is the unique 
	strong $\cD^{n,\al}$-flow of $u$, i.e., 
	\begin{equation} \label{eq:Banachvalued}
		\Ph^\vee(t) = \Id + \int_0^t u^\vee(s) \o \Ph^\vee (s)\,ds, \quad  t \in I, 
	\end{equation}
	in $\cD^{n,\al}(\R^d)$.
	\item The flow map $L^1(I,C^{n,\be}_0(\R^d,\R^d)) \to C(I,\cD^{n,\al}(\R^d))$, $u^\vee \mapsto \Ph^\vee$, 
	is bounded for all $n \ge 1$ and continuous, even $C^{0,\be-\al}$, if $n \ge 2$.
\end{enumerate}
We recall that the Bochner integral in \eqref{eq:Banachvalued} might not exist if $\al = \be$, because  
strong measurability of $s \mapsto u^\vee(s) \o \Ph^\vee (s)$ may fail if $u^\vee(s)_\star$ is not continuous; 
see \Cref{rem:5.2} below.

%-----------------------------------------------------------------------------------------------------------------------
\subsection{Existence of the strong \texorpdfstring{$\cD^{n,\alpha}$}{Dnalpha}-flow} \label{ssec:strongflow}
%-----------------------------------------------------------------------------------------------------------------------

	First we show that the Bochner integral in \eqref{eq:Banachvalued} exists if $\be > \al$. 
	
	\begin{lemma}
		\label{point}
		Let $n \in \N_{\ge 1}$ and $0< \al < \be \le 1$.
		Let $u\in L^1(I, C^{n,\beta}_0(\R^d,\R^d))$
		and let $\Ph:I \rightarrow \mathcal{D}^{n,\alpha}(\R^d)$ be continuous. Then:
		\begin{itemize}
			\item[(i)] 
			The mapping $I \times C^{n,\al}_0(\R^d,\R^d) \to C^{n,\al}_0(\R^d,\R^d)$, 
			$(t,\ph) \mapsto u(t)\circ (\Id + \ph)$, has the Carath\'eodory property.
			The function $t\mapsto u(t)\circ \Ph(t)$ belongs to $L^1(I, C^{n,\al}_0(\R^d,\R^d))$.
			\item[(ii)] If $\ev_x\Phi(t)=: \Phi^\wedge(t,x)$ is the pointwise flow of $u$ for the initial condition 
			$\Phi^\wedge(0,x)=x$ for all $x$, then $\Phi$ is the strong $\mathcal{D}^{n,\alpha}$-flow of $u$.
		\end{itemize}
	\end{lemma}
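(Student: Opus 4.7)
My plan is to combine the composition regularity theorems from \Cref{composition} with a simple-function approximation to overcome the non-separability of $C^{n,\al}_0(\R^d,\R^d)$. For the Carath\'eodory property in (i), the continuity of $\ph \mapsto u(t) \o (\Id + \ph)$ from $C^{n,\al}_0$ to itself is immediate from \Cref{6.2}, since $u(t) \in C^{n,\be}_0$ and $\al < \be$ falls directly under its hypothesis. For fixed $\ph \in C^{n,\al}_0$, the pull-back $\ph^\star : g \mapsto g \o (\Id + \ph)$ is continuous linear from $C^{n,\al}_0$ to $C^{n,\al}_0$ by \Cref{comp1}; since $u$ is strongly measurable into $C^{n,\be}_0$ and hence into $C^{n,\al}_0$ via the continuous inclusion of \Cref{inclusion}, the composed map $t \mapsto \ph^\star(u(t))$ is strongly measurable into $C^{n,\al}_0$.

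For the $L^1$ membership of $t \mapsto u(t) \o \Ph(t)$, the norm estimate is direct from \Cref{comp1}:
\[
\|u(t) \o \Ph(t)\|_{n,\al} \le M\,\|u(t)\|_{n,\al}\,(1 + \|\Ph(t) - \Id\|_{n,\al})^{n+\al},
\]
and the right-hand side is integrable because $\|u(\cdot)\|_{n,\al}$ is dominated by $2\|u(\cdot)\|_{n,\be} \in L^1(I)$ by \Cref{inclusion}, while $\|\Ph(t)-\Id\|_{n,\al}$ is bounded on the compact interval $I$ by continuity of $\Ph$. The delicate point is strong measurability of the composed function, which I plan to handle by approximation: fix simple functions $s_k : I \to C^{n,\be}_0$ with $s_k \to u$ a.e.\ in $C^{n,\be}_0$. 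Each $s_k(\cdot) \o \Ph(\cdot) = \sum_j \chi_{A_j}(\cdot)\,(v_j \o \Ph(\cdot))$ is strongly measurable because every piece $t \mapsto v_j \o \Ph(t) = (v_j)_\star(\ph(t))$ is continuous by \Cref{6.2} (note $v_j \in C^{n,\be}_0$ and $\al < \be$). At each $t$ where $s_k(t) \to u(t)$ in $C^{n,\be}_0$, hence in $C^{n,\al}_0$ by \Cref{inclusion}, continuity of $\ph(t)^\star$ from \Cref{comp1} yields $s_k(t) \o \Ph(t) \to u(t) \o \Ph(t)$ in $C^{n,\al}_0$ (using \Cref{compcl} to confirm the limit lies in $C^{n,\al}_0$ and not merely $C^{n,\al}_b$), so the pointwise a.e.\ limit is strongly measurable.

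For (ii), I will exploit that evaluation $\ev_x : C^{n,\al}_0 \to \R^d$ is bounded linear and therefore commutes with the Bochner integral constructed in (i). Applying $\ev_x$ to $\Id + \int_0^t u(s) \o \Ph(s)\,ds$ produces $x + \int_0^t u^\wedge(s,\Ph^\wedge(s,x))\,ds$, which by the hypothesis that $\Ph^\wedge$ is the pointwise flow of $u$ equals $\Ph^\wedge(t,x) = \ev_x \Ph(t)$. As this holds for every $x \in \R^d$, the identity
\[
\Ph(t) = \Id + \int_0^t u(s) \o \Ph(s)\,ds
\]
holds as functions $\R^d \to \R^d$; since the integral lies in $C^{n,\al}_0$ by (i) and $\Ph(t) \in \cD^{n,\al}(\R^d)$ by assumption, this identifies $\Ph$ as the strong $\cD^{n,\al}$-flow of $u$. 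I expect the main obstacle to be the strong measurability argument in (i); the remainder is a careful assembly of the composition regularity theorems already proved.
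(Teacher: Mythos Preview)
Your proof is correct and follows the same overall architecture as the paper's. The only substantive difference is in how you establish strong measurability of $t \mapsto u(t) \circ \Ph(t)$: the paper simply invokes \cite[Lemma 2.2]{AulbachWanner96}, a general result asserting that if $f$ has the Carath\'eodory property and $\Ph$ is continuous then $t \mapsto f(t,\Ph(t))$ is strongly measurable, whereas you unpack this step explicitly via simple-function approximation of $u$ combined with the continuity of $(v_j)_\star$ from \Cref{6.2} and of $\ph(t)^\star$ from \Cref{comp1}. Your argument is essentially a direct proof of the relevant special case of that cited lemma, exploiting the specific bilinear-type structure of $(t,\ph) \mapsto u(t) \circ (\Id + \ph)$; it is more self-contained but also more verbose. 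Part (ii) is identical to the paper's treatment.
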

	
	\begin{proof}
		(i) That $(t,\ph) \mapsto u(t)\circ (\Id + \ph)$ has the Carath\'eodory property follows easily from 
		\Cref{comp1} and \Cref{6.2}. Together with continuity of $\Phi$, an application of \cite[Lemma 2.2]{AulbachWanner96} yields strong 
		measurability of $t \mapsto u(t)\circ \Ph(t)$. Integrability follows from \Cref{comp1}.
			
			(ii) Recall that $\ev_x$ is continuous and linear on $C^{n,\alpha}_0(\R^d,\R^d)$ and thus commutes with the Bochner integral. Observe also that $s\mapsto u(s) \circ \Phi(s)$ is Bochner integrable in $C^{n,\al}_0(\R^d,\R^d)$, by (i). 
			Since $\Phi^\wedge(t,x)$ is the pointwise flow, we have
			\begin{align*} 
				\ev_x \Ph(t) = \Phi^\wedge(t,x)=x+\int_0^t u(s)(\Phi^\wedge(s,x)) \, ds
				= \ev_x \Big( \Id + \int_0^tu(s)\circ \Phi(s) \, ds  \Big).
			\end{align*}
			Since the family of evaluation maps is point separating on $C^{n,\alpha}_0(\R^d,\R^d)$, we are done.
	\end{proof}
	
	\begin{remark} \label{rem:5.2}
		In general, the function $t\mapsto u(t)\circ \Ph(t)$ cannot belong to $L^1(I, C^{n,\be}_0(\R^d,\R^d))$, 
		even if $u$ is a constant and  
		$\Ph$ is continuous into $\cD^{N,1}(\R^d)$ for all $N \ge n$. Indeed: 
		Let $\chi \in C^\infty_c(\R)$ be $1$ on $[-1,1]$, and let $\ps(x):= x^n |x|^\beta \ch(x)$ be the 
	    $C^{n,\be}_0$-function from the proof of \Cref{disc} and \Cref{example:strongmeasurability}.
	    Taking $u(t) := \ps$ and $\Ph(t)(x) := x + t \ch(x)$ we get 
	    $(u(t) \o \Ph(t))(x) = \ps(x+t)$ if $x \in [-1,1]$. So \Cref{example:strongmeasurability} shows that 
	    $t\mapsto u(t)\circ \Ph(t)$ is not strongly measurable.
	\end{remark}

	\begin{theorem}
		\label{odesolv}
		Let $n \in \N_{\ge 1}$ and $0<\al < \be \le 1$.
		Let $u \in L^1(I, C^{n,\beta}_0(\R^d,\R^d))$. Then $u$ has a unique strong $\mathcal{D}^{n,\alpha}$-flow $\Ph$. 
	\end{theorem}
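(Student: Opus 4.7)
The plan is to assemble the theorem from two results already at hand: \Cref{thm:Trouve1}, which produces a pointwise flow with good regularity, and \Cref{point}, which upgrades such a pointwise flow to a strong $\cD^{n,\alpha}$-flow provided one accepts a loss of regularity.

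First I would note that $u \in L^1(I, C^{n,\be}_0(\R^d,\R^d))$ implies $u^\wedge \in \fX_{n,\be}(I,\R^d)$, since Bochner integrability entails measurability of the scalar map $t \mapsto \|u(t)\|_{n,\be}$ and pointwise measurability in the $x$-variable through evaluation functionals. Invoking \Cref{thm:Trouve1}, $u^\wedge$ has a unique pointwise flow $\Ph$ whose associated curve satisfies $\Ph^\vee \in C(I, \cD^{n,\be}(\R^d))$. The continuous inclusion $C^{n,\be}_0 \hookrightarrow C^{n,\al}_0$ from \Cref{inclusion} yields $\Ph^\vee \in C(I, \cD^{n,\al}(\R^d))$ as well.

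For existence of the strong $\cD^{n,\al}$-flow I would simply apply \Cref{point}: part~(i) uses the joint continuity of composition from $C^{n,\be}_0 \times C^{n,\al}_0$ into $C^{n,\al}_0$ (\Cref{jcont}) to furnish the Carath\'eodory property for $(t,\ph) \mapsto u(t) \circ (\Id+\ph)$, so that $s \mapsto u(s) \o \Ph^\vee(s)$ is strongly measurable; the integrability bound then follows from \Cref{comp1}. Part~(ii) then identifies $\Ph^\vee$ as the strong $\cD^{n,\al}$-flow, via the point-separating family $\{\ev_x\}_{x \in \R^d}$ on $C^{n,\al}_0(\R^d,\R^d)$.

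For uniqueness, suppose $\tilde\Ph : I \to \cD^{n,\al}(\R^d)$ is a continuous curve satisfying \eqref{eq:Banachvalued} in $\cD^{n,\al}(\R^d)$. Evaluating at an arbitrary $x \in \R^d$ and commuting $\ev_x$ with the Bochner integral shows that $\tilde\Ph^\wedge(\cdot,x)$ solves the pointwise ODE \eqref{eq:pointwiseflow}. By the uniqueness statement of \Cref{thm:Trouve1} (which in turn rests on the classical pointwise Carath\'eodory uniqueness for vector fields that are globally Lipschitz in $x$, since $C^{n,\be}_0 \subseteq C^1_0$ with $n\ge 1$), we conclude $\tilde\Ph^\wedge = \Ph^\wedge$ and hence $\tilde\Ph = \Ph$.

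The only genuine subtlety — already isolated in \Cref{rem:5.2} — is that the strict inequality $\al < \be$ cannot be dropped: on the diagonal $\al = \be$ the left translation $u(t)_\star$ need not be continuous on $C^{n,\be}_0$, so $s \mapsto u(s) \o \Ph^\vee(s)$ may fail to be strongly measurable in $C^{n,\be}_0$ and the Bochner integral in \eqref{eq:Banachvalued} need not exist at the same H\"older level. The proof above exploits precisely this gap of regularity.
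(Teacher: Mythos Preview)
Your proof is correct and follows essentially the same route as the paper: invoke \Cref{thm:Trouve1} to obtain the unique pointwise flow $\Ph$ with $\Ph^\vee \in C(I,\cD^{n,\be}(\R^d)) \subseteq C(I,\cD^{n,\al}(\R^d))$, then apply \Cref{point} to upgrade it to the strong $\cD^{n,\al}$-flow. You spell out the uniqueness step (any strong flow evaluates to a pointwise flow, hence coincides with $\Ph$) more explicitly than the paper does, but the argument is the same.
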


	\begin{proof}
		By \Cref{thm:Trouve1}, $u$ has a unique pointwise flow $\Ph : I \times \R^d \to \R^d$ such that 
		$\Ph^\vee \in C(I,\cD^{n,\be}(\R^d))$. 
		For $\al < \be$, we have $\Ph^\vee \in C(I,\cD^{n,\al}(\R^d))$. 
		By \Cref{point}, $\Ph^\vee$ is the unique strong $\cD^{n,\al}$-flow of $u$, i.e., it satisfies \eqref{eq:Banachvalued}.
	\end{proof}

	\begin{remark}
		One can use Carath\'eodory's solution theory for ODEs on Banach spaces 
		which are Bochner integrable in time (cf.\ \Cref{carath}) to give an alternative 
		proof which, however, does not work for $n=1$!
		Indeed, using \Cref{point}, \Cref{Lipschitz}, and \Cref{cara} one can show that $u$ has a unique 
		strong $\cD^{n-1,\al}$-flow $\Ph$. That the flow $\Ph = \Id + \ph$ is actually strongly $\cD^{n,\al}$-valued follows from 
		the observation that $d\ph$ satisfies the linear ODE  
		\begin{align*} 
			d \ph(t) &= d\int_0^t u(s)\circ (\Id+\ph(s))\, ds 
			=\int_0^t du(s)\circ(\text {Id}+\ph(s))\cdot (\mathbb{1}+d\ph)(s) \, ds,
		\end{align*}
		and from \Cref{lincara}.
	\end{remark}

%-----------------------------------------------------------------------------------------------------------------------
\subsection{Continuity of the flow map}
%-----------------------------------------------------------------------------------------------------------------------

First we prove that the flow map is bounded.

	\begin{proposition} \label{odebounded}
		Let $n \in \N_{\ge 1}$ and $\be \in (0,1]$.
		The flow map $L^1(I,C^{n,\be}_0(\R^d,\R^d)) \to C(I,\cD^{n,\be}(\R^d))$, $u \mapsto \Ph$, 
		is bounded.
	\end{proposition}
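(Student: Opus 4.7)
The plan is to revisit the proof of \Cref{thm:Trouve1}, which constructs the pointwise flow $\Ph^\vee \in C(I,\cD^{n,\be}(\R^d))$, and to observe that all the a priori estimates there depend only on $\|u\|_{L^1(I,C^{n,\be})}$. Since boundedness here means that bounded subsets of $L^1(I,C^{n,\be}_0(\R^d,\R^d))$ get mapped to bounded subsets of $C(I,\cD^{n,\be}(\R^d))$, and the latter is metrized by $\sup_{t \in I}\|\Ph^\vee(t)-\Id\|_{n,\be}$, it suffices to produce a monotone function $F : [0,\infty) \to [0,\infty)$ such that
\[
	\sup_{t\in I}\|\Ph^\vee(t) - \Id\|_{n,\be} \le F(\|u\|_{L^1(I,C^{n,\be})}).
\]

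First I would record the $C^n$-estimate for free: by \eqref{eq:Younes} (which is just the standard Gronwall bound applied to the derivatives $W^k_x(t) = d_x^k\Ph(t,x)$ for $1\le k\le n$),
\[
	\sup_{t \in I}\|\Ph^\vee(t) - \Id\|_n \le C_1 e^{C_2\|u\|_{L^1(I,C^n_b)}} \le C_1 e^{C_2 \|u\|_{L^1(I,C^{n,\be})}},
\]
using \Cref{inclusion}. In particular $[\Ph^\vee(t)]_1$ is bounded by the same quantity, and by \Cref{help0} the same holds for $\sup_{x} \|d\Ph(t,x)^{-1}\|$.

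Next I would control the H\"older seminorm $[\Ph^\vee(t)]_{n,\be}$ via the quantity
\[
	V^n_{x,y}(t) := \frac{W^n_x(t) - W^n_y(t)}{\|x-y\|^\be},
\]
which by the computation in the proof of \Cref{thm:Trouve1} satisfies the linear inhomogeneous ODE
\[
	\p_t V^n_{x,y}(t) = A_x(t) \cdot V^n_{x,y}(t) + R^n_{x,y}(t), \qquad V^n_{x,y}(0) = 0,
\]
with $A_x(t) = du^\vee(t)(\Ph(t,x))$ and with remainder $R^n_{x,y}$ collecting the $b^n_{x,y}$ term together with the terms coming from the lower $l$'s in \eqref{eq:Vode}. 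Using \eqref{eq:Younes}, the already-established uniform bounds on $[\Ph^\vee(t)]_k$ for $k\le n$, and the Fa\`a di Bruno expansion exactly as in the proof of \Cref{thm:Trouve1} (expanding each difference $u^{(l)}(\Ph(t,x))(A^{\ga,l}) - u^{(l)}(\Ph(t,y))(A^{\ga,0})$ telescopically), one obtains
\[
	\|R^n_{x,y}(t)\|_{L_n} \le C_3 \|u^\vee(t)\|_{n,\be}
\]
with $C_3$ depending only on $\|u\|_{L^1(I,C^{n,\be})}$ (through the bounds on $\|\Ph^\vee(t)\|_n$). Since also $\|A_x(t)\|_{L_1} \le \|u^\vee(t)\|_1 \le \|u^\vee(t)\|_{n,\be}$, Gronwall yields
\[
	\sup_{x\ne y} \|V^n_{x,y}(t)\|_{L_n} \le C_3\|u\|_{L^1(I,C^{n,\be})}\, e^{\|u\|_{L^1(I,C^{n,\be})}},
\]
that is, $\sup_{t\in I}[\Ph^\vee(t)]_{n,\be} \le F(\|u\|_{L^1(I,C^{n,\be})})$ for an explicit $F$. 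Combined with the $C^n$-bound this gives boundedness of the flow map.

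The only step requiring real attention is the estimate $\|R^n_{x,y}(t)\|_{L_n} \le C_3 \|u^\vee(t)\|_{n,\be}$, since this is where the non-linear coupling between $u$ and $\Ph$ must be unwound. However, this is exactly the type of telescoping argument already carried out in the proofs of \Cref{invcl} and \Cref{thm:Trouve1}, so no new idea is needed — one simply has to check that every constant appearing depends monotonically on $\|\Ph^\vee\|_n$, and hence ultimately on $\|u\|_{L^1(I,C^{n,\be})}$, but not on $u$ itself in any finer way.
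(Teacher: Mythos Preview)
Your proposal is correct and follows essentially the same route as the paper: obtain the $C^n$-bound via \eqref{eq:Younes} (the paper redoes this by a short induction, but it is the same Gronwall estimate), and then control $[\Ph^\vee(t)]_{n,\be}$ by the Fa\`a di Bruno/telescoping argument of \Cref{invcl} and \Cref{thm:Trouve1} together with Gronwall. The only cosmetic difference is that you phrase the H\"older step via the ODE for $V^n_{x,y}$ (as in \Cref{thm:Trouve1}), whereas the paper writes it in integral form, isolating the single term $\|u(s)\|_n\,\|\ph(s)\|_{n,\be}$ on which Gronwall is applied; the estimates and the dependence of all constants solely on $\|u\|_{L^1(I,C^{n,\be})}$ are identical.
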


	In the proof we use only pointwise estimates; thus the result still holds if $u^\wedge$ varies in 
	$\fX_{n,\be}(I,\R^d)$ endowed with the norm $\|u\|_{L^1(I,C^{n,\be}_b)}$.  

	\begin{proof}
		We first claim that $u \mapsto \ph:= \Ph -\Id$ is bounded into $C(I,C^n_b(\R^d,\R^d))$. 
We proceed by induction on $n$.
For simplicity of notation we simply write $\ph(t,x)$ instead of $\ph^\wedge(t,x)$, etc.
Clearly 
\[
		\|\ph(t,x)\| \le \int_0^t \|u(s,\Ph(s,x))\| \,ds \le \|u\|_{L^1(I,C^{n,\be}_b)}
\]
and hence 
\[
	\|\ph\|_{C(I,C^0_b)} \le \|u\|_{L^1(I,C^{n,\be}_b)}.
\]
Assume that $u \mapsto \ph$ is bounded into $C(I,C^{n-1}_b(\R^d,\R^d))$.
		By Fa\`a di Bruno's formula \eqref{eq:FaadiBruno}, 
		\begin{align*} 
				d_x^n(u(s)\circ \Ph(s))(x)
				&= d_x u(s)(\Ph(s,x))(d_x^n \Ph(s,x))\\
				& + \on{sym} \sum_{l=2}^{n} \sum_{\ga \in \Ga(l,n)}
				 c_{\ga} u(s)^{(l)}(\Ph(s,x))\big(d_x^{\ga_1}\Ph(s,x), \dots, d_x^{\ga_l} \Ph(s,x)\big).
		\end{align*}
		Hence
		\begin{align*}
				\MoveEqLeft 
				\|d_x^n(u(s)\circ \Ph(s))\|_{0} \\
				&\le   
				\|u(s)\|_{1} [\Ph(s)]_n + \sum_{l=2}^{n} \sum_{\ga \in \Ga(l,n)}
				 c_{\ga} \|u(s)\|_{l} [\Ph(s)]_{\ga_1} \cdots [\Ph(s)]_{\ga_l}
		\end{align*}
		and so, by induction hypothesis and since $[\Ph(s)]_n \le 1 + [\ph(s)]_n$, 
		\begin{align*}
			[\ph(t)]_{n} &\le \int_0^t \|d_x^n(u(s)\circ \Ph(s))\|_{0} \,ds
			\\
			&
			\le \int_0^t \|u(s)\|_{1} [\ph(s)]_{n} \,ds +  C \|u\|_{L^1(I,C^{n,\be}_b)}.
		\end{align*}
Gronwall's lemma implies that    
\[
	\|\ph\|_{C(I,C^n_b)} \le C \|u\|_{L^1(I,C^{n,\be}_b)} \exp (\|u\|_{L^1(I,C^{n,\be}_b)}), 
\]
and the claim is proved. 

It remains to show that $u \mapsto \ph$ is bounded into $C(I,C^{n,\al}_b(\R^d,\R^d))$. To this end consider 
\begin{align*}
				\MoveEqLeft 
				d_x^n(u(s)\circ \Ph(s))(x) - d_x^n(u(s)\circ \Ph(s))(y)
				\\
				&= \!\on{sym}\! \sum_{l=1}^{n} \sum_{\ga \in \Ga(l,n)}
				 c_{\ga} \big(u(s)^{(l)}(\Ph(s,x))(A^{\ga,l}(x,y)) - u(s)^{(l)}(\Ph(s,y))(A^{\ga,0}(x,y)) \big),
\end{align*}
where 
\[
	A^{\ga,j}= A^{\ga,j}(x,y) :=  \big(d_x^{\ga_1}\Ph(s,x), \dots, d_x^{\ga_j}\Ph(s,x), 
	d_x^{\ga_{j+1}}\Ph(s,y),  \dots, d_x^{\ga_l}\Ph(s,y) \big).
\]
Then
\begin{align*}
				\MoveEqLeft
				\big\| u(s)^{(l)}(\Ph(s,x))(A^{\gamma,l}) -  u(s)^{(l)}(\Ph(s,y))(A^{\gamma,0})\big\|_{L_n}\\
			&\leq \big\| u(s)^{(l)}(\Ph(s,x))(A^{\gamma,l}) - u(s)^{(l)}(\Ph(s,y))(A^{\gamma,l})\big\|_{L_n}\\
				& \quad + 
				\sum_{k=1}^{l} \big\| u(s)^{(l)}(\Ph(s,y))(A^{\gamma,k}) 
				- u(s)^{(l)}(\Ph(s,y))(A^{\gamma,k-1})\big\|_{L_n}.
		\end{align*}
For the first summand		
\begin{align*}
	\MoveEqLeft
	\big\| u(s)^{(l)}(\Ph(s,x))(A^{\gamma,l}) - u(s)^{(l)}(\Ph(s,y))(A^{\gamma,l})\big\|_{L_n}
	\\
	&\le \big\| u(s)^{(l)}(\Ph(s,x)) - u(s)^{(l)}(\Ph(s,y))\big\|_{L_n} (1 + \|\ph(s)\|_{n})^n
	\\
	&\le \begin{cases}
		\| u(s)\|_{n} [\Ph(s)]_{1} \| x- y\|  (1 + \|\ph(s)\|_{n})^n & \text{ if } l < n, \\ 
		\| u(s)\|_{n,\be} [\Ph(s)]_{1}^\be \| x- y\|^\be  (1 + \|\ph(s)\|_{n})^n & \text{ if } l = n.
	\end{cases} 
\end{align*}
For the other summands 
		observe that
		\begin{align*}
			&u(s)^{(l)} (\Ph(s,y))(A^{\gamma,k}) 
				- u(s)^{(l)}(\Ph(s,y))(A^{\gamma,k-1})\\
			&=  u(s)^{(l)}(\Ph(s,y))\big(  \dots, d_x^{\ga_{k-1}}\Ph(s,x),
			 d_x^{\ga_k} \Ph(s,x) - d_x^{\ga_k}\Ph(s,y) , d_x^{\ga_{k+1}}\Ph(s,y),\dots \big),
		\end{align*}
		whence, if $l \ge 2$ and hence $\ga_k \le n-1$, 
		\begin{align*}
			\MoveEqLeft
			\big\|u(s)^{(l)}(\Ph(s,y))(A^{\gamma,k}) 
				- u(s)^{(l)}(\Ph(s,y))(A^{\gamma,k-1})\big\|_{L_n}\\
			&\le \| u(s)\|_{n} (1 + \|\ph(s)\|_{n})^{n-1}
			   \|\ph(s)\|_{n}\|x-y\|.
		\end{align*}
For $l =1$, we have 		
\begin{align*}
			\MoveEqLeft
			\big\|du(s)(\Ph(s,y))(d_x^n\Ph(s,x)) 
				- du(s)(\Ph(s,y))(d_x^n\Ph(s,y))\big\|_{L_n}
				\\
			&\le \| u(s)\|_{n} 
			   \|\ph(s)\|_{n,\be} \|x-y\|^\be.
\end{align*}
These estimates, together with the fact that $u \mapsto \ph$ is bounded into $C(I,C^n_b(\R^d,\R^d))$, imply
\begin{align*}
			\|\ph(t)\|_{n,\be} &\le \int_0^t \|u(s)\circ \Ph(s)\|_{n,\be} \,ds
			\\
			&\le C \int_0^t \|u(s)\|_{n,\be} \|\ph(s)\|_{n,\be} \,ds +  C \|u\|_{L^1(I,C^{n,\be}_b)},
		\end{align*}
and Gronwall's inequality yields the assertion.
\end{proof}

	\begin{theorem} \label{thm:flowcontinuous}
		Let $n \in \N_{\ge 2}$ and $0 < \al < \be \le 1$.
		Then the flow map $L^1(I,C^{n,\be}_0(\R^d,\R^d)) \to C(I,\cD^{n,\al}(\R^d))$, $u \mapsto \Ph$, 
		is continuous, even $C^{0,\be-\al}$. 	
	\end{theorem}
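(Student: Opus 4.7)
Fix a bounded subset $\cU \subseteq L^1(I,C^{n,\be}_0(\R^d,\R^d))$ and take $u_1,u_2\in\cU$ with flows $\Ph_i = \Id+\ph_i$. \Cref{odebounded} already provides a uniform bound $\|\ph_i\|_{C(I,C^{n,\be}_b)} \le K = K(\cU)$, so in particular $\|\ph_1-\ph_2\|_{C(I,C^{n,\be}_b)}\le 2K$. Since everything in sight is Banach-valued, \Cref{lem:C0om} reduces the $C^{0,\be-\al}$ claim to the pointwise Hölder estimate
\[
\|\ph_1-\ph_2\|_{C(I,C^{n,\al}_b)} \le C(\cU)\,\|u_1-u_2\|_{L^1(I,C^{n,\be})}^{\be-\al}.
\]
My plan is first to prove the sharper integer-order estimate $\|\ph_1-\ph_2\|_{C(I,C^n_b)} \le C(\cU)\,\|u_1-u_2\|_{L^1(I,C^{n,\be})}^{\be}$, and then to interpolate this against the uniform $C^{n,\be}$ bound via \Cref{exp}.

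For the integer-order estimate I induct on $k = 0,1,\dots,n$. Differentiating $\ph_i(t)=\int_0^t u_i(s)\o\Ph_i(s)\,ds$ $k$ times in $x$ with Faà di Bruno's formula \eqref{eq:FaadiBruno} and subtracting, the $l=1$ term (which for $k\ge 2$ equals $du_i(\Ph_i)(d^k\ph_i)$ since $d^k\Id=0$, and for $k=1$ is handled analogously) produces, after the split $du_1(\Ph_1)(d^k\ph_1) - du_2(\Ph_2)(d^k\ph_2) = du_2(\Ph_2)\cdot d^k(\ph_1-\ph_2) + [\text{l.o.t.}]$, a Gronwall self-coupling plus a forcing; the remaining Faà di Bruno terms (indexed by $l\ge 2$ and $\ga$ with $|\ga|=k$, hence $\ga_j\le n-1$) are expanded as
\[
u_1^{(l)}(\Ph_1)(A_1)-u_2^{(l)}(\Ph_2)(A_2) = (u_1^{(l)}-u_2^{(l)})(\Ph_1)(A_1) + [u_2^{(l)}(\Ph_1)-u_2^{(l)}(\Ph_2)](A_1) + u_2^{(l)}(\Ph_2)(A_1-A_2).
\]
For $k\le n-1$ every index satisfies $l+1\le n$, so $u_2^{(l+1)}$ is bounded, and the middle piece is controlled linearly by $\|u_2^{(l+1)}\|_0\,\|\ph_1-\ph_2\|_0$ via the mean value inequality; the $A_1-A_2$ piece couples only to $\|\ph_1-\ph_2\|_j$ with $j<k$. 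Gronwall, together with the inductive bounds on the lower norms, then yields the linear Lipschitz estimate $\|\ph_1-\ph_2\|_{C(I,C^k_b)}\le C\,\|u_1-u_2\|_{L^1(I,C^{n,\be})}$. This is exactly where the hypothesis $n\ge 2$ is used: even the starting step $k=1$ needs $u_2\in C^2$ so that $du_2$ is Lipschitz.

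The main obstacle is the final step $k=n$: the top Faà di Bruno term at $l=n$, $\ga=(1,\dots,1)$ forces one to estimate $u_2^{(n)}(\Ph_1)-u_2^{(n)}(\Ph_2)$, and $u_2^{(n)}$ is only $\be$-Hölder, not Lipschitz. I therefore replace the mean value step by the Hölder bound $\|u_2^{(n)}(\Ph_1)-u_2^{(n)}(\Ph_2)\|_0\le\|u_2\|_{n,\be}\,\|\ph_1-\ph_2\|_0^{\be}$ and plug in the $k=0$ linear bound $\|\ph_1-\ph_2\|_0\le C\,\|u_1-u_2\|_{L^1}$; the resulting forcing has size $\lesssim \|u_1-u_2\|_{L^1(I,C^{n,\be})}^{\be}$, which on the bounded set $\cU$ dominates the linear contributions, and Gronwall yields
\[
\|\ph_1-\ph_2\|_{C(I,C^n_b)} \le C(\cU)\,\|u_1-u_2\|_{L^1(I,C^{n,\be})}^{\be}. \quad (\star)
\]
To conclude, I apply \Cref{exp} with $(\al,\be,\ga)\rightsquigarrow(0,\al,\be)$, namely $\|f\|_{n,\al}\le 2\,\|f\|_n^{(\be-\al)/\be}\|f\|_{n,\be}^{\al/\be}$, at $f=\ph_1(t)-\ph_2(t)$; combining $(\star)$ with $\|\ph_1-\ph_2\|_{C(I,C^{n,\be}_b)}\le 2K$ gives
\[
\|\ph_1-\ph_2\|_{C(I,C^{n,\al}_b)} \le C\,\big(\|u_1-u_2\|_{L^1}^{\be}\big)^{(\be-\al)/\be}(2K)^{\al/\be} = C'\,\|u_1-u_2\|_{L^1(I,C^{n,\be})}^{\be-\al},
\]
which is exactly the advertised $C^{0,\be-\al}$ estimate; note how the interpolation exponent $(\be-\al)/\be$ precisely cancels the factor $\be$ lost in $(\star)$.
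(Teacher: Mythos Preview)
Your argument is correct and takes a genuinely different route from the paper's.

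The paper never works at the integer level and never interpolates. Instead it exploits the H\"older composition machinery directly: first it uses \Cref{Lipschitz} (which needs $g\in C^{(n-1)+1,\be}=C^{n,\be}$) together with \Cref{comp1} to bound $\|\ph-\ps\|_{n-1,\al}$ linearly by $\|u-v\|_{L^1(I,C^{n,\be})}$ via Gronwall; then it differentiates once, applies \Cref{bilinear} and \Cref{comp1} to the bilinear pieces, and invokes \Cref{6.2} on $dv\in C^{n-1,\be}$ to estimate $\|dv\circ\Ph-dv\circ\Ps\|_{n-1,\al}\le M\|v\|_{n,\be}\,\|\ph-\ps\|_{n-1,\al}^{\be-\al}$. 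The exponent $\be-\al$ thus appears \emph{directly} from \Cref{6.2}, and a second Gronwall closes the argument at the $C^{n,\al}$ level.

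Your route bypasses \Cref{6.2} and \Cref{Lipschitz} entirely: you run the pointwise Fa\`a di Bruno/Gronwall induction purely in $C^k_b$ for $k\le n$, using only the mean value inequality for $u^{(l)}$ with $l\le n-1$ and the pointwise $\be$-H\"older continuity of $u^{(n)}$ at the top step, obtaining $\|\ph_1-\ph_2\|_{C(I,C^n_b)}\lesssim\|u_1-u_2\|_{L^1}^{\be}$; then you recover the target H\"older norm via \Cref{exp} and the uniform $C^{n,\be}$ bound from \Cref{odebounded}. The cancellation $\be\cdot(\be-\al)/\be=\be-\al$ is exactly right. This is more elementary in that it needs no H\"older-norm composition estimate, only the interpolation inequality; the paper's approach, in turn, is shorter once those lemmas are available and never needs the a~priori $C^{n,\be}$ bound on the flows (only the $C^{n,\al}$ bound).
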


	We do not know if the theorem also holds for $n =1$ or for $\al = \be$.

	\begin{proof}
		Fix $u_0 \in L^1(I,C^{n,\be}_0(\R^d,\R^d))$ and let $u, v \in L^1(I,C^{n,\be}_0(\R^d,\R^d))$ be in the ball with radius $\de>0$ and center $u_0$ in $L^1(I,C^{n,\be}_0(\R^d,\R^d))$.  
		Consider the corresponding flows
		$\Ph=\Id + \ph,\Ps = \Id + \ps \in C(I,\cD^{n,\al}(\R^d))$. 
		By \Cref{odebounded}, there is a constant $C= C(u_0,\de)>0$ such that 
		\begin{equation*}
			\|\ph\|_{C(I,C^{n,\al}_b)} \le C,\quad \|\ps\|_{C(I,C^{n,\al}_b)} \le C.
		\end{equation*}
		By \Cref{Lipschitz}, \Cref{comp1}, and \Cref{odesolv}, 
		\begin{align*}
			\MoveEqLeft
			\|\ph(t) - \ps(t)\|_{n-1,\al} 
			\le \int_0^t \|u(s)\circ\Ph(s) - v(s)\circ\Ps(s)\|_{n-1,\al} \,ds
			\\
			& \le \int_0^t \|u(s)\circ\Ph(s) - u(s)\circ\Ps(s)\|_{n-1,\al} + \|(u(s) - v(s))\circ\Ps(s)\|_{n-1,\al} \,ds \\
			& \le C_1 \int_0^t \|u(s)\|_{n,\be} \|\ph(s) - \ps(s)\|_{n-1,\al} 	
			+ \|u(s) - v(s)\|_{n-1,\al} \,ds	
		\end{align*}
		and so, by Gronwall's lemma (and \Cref{inclusion}),
		\begin{align} \label{Gronwall}
		 	\|\ph(t) - \ps(t)\|_{n-1,\al} 
		 	&\le C_1 \|u - v\|_{L^1(I,C^{n,\be}_b)} \exp (C_1 \|u\|_{L^1(I,C^{n,\be}_b)})
		 	\notag \\
		 	&=:
		 	C_2 \|u - v\|_{L^1(I,C^{n,\be}_b)}.
		 \end{align} 
		This proves that $u \mapsto \Ph$ is continuous into $C(I,\cD^{n-1,\al}(\R^d))$.  
		Applying $d =d_x$,
		\begin{align*}
			\MoveEqLeft
		 	\|d\ph(t) - d\ps(t)\|_{n-1,\al} 
			\le \int_0^t \|(d u(s)\circ\Ph(s))d\Ph(s) -  (dv(s)\circ\Ps(s)) d\Ps(s)\|_{n-1,\al} \,ds 
			\\
			&\le \int_0^t \|(d u(s)\circ\Ph(s))(d\Ph(s) -  d\Ps(s))\|_{n-1,\al} \,ds 
			\\
			&\quad + 
			\int_0^t \|(d u(s)\circ\Ph(s) - dv(s)\circ\Ps(s))   d\Ps(s))\|_{n-1,\al} \,ds.
		 \end{align*} 
		 By \Cref{bilinear} and \Cref{comp1},
		 \begin{align*}
		 	\MoveEqLeft
		 	\|(d u(s)\circ\Ph(s))(d\Ph(s) -  d\Ps(s))\|_{n-1,\al} 
		 	\\
		 	&\le 2^n 
		 	\|d u(s)\circ\Ph(s)\|_{n-1,\al} \|d\Ph(s) -  d\Ps(s)\|_{n-1,\al}
		 	\\
		 	&\le 2^n M 
		 	\|d u(s)\|_{n-1,\al} (1+\|\ph(s)\|_{n-1,\al})^n \|d\Ph(s) -  d\Ps(s)\|_{n-1,\al}
		 	\\
		 	&\le C_3 
		 	\|u(s)\|_{n,\be}  \|d\Ph(s) -  d\Ps(s)\|_{n-1,\al} 
		 	\intertext{and} 
		 	\MoveEqLeft
		 	\|(d u(s)\circ\Ph(s) - dv(s)\circ\Ps(s))   d\Ps(s)\|_{n-1,\al} 
		 	\\
		 	&\le 
		 	2^n\|d u(s)\circ\Ph(s) - dv(s)\circ\Ps(s) \|_{n-1,\al} \|  d\Ps(s)\|_{n-1,\al}
		 	\\
		 	&\le 
		 	C_4\|d u(s)\circ\Ph(s) - dv(s)\circ\Ps(s) \|_{n-1,\al}.
		 \end{align*}
		 By \Cref{comp1}, \Cref{6.2}, and \eqref{Gronwall}, %(in order to apply \Cref{6.2})  
		 \begin{align*}
		 	&\| d u(s)\circ\Ph(s) - dv(s)\circ\Ps(s) \|_{n-1,\al}
		 	\\
		 	&\le
		 	\|(d u(s) - dv(s))\circ\Ph(s) \|_{n-1,\al} + \|d v(s)\circ\Ph(s) - dv(s)\circ\Ps(s) \|_{n-1,\al}
		 	\\
		 	&\le
		 	M \big(
		 	\|d u(s)- dv(s)\|_{n-1,\al} (1+\|\ph(s)\|_{n-1,\al})^n +  \|v(s)\|_{n,\be} 
		 	\|\ph(s) - \ps(s)\|^{\be-\al}_{n-1,\al} \big)
		 	\\
		 	&\le 
		 	C_5\big(\|u(s)- v(s)\|_{n,\be} +  \|v(s)\|_{n,\be} 
		 			 	\|\ph(s) - \ps(s)\|^{\be-\al}_{n-1,\al}\big). 
		 \end{align*}
		 Together with \eqref{Gronwall} this gives 
		 \begin{align*}
		 	\MoveEqLeft
		 	\int_0^t \|d u(s)\circ\Ph(s) - dv(s)\circ\Ps(s) \|_{n-1,\al}\, ds
		 	\\
		 	&
		 	\le
		 	C_6 \big(\|u - v\|_{L^1(I,C^{n,\be}_b)} +   
		 	 \|u - v\|^{\be-\al}_{L^1(I,C^{n,\be}_b)}   \big)
		 	 \\
		 	&
		 	\le
		 	C_7   
		 	 \|u - v\|^{\be-\al}_{L^1(I,C^{n,\be}_b)},
		 \end{align*}
		 provided that $\|u - v\|_{L^1(I,C^{n,\be}_b)} \le 1$.
		 Consequently,
		 \begin{align*}
		 	\|d\ph(t) - d\ps(t)\|_{n-1,\al}  
			&\le C_3 \int_0^t  
		 	\|u(s)\|_{n,\be}  \|d\ph(s) -  d\ps(s)\|_{n-1,\al} \,ds 
			\\
			&\quad +
			C_7   
		 	 \|u - v\|^{\be-\al}_{L^1(I,C^{n,\be}_b)}.
		 \end{align*}
		 Then Gronwall's inequality implies 
		 \begin{align*}
		 	\|d\ph(t) - d\ps(t)\|_{n-1,\al} 
		 	&\le C_7   
		 	 \|u - v\|^{\be-\al}_{L^1(I,C^{n,\be}_b)} \exp (C_3  \|u\|_{L^1(I,C^{n,\be}_b)})
		 	 \\
		 	&\le C_8   
		 	 \|u - v\|^{\be-\al}_{L^1(I,C^{n,\be}_b)},
		 \end{align*}
		 for all $t \in I$, and the assertion follows.	
	\end{proof}

%---------------------------------------------------------------------------------------------------
	\subsection{Flows of strong time-dependent \texorpdfstring{$C^{n,\beta-}_0$}{Cnbetapm}-vector fields}
%---------------------------------------------------------------------------------------------------
\label{ssec:tdepvf}

Let $n \in \N_{\ge 1}$.

By a \emph{strong time-dependent $C^{n,\beta-}_0$-vector field}, for $\be \in (0,1]$, 
we mean a 
function $u : I \to C^{n,\beta-}_0(\R^d,\R^d)$ 
such that $u \in L^1(I,C^{n,\al}_0(\R^d,\R^d))$ for all $\al < \be$. We denote the space of all strong 
time-dependent 
$C^{n,\beta-}_0$-vector fields by $L^1(I,C^{n,\be-}_0(\R^d,\R^d))$ and equip it with the fundamental 
system of seminorms $\{\|\cdot\|_{L^1(I,C^{n,\al}_b)} :   \al < \be\}$.

Clearly,
for every strong time-dependent $C^{n,\be-}_0$-vector field $u$, $u^\wedge$  
is a pointwise time-dependent $C^{n,\be-}_0$-vector field (as defined in \Cref{ssec:pointwisepm}); 
the converse is not true in general.

By \Cref{odebounded}, the flow map $L^1(I,C^{n,\be-}_0(\R^d,\R^d)) \to C(I,\cD^{n,\be-}(\R^d))$ is bounded, 
for all $n \in \N_{\ge 1}$, $\be \in (0,1]$.

\begin{theorem}
	Let $n \in \N_{\ge 2}$. For $\be \in (0,1]$,
	the flow map $L^1(I,C^{n,\be-}_0(\R^d,\R^d)) \to C(I,\cD^{n,\be-}(\R^d))$, $u \mapsto \Ph$, 
	is continuous and $C^{0,\om}$, for any slowly vanishing modulus of continuity $\om$. 
\end{theorem}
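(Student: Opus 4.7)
The plan is to reduce to \Cref{thm:flowcontinuous} by unwinding the projective limit structure on both sides. First I would note that $L^1(I,C^{n,\be-}_0(\R^d,\R^d))$ is Fr\'echet with topology generated by the seminorms $\|\cdot\|_{L^1(I,C^{n,\al}_b)}$, $\al<\be$, while $C(I,\cD^{n,\be-}(\R^d))$, realised via the global chart $\Ph\mapsto\Ph-\Id$, sits inside the Fr\'echet space $C(I,C^{n,\be-}_0(\R^d,\R^d))$ whose topology is given by $\sup_{t\in I}\|\cdot\|_{n,\al}$, $\al<\be$. Thus it suffices to verify, for every fixed $\al_0\in(0,\be)$, that the flow map is continuous, respectively $C^{0,\om}$, into $C(I,\cD^{n,\al_0}(\R^d))$.

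For continuity I would fix $\al_0<\be$, choose $\al_1\in(\al_0,\be)$, and invoke \Cref{thm:flowcontinuous} to get continuity (in fact $C^{0,\al_1-\al_0}$-regularity) of the flow map $L^1(I,C^{n,\al_1}_0)\to C(I,\cD^{n,\al_0})$. Precomposition with the continuous inclusion $L^1(I,C^{n,\be-}_0)\hookrightarrow L^1(I,C^{n,\al_1}_0)$ (which is continuous by \Cref{inclusion}) then yields continuity of the flow map into $C(I,\cD^{n,\al_0})$, and since $\al_0<\be$ was arbitrary the statement follows.

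For the $C^{0,\om}$-claim, let $\eta:\R\to L^1(I,C^{n,\be-}_0)$ be a $C^\infty$-curve and $J\subset\R$ a bounded interval; I must show that $\{(\Ph_{\eta(t)}-\Ph_{\eta(s)})/\om(|t-s|):s\ne t\in J\}$ is bounded in $C(I,C^{n,\al_0}_0)$ for every $\al_0<\be$. Choosing $\al_1\in(\al_0,\be)$, the curve $\eta$ is also smooth into $L^1(I,C^{n,\al_1}_0)$, so by \Cref{thm:flowcontinuous} the curve $t\mapsto\Ph_{\eta(t)}$ is $C^{0,\al_1-\al_0}$ into $C(I,\cD^{n,\al_0}(\R^d))$, i.e.\ the analogous quotient with $|t-s|^{\al_1-\al_0}$ in the denominator is bounded on $J$. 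The remaining step is to dominate $|t-s|^{\al_1-\al_0}$ by $\om(|t-s|)$ on $J$: by the slowly vanishing assumption, $\liminf_{\tau\downarrow 0}\om(\tau)/\tau^{\al_1-\al_0}>0$, so $\tau\mapsto \tau^{\al_1-\al_0}/\om(\tau)$ is bounded near $0$; since $\om$ is increasing and strictly positive on $(0,\infty)$, this quotient is bounded on $(0,\on{diam}(J)]$ as well. This yields the required bound, exactly as in the proof of \Cref{thm:C0omegaLie}. I do not foresee a serious obstacle; the argument is a direct projective-limit assembly of \Cref{thm:flowcontinuous} combined with the slow-vanishing property of $\om$.
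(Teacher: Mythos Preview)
Your argument is correct and is exactly the approach the paper takes: the paper's proof is the single line ``This is immediate from \Cref{thm:flowcontinuous} and from the estimates in its proof,'' and you have merely spelled out the projective-limit bookkeeping and the passage from $|t-s|^{\al_1-\al_0}$ to $\om(|t-s|)$ (the latter verbatim from the proof of \Cref{thm:C0omegaLie}) that this one-liner leaves implicit.
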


\begin{proof}
	This is immediate from \Cref{thm:flowcontinuous} and from the estimates in its proof.
\end{proof}

\begin{remark}
	One could define \emph{strong time-dependent $C^{n,\beta+}_0$-vector fields}, for $\be \in [0,1)$, 
	to be the elements of the (LB)-space $\bigcup_{\al \in (\be,1)} L^1(I,C^{n,\al}_0(\R^d,\R^d))$.
	Then, by \Cref{thm:Trouve1}, we have a flow map
	\[
		\bigcup_{\al \in (\be,1)} L^1(I,C^{n,\al}_0(\R^d,\R^d)) \to \bigcup_{\al \in (\be,1)} C(I,C^{n,\al}_0(\R^d,\R^d)) 
		\subseteq 
		C(I,C^{n,\be+}_0(\R^d,\R^d)). 
	\]
	Note that this is not clear for $u \in L^1(I,C^{n,\be+}_0(\R^d,\R^d))$, since such $u$ may not factor to some step 
	in the inductive limit defining $C^{n,\be+}_0(\R^d,\R^d)$.  
	Is this flow map $C^{0,\om}$, for slowly vanishing moduli of continuity $\om$?
	This would follow from \Cref{thm:flowcontinuous} if the (LB)-space $\bigcup_{\al \in (\be,1)} L^1(I,C^{n,\al}_0(\R^d,\R^d))$ were regular. 
\end{remark}

\appendix

%-----------------------------------------------------------------------------------------------------------------------
\section{Proofs for \texorpdfstring{\Cref{composition}}{Section 2.4}} \label{appendix}
%-----------------------------------------------------------------------------------------------------------------------

\Cref{bilinear} is precisely \cite[4.2]{LlaveObaya99}.

\begin{proof}[Proof of \Cref{comp1}]
	We prove the assertion by induction on $m$. 
	First observe that $d(g\circ(\Id+f))= dg\circ(\Id+f) \cdot (\mathbb{1}+df)= dg\circ(\Id+f) + dg\circ(\Id+f)\cdot df$. We have
			\begin{align} \label{eq:Appendix1}
				\|dg(x+f(x)) -dg(y+f(y))\|_{L_1} &\leq \|dg\|_{0,\alpha}\|x-y+f(x)-f(y)\|^\alpha \notag \\
				&\leq \|g\|_{1,\alpha}(1+\|f\|_{1})^\alpha \|x-y\|^\alpha,
			\end{align}
			and
			\begin{align*}
				\MoveEqLeft 
				\|dg(x+ f(x))\cdot df(x) - dg(y+ f(y))\cdot df(y)\|_{L_1}
				\\
				&\le \|dg(x+ f(x))\cdot df(x) - dg(x+ f(x))\cdot df(y)\|_{L_1}\\
				& \qquad +  \|dg(x+ f(x))\cdot df(y) - dg(y+ f(y))\cdot df(y)\|_{L_1}\\
				&\le  \|dg(x+f(x))\|_{L_1} \|df(x)-df(y)\|_{L_1} \\
				& \qquad + \|dg(x+f(x))-dg(y+f(y))\|_{L_1} \|df(y)\|_{L_1}\\
				&\leq \|g\|_{1,\alpha} \|f\|_{1,\alpha} \|x-y\|^\alpha + \|g\|_{1,\alpha}(1+\|f\|_{1})^\alpha \|x-y\|^\alpha \|f\|_{1,\alpha}.
			\end{align*}
			Thus,
			\begin{equation*}
				\|dg\circ (\text {Id}+ f)\cdot df\|_{0,\alpha} \le 2 \|g\|_{1,\alpha}(1+\|f\|_{1,\alpha})^{1+\alpha},
			\end{equation*}
			and since the same bound is trivially also valid for $\|g\circ(\Id+f)\|_0$, the case $m = 1$ is proved.

			Now assume the statement holds for $m-1$. Then 
			\begin{align*}
				\|d(g\circ(\Id+f))\|_{m-1,\alpha} &\leq \|dg\circ(\Id+f) \|_{m-1,\alpha} + \|dg\circ(\Id+f)\cdot df \|_{m-1,\alpha}.
			\end{align*}
			The inductive assumption implies 
			\begin{equation*}
				\|dg\circ(\Id+f) \|_{m-1,\alpha} \leq M \|dg\|_{m-1,\alpha}(1+\|f\|_{m-1,\alpha})^{m-1+\alpha},
			\end{equation*}
			and using \Cref{bilinear}, we get 
			\begin{align*}
				\|dg\circ(\Id+f) \cdot df \|_{m-1,\alpha}&\leq 2^m \|dg\circ(\Id+f)\|_{m-1,\alpha} \cdot \|df \|_{m-1,\alpha}
			\end{align*}
			which now adds up to  \eqref{eq:comp1}.
\end{proof}

\begin{proof}[Proof of \Cref{6.2}]
	We proceed by induction on $m$.  First observe that we have
			\begin{equation*}
				\|g_\star(f_1)-g_\star(f_2)\|_{0}\leq \|g\|_{1}\|f_1-f_2\|_{0}.
			\end{equation*}
			Moreover, by \Cref{bilinear},
			\begin{align*}
				\MoveEqLeft \|d(g_\star(f_1)) - d(g_\star(f_2))\|_{0,\alpha}\\
				&= \|dg\circ(\Id+f_1)\cdot (\mathbb{1}+df_1) - 	dg\circ(\Id+f_2)\cdot (\mathbb{1}+df_2)\|_{0,\alpha}\\
				&= 	\|dg\circ(\Id+f_1)\cdot (df_1-df_2)  	-(dg\circ(\Id+f_2) - dg\circ(\Id+f_1))\cdot (\mathbb{1}+df_2)\|_{0,\alpha}\\
				&\leq 2\|dg\circ(\Id+f_1)\|_{0,\alpha}\|df_1-df_2\|_{0,\alpha}	\\
				&\quad + \|dg\circ(\Id+f_2) - dg\circ(\Id+f_1)\|_{0,\alpha}(1+2\|df_2\|_{0,\alpha}).
			\end{align*}
			As an intermediate step we use \Cref{exp} and \eqref{eq:Appendix1} to estimate
			\begin{align*}
				\MoveEqLeft \|dg\circ(\Id+f_2) - dg\circ(\Id+f_1)\|_{0,\alpha}\\
				&\leq \|dg\circ(\Id+f_2) - dg\circ(\Id+f_1)\|_{0}^{\frac{\beta-\alpha}{\beta}}\|dg\circ(\Id+f_2) - dg\circ(\Id+f_1)\|_{0,\beta}^{\frac{\alpha}{\beta}}\\
				&\leq (\|g\|_{1,\beta}\|f_1-f_2\|_{0}^\beta)^\frac{\beta-\alpha}{\beta} (\|dg\circ(\Id+f_1)\|_{0,\beta} + \|dg\circ(\Id+f_2)\|_{0,\beta})^\frac{\alpha}{\beta}\\
				&\leq (\|g\|_{1,\beta}\|f_1-f_2\|_{0}^\beta)^\frac{\beta-\alpha}{\beta} (\|g\|_{1,\beta} ((1+\|f_1\|_{1})^\beta +  (1+\|f_2\|_{1})^\beta))^\frac{\alpha}{\beta}\\
				&\leq \|g\|_{1,\beta} (2+\|f_1\|_{1}+ \|f_2\|_{1})\|f_1-f_2\|_{0}^{\beta - \alpha}.
			\end{align*}
			Consequently, if $R>0$, $f_1,f_2 \in B^{1,\al}(f_0,R)$, and hence 
			$\|f_1-f_2\|_{1,\alpha} \leq (1+2R) \|f_1-f_2\|_{1,\alpha}^{\beta -\alpha}$, then
			\begin{align*}
				\|d(g_\star(f_1)) - d(g_\star(f_2))\|_{0,\alpha}
				\leq M \|g\|_{1,\beta}\|f_1-f_2\|_{1,\alpha}^{\beta -\alpha}, 
			\end{align*}	
			where $M = M(\|f_0\|_{1,\al}, R)$, and hence 
			\begin{equation*}
				\|g_\star(f_1)-g_\star(f_2)\|_{1,\alpha} \leq M\|g\|_{1,\beta}\|f_1-f_2\|_{1,\alpha}^{\beta -\alpha}
			\end{equation*}
			which proves the case $m = 1$.

			Now assume we have already proven the desired result for $m-1$. Then, as in the case $m = 1$, we have
			\begin{align*}
				\MoveEqLeft \|d(g_\star(f_1)) - d(g_\star(f_2))\|_{m-1,\alpha}\\
				&\leq 2^m\|dg\circ(\Id+f_1)\|_{m-1,\alpha}\|df_1-df_2\|_{m-1,\alpha}	\\
				&\quad + \|dg\circ(\Id+f_2) - dg\circ(\Id+f_1)\|_{m-1,\alpha}(1+2^m\|df_2\|_{m-1,\alpha}).
			\end{align*}
			By the inductive assumption, 
			\begin{align*}
				\|dg\circ(\Id+f_2) - dg\circ(\Id+f_1)\|_{m-1,\alpha} 
				\leq M\|g\|_{m,\beta} \|f_1-f_2\|_{m-1,\alpha}^{\beta -\alpha}.
			\end{align*}
			Together with \Cref{comp1}, which makes it possible to extract $\|g\|_{m,\beta}$ from the term $\|dg\circ(\Id+f_1)\|_{m-1,\alpha}$, and using that $\|f_1-f_2\|_{m,\al} \leq (1+2R)\|f_1-f_2\|_{m,\al}^{\be-\al}$ for $f_1,f_2 \in B^{m,\al}(f_0,R)$ , we may conclude 
			\eqref{eq:6.2}.	
\end{proof}

\begin{proof}[Proof of \Cref{jcont}]
		This follows easily from \Cref{comp1}, \Cref{6.2}, and
		\[
		g_1 \o (\Id+f_1) - g_2 \o (\Id+f_2) = f_1^\star (g_1  - g_2) + (g_2)_\star (f_1) - (g_2)_\star (f_2). \qedhere
		\]
\end{proof}

\begin{proof}[Proof of \Cref{6.7}]
		By \Cref{6.2}, the mapping $(dg)_\star: C^{m,\alpha}_b(\mathbb{R}^d,\mathbb{R}^d) \rightarrow C^{m,\alpha}_b(\mathbb{R}^d,L(\mathbb{R}^d,\mathbb{R}^d))$, $\phi \mapsto dg\circ(\Id+\phi)$ is continuous. Consider the mapping 
		\begin{align*}
			l:&~ C^{m,\alpha}_b(\mathbb{R}^d, L(\mathbb{R}^d,\mathbb{R}^d)) \rightarrow L(C^{m,\alpha}_b(\mathbb{R}^d,\mathbb{R}^d), C^{m,\alpha}_b(\mathbb{R}^d,\mathbb{R}^d))\\
			&u \mapsto l(u)(\eta):=(x\mapsto u(x)(\eta(x)))
		\end{align*}
		which is continuous and linear. 
		We claim that $d(g_\star)$ exists and satisfies $d(g_\star)=l\circ (dg)_\star$. This implies the proposition.

		First note that for $\psi_0 \in C^{m,\alpha}_b(\R^d,\R^d)$ and $\phi \in C^{m,\alpha}_b(\mathbb{R}^d,\mathbb{R}^d)$, 
		\begin{equation*}
			(l\circ (dg)_\star)(\psi_0)(\phi)(x) =dg \circ(\Id+\psi_0)(x) \cdot \phi(x),
		\end{equation*}
		where $\cdot$ denotes the action of the linear map $dg \circ(\Id+\psi_0)(x) \in L(\mathbb{R}^d, \mathbb{R}^d)$ to the vector $\phi(x) \in \R^d$.

		Take $\phi \in C^{m,\alpha}_b(\mathbb{R}^d,\mathbb{R}^d)$ with $\|\phi\|_{m,\alpha} \leq 1$.
		By \Cref{6.2} (applied to $dg$), for $\ps_1 \in B^{m,\al}(\ps_0,1)$,  
		\begin{equation} \label{eq:6.2.1}
				\|(dg)_\star(\ps_0)-(dg)_\star(\ps_1)\|_{m,\al} \le M\|g\|_{m+1,\beta} \|\ps_0-\ps_1\|_{m,\al}^{\beta - \al},
		\end{equation}
		For $\varepsilon < 1$ we have $\psi_0 + \varepsilon \phi \in B^{m,\al}(\psi_0,1)$ for all 
		$\phi \in B^{m,\al}(0,1)$. Now, by \Cref{bilinear} and \eqref{eq:6.2.1},
		\begin{align*}
			\MoveEqLeft
			\frac{1}{\varepsilon} \|g_\star(\psi_0 + \varepsilon \phi) -g_\star(\psi_0) 
			- (l\circ (dg)_\star)(\psi_0)(\varepsilon \phi)\|_{m,\alpha}\\
			&= 	\frac{1}{\varepsilon} \|g\circ(\Id+\psi_0 + \varepsilon \phi) -g\circ(\Id + \psi_0) - \varepsilon (dg \circ (\Id + \psi_0))\cdot \phi\|_{m,\alpha}\\
			&= \Big\| \int_0^1 (dg\circ(\Id+\psi_0 + s\varepsilon \phi) -dg\circ(\Id + \psi_0)   )\cdot \phi \, ds \Big\|_{m,\alpha}\\
			&\leq \int_0^1 2\| dg\circ(\Id+\psi_0 + s\varepsilon \phi) -dg\circ(\Id + \psi_0)  \|_{m,\alpha}\|\phi\|_{m,\alpha} \, ds\\
			& \leq \int_0^1 2 M\|g\|_{m+1,\beta} \|\varepsilon s \phi\|_{m,\alpha}^{\beta - \alpha} \, ds\\
			&\leq 2M \|g\|_{m+1,\beta} \varepsilon^{\beta - \alpha}
		\end{align*}
		which tends to $0$ uniformly in $\phi \in B^{m,\al}(0,1)$ as $\varepsilon \rightarrow 0$. The claim is proved.
	\end{proof}

\begin{proof}[Proof of \Cref{Lipschitz}]
	Let  $\gamma(s):= (1-s)f_1 + s f_2$ for $s \in [0,1]$.
	Using \Cref{ftc} and \Cref{6.7}, we get 
	\begin{align*}
	g_\star(f_1) - g_\star(f_2) &= \int_0^1 \frac{d}{ds}(g_\star\circ \gamma) (s)\, ds
	=\int_0^1 d(g_\star) (\gamma(s)) \cdot  \gamma'(s) \,ds\\
	&=\int_0^1 dg\circ(\Id + \gamma(s)) \cdot (f_2-f_1) \,ds.
	\end{align*}
	Thus, by \Cref{bilinear} and \Cref{comp1}, 
	\begin{align*}
	\MoveEqLeft
	\|g_\star( f_1) - g_\star(f_2)\|_{m,\alpha} 
	\\
	&\leq \int_0^1 \|dg\circ(\Id + \gamma(s)) \cdot (f_2-f_1)\|_{m,\alpha}\,ds
	\\& 
	\leq \int_0^1 M \|dg\|_{m,\beta} (1 + \|\gamma(s)\|_{m,\alpha})^{m+1}  \|f_2-f_1\|_{m,\alpha}\, ds\\
	&\leq M \|g\|_{m+1,\beta} (1 + \max_{i=1,2}\|f_i\|_{m,\alpha})^{m+1}  \|f_2-f_1\|_{m,\alpha}.
	\qedhere
	\end{align*}
\end{proof}

\def\cprime{$'$}
\providecommand{\bysame}{\leavevmode\hbox to3em{\hrulefill}\thinspace}
\providecommand{\MR}{\relax\ifhmode\unskip\space\fi MR }
% \MRhref is called by the amsart/book/proc definition of \MR.
\providecommand{\MRhref}[2]{%
  \href{http://www.ams.org/mathscinet-getitem?mr=#1}{#2}
}
\providecommand{\href}[2]{#2}

	%\bibliography{../../references/biblio}

\begin{thebibliography}{10}

\bibitem{AulbachWanner96}
B.~Aulbach and T.~Wanner, \emph{Integral manifolds for {C}arath{\'e}odory type
  differential equations in {B}anach spaces}, Six lectures on dynamical systems
  ({A}ugsburg, 1994), World Sci. Publ., River Edge, NJ, 1996, pp.~45--119.
  

\bibitem{BruverisVialard14}
M.~Bruveris and F.-X. Vialard, \emph{On completeness of groups of
  diffeomorphisms}, J. Eur. Math. Soc. (JEMS) \textbf{19} (2017), no.~5, 1507--1544.

\bibitem{LlaveObaya99}
R.~de~la Llave and R.~Obaya, \emph{Regularity of the composition operator in
  spaces of {H}{\"o}lder functions}, Discrete Contin. Dynam. Systems \textbf{5}
  (1999), no.~1, 157--184. 

\bibitem{DiestelUhl77}
J.~Diestel and J.~J. Uhl, Jr., \emph{Vector measures}, American Mathematical
  Society, Providence, R.I., 1977, With a foreword by B. J. Pettis,
  Mathematical Surveys, No. 15. 

\bibitem{Faure91}
C.-A. Faure, \emph{Th\'eorie de la diff\'erentiation dans les espaces
  convenables}, Ph.D. thesis, Universit\'e de Gen\'eve, 1991.

\bibitem{FaureFrolicher89}
C.-A. Faure and A.~Fr{{\"o}}licher, \emph{H{\"o}lder differentiable maps and
  their function spaces}, Categorical topology and its relation to analysis,
  algebra and combinatorics ({P}rague, 1988), World Sci. Publ., Teaneck, NJ,
  1989, pp.~135--142. 

\bibitem{Frolicher81}
A.~Fr{{\"o}}licher, \emph{Applications lisses entre espaces et vari{\'e}t{\'e}s
  de {F}r{\'e}chet}, C. R. Acad. Sci. Paris S{\'e}r. I Math. \textbf{293}
  (1981), no.~2, 125--127. 

\bibitem{FroelicherGisinKriegl83}
A.~Fr{\"o}licher, B.~Gisin, and A.~Kriegl, \emph{General differentiation
  theory}, Category theoretic methods in geometry ({A}arhus, 1983), Various
  Publ. Ser. (Aarhus), vol.~35, Aarhus Univ., Aarhus, 1983, pp.~125--153.

\bibitem{FK88}
A.~Fr{\"o}licher and A.~Kriegl, \emph{Linear spaces and differentiation
  theory}, Pure and Applied Mathematics (New York), John Wiley \& Sons Ltd.,
  Chichester, 1988, A Wiley-Interscience Publication.

\bibitem{Glockner16}
H.~Gl\"ockner, \emph{Measurable regularity properties of infinite-dimensional
  {L}ie groups},  (2016), ar{X}iv:1601.02568.

\bibitem{InciKappelerTopalov13}
H.~Inci, T.~Kappeler, and P.~Topalov, \emph{On the regularity of the
  composition of diffeomorphisms}, Mem. Amer. Math. Soc. \textbf{226} (2013),
  no.~1062, vi+60. 

\bibitem{Kriegl82}
A.~Kriegl, \emph{Die richtigen {R}\"aume f\"ur {A}nalysis im
  {U}nendlich-{D}imensionalen}, Monatsh. Math. \textbf{94} (1982), no.~2,
  109--124.

\bibitem{Kriegl83}
\bysame, \emph{Eine kartesisch abgeschlossene {K}ategorie glatter {A}bbildungen
  zwischen beliebigen lokalkonvexen {V}ektorr\"aumen}, Monatsh. Math.
  \textbf{95} (1983), no.~4, 287--309.

\bibitem{KrieglMichor90}
A.~Kriegl and P.~W. Michor, \emph{The convenient setting for real analytic
  mappings}, Acta Math. \textbf{165} (1990), no.~1-2, 105--159.

\bibitem{KM97}
\bysame, \emph{The convenient setting of global analysis}, Mathematical Surveys
  and Monographs, vol.~53, American Mathematical Society, Providence, RI, 1997,
  \url{http://www.ams.org/online\_bks/surv53/}.

\bibitem{KMRc}
A.~Kriegl, P.~W. Michor, and A.~Rainer, \emph{The convenient setting for
  non-quasianalytic {D}enjoy--{C}arleman differentiable mappings}, J. Funct.
  Anal. \textbf{256} (2009), 3510--3544.

\bibitem{KMRq}
\bysame, \emph{The convenient setting for quasianalytic {D}enjoy--{C}arleman
  differentiable mappings}, J. Funct. Anal. \textbf{261} (2011), 1799--1834.

\bibitem{KMR}
\bysame, \emph{{M}any parameter {H}\"older perturbation of unbounded
  operators}, Math. Ann. \textbf{353} (2012), 519--522.

\bibitem{KMRu}
\bysame, \emph{The convenient setting for {D}enjoy--{C}arleman differentiable
  mappings of {B}eurling and {R}oumieu type}, Rev. Mat. Complut. \textbf{28}
  (2015), no.~3, 549--597. 

\bibitem{KrieglMichorRainer14a}
\bysame, \emph{An exotic zoo of diffeomorphism groups on {$\mathbb{R}^n$}},
  Ann. Global Anal. Geom. \textbf{47} (2015), no.~2, 179--222. 

\bibitem{KrieglNel85}
A.~Kriegl and L.~D. Nel, \emph{A convenient setting for holomorphy}, Cahiers
  Topologie G\'eom. Diff\'erentielle Cat\'eg. \textbf{26} (1985), no.~3,
  273--309.

\bibitem{MichorMumford13}
P.~W. Michor and D.~Mumford, \emph{A zoo of diffeomorphism groups on
  {$\mathbb{R}^n$}}, Ann. Global Anal. Geom. \textbf{44} (2013), no.~4,
  529--540. 

\bibitem{Neus78}
H.~Neus, \emph{\"{U}ber die {R}egularit\"atsbegriffe induktiver lokalkonvexer
  {S}equenzen}, Manuscripta Math. \textbf{25} (1978), no.~2, 135--145.

\bibitem{Palais59}
R.~S. Palais, \emph{Natural operations on differential forms}, Trans. Amer.
  Math. Soc. \textbf{92} (1959), 125--141. 

\bibitem{Retakh70}
V.S. Retakh, \emph{{Subspaces of a countable inductive limit.}}, Sov. Math.,
  Dokl. \textbf{11} (1970), 1384--1386 (English).

\bibitem{Schindl14a}
G.~Schindl, \emph{The convenient setting for ultradifferentiable mappings of
  {B}eurling- and {R}oumieu-type defined by a weight matrix}, Bull. Belg. Math.
  Soc. Simon Stevin \textbf{22} (2015), no.~3, 471--510. 

\bibitem{Trouve95}
A.~Trouv{\'e}, \emph{An infinite dimensional group approach for physics based
  models in pattern recognition}, {\tt
  http://cis.jhu.edu/publications/papers\_in\_database/alain/trouve1995.pdf},
  1995.

\bibitem{Younes10}
L.~Younes, \emph{Shapes and diffeomorphisms}, Applied Mathematical Sciences,
  vol. 171, Springer-Verlag, Berlin, 2010. 

\end{thebibliography}
    %\bibliographystyle{amsplain}

\end{document}